\def\Ran{\operatorname{\rm Ran}}
\def\Re{\operatorname{\rm Re}}
\def\Im{\operatorname{\rm Im}}
\def\span{\operatorname{\rm span}}
\def\Dom{\operatorname{\rm Dom}}
\def\supp{\operatorname{\rm supp}}
\def\subsetarrow{\subset\kern-9.0pt\raise-2.95pt\hbox{$\to$}}
\newtheorem{Thm}{Theorem}
\newtheorem{Cor}{Corollary}
\newtheorem{Lem}{Lemma}
\newtheorem{Prop}{Proposition}
\newtheorem{Rem}{Remark}
\newtheorem{Ex}{Example}
\title {Persistence of embedded eigenvalues}
\author{Shmuel Agmon, Ira Herbst and Sara Maad Sasane}
\begin{document}
\maketitle \noindent
\begin{abstract}
   \noindent We consider conditions under which an embedded eigenvalue of a self-adjoint
   operator remains embedded under small perturbations.  In the case of a
   simple eigenvalue embedded in continuous spectrum of multiplicity $m < \infty$
   we show that in favorable situations, the set of small perturbations  of a
   suitable Banach space which do not remove the eigenvalue form a smooth
   submanifold of co-dimension $m$. We also have results regarding the cases when the
   eigenvalue  is degenerate or when the multiplicity of the continuous spectrum is infinite.
\end{abstract}

\section{Introduction}\noindent
An eigenvalue in the continuous spectrum of an operator typically disappears under small perturbations. Or if there is enough analyticity for some sort of analytic continuation of the resolvent, it typically becomes a resonance, that is a pole in the analytic continuation of certain matrix elements of the resolvent.

The simplest mechanism which has been used to prove that embedded eigenvalues disappear under perturbation is Fermi's Golden Rule, which in physics gives the lifetime of the decaying unperturbed state. See for example \cite{PS92a, PS85, PS92b, AHS89}. See also the earlier work
\cite{yC82,yC83}, which relies on analytic continuation. For the basic mathematical ideas behind Fermi's Golden Rule, see \cite{bS73}.

On the contrary, in this article we are interested in the structure of the set of perturbations which do not remove an embedded eigenvalue. No analyticity assumptions are made to allow the analytic continuation of the resolvent. A simple example of the kind of theorem we are after is given in \cite{CHM02}: Suppose $V$ is real and in $L^1(\mathbb R)$. Consider the self-adjoint operator in $L^2(\mathbb R)$ given
by
\begin{equation*}
   H = -\frac{d^2}{dx^2}+ \lambda \frac{\sin(kx)}{x}+ V(x),
\end{equation*}
where $\lambda> k>0$. Then the set of such $V$ for which $H$ has an embedded eigenvalue is a smooth codimension $1$ submanifold of real $L^1(\mathbb R)$. Such a global result is difficult to obtain in more general cases. We restrict ourselves to small bounded perturbations of a given operator, but the methods can be extended to small $H$-bounded perturbations.

In Sections \ref{S:main} to \ref{S:finmult}, we consider a simple eigenvalue embedded in continuous spectrum of multiplicity $m<\infty$. Under favorable assumptions including the smoothness of the boundary values of the resolvent (after the pole term corresponding to the embedded eigenvalue has been
removed) we show that small perturbations which do not remove the eigenvalue form a smooth submanifold (of appropriate Banach spaces) of codimension $m$. See Theorem \ref{T:main}. In Section \ref{S:applications}, we give two applications of this theorem.

In Section \ref{S:degenerate}, a smooth manifold of perturbations of codimension $m+n-1$ is shown not to remove a degenerate eigenvalue of multiplicity $n$ embedded in continuous spectrum of multiplicity $m$. The set of small perturbations which do not remove the degenerate eigenvalue is a much larger set, but its structure is not known.

In Section \ref{S:infinmult} we give a weak theorem, but one which covers a very general class of operators of the form $-\Delta + V$, where $V$ is a real function on $\mathbb R^n$. This theorem shows that the set of small local perturbations which do not remove a {(simple or degenerate)} eigenvalue is quite large. Of course if $n\ge 2$, the continuous spectrum will in general have infinite multiplicity.

See \cite{DMS10} for another approach to the problem where the continuous spectrum of the operators involved has infinite multiplicity. In \cite{DMS10}, the structure of the set of local perturbations which do not remove an embedded simple eigenvalue is determined for a specific example.

\section{Assumptions and result in the case of a simple eigenvalue and finite multiplicity of the continuous spectrum}\label{S:main}\noindent
Let $\mathcal H$ be a Hilbert space, and let $C:\mathcal H\to \mathcal H$ be an antiunitary involution, i.e. a conjugate-linear
mapping satisfying $C^2 = I$ and $\langle Cf,Cg\rangle=\overline{\langle f,g\rangle}$.
An element $f\in \mathcal H$ is said to be {\em real} if $C f = f$, and
we say that an operator $H$  on $\mathcal H$ is {\em real} if $H C = C H$.
We assume that:
\begin{enumerate}
   \item [(H1)] $H$ is a real, self-adjoint operator acting in $\mathcal H$.
\end{enumerate}
We introduce a scale of Hilbert spaces $\mathcal H_s$ for $s\in \mathbb R$ such that $\mathcal H_0 = \mathcal H$, the dual space of $\mathcal H_s$ is
$\mathcal H_{-s}$ (using the inner product of $\mathcal H$) and $\mathcal H_s$ is continuously embedded in $\mathcal H_t$ for $s\ge t$.
We also assume that $\mathcal H_s$
is dense in $\mathcal H_t$ if $s>t$. If $s\ge 0$, then $\mathcal H_s\subset \mathcal H\subset \mathcal H_{-s}$. We denote the inner product of
$\mathcal H$ and also the duality pairing of $\mathcal H_s$ with $\mathcal H_{-s}$ by $\langle \cdot,\cdot\rangle$. For notational simplicity we assume that for $s>0,\, \|f\|_{\mathcal H} \le \|f\|_{\mathcal H_s}$.  We assume that $C:\mathcal H_s\to \mathcal H_s$ is
bounded for
every $s\in \mathbb R$. It then follows that

\begin{equation*}
   \langle C f,C g\rangle = \overline{\langle f,g\rangle}
\end{equation*}
for $f\in \mathcal H_{-s}$ and $g\in \mathcal H_s$.

Let $\sigma_{pp}(H)$ be the pure point spectrum of $H$, i.e. the set of eigenvalues of $H$.
\begin{enumerate}
   \item [(H2)] $H$ has an eigenvalue at $\lambda = \lambda_0$ of finite multiplicity which is embedded in the continuous spectrum and isolated in $\sigma_{pp}(H)$,
   and the corresponding eigenspace is a subspace of $\cap_{s\ge 0}\mathcal H_s$.
\end{enumerate}
The condition that the eigenspace is a subspace of $\cap_{s\ge0}\mathcal H_s$ can be relaxed, and it is enough that
it is a subspace of $\mathcal H_{s_*}$ for
 a certain $s_*>0$. In examples, the condition can be checked by using methods from \cite{FMS10,FH82, CGH10, MW10}.

We denote by $P_0$ the orthogonal projection in $\mathcal H$ onto the eigenspace of $H$ corresponding to the eigenvalue $\lambda_0$. Let $\overline H
:= H + P_0$. If $H$ satisfies conditions (H1) and (H2) then the continuous spectra of $H$ and $\overline
H$ coincide, and $\overline H$ does not have any eigenvalues in a neighborhood of $\lambda_0$ (see Proposition \ref{P:H bar H}).

\begin{enumerate}
   \item [$\text{(H3)}_k$] There exist $k\ge 0$ and $s_1 \ge 0$ and  such that for any $s>s_1$ there is a $\delta_1>0$
   such that the norm limits  $\lim _{\epsilon \downarrow 0} (\overline H - \lambda \pm i\epsilon)^{-1} = (\overline H - \lambda \pm i0)^{-1}$ \text {exist in} \,$\mathcal  L(\mathcal H_s,\mathcal H_{-s})$ and are $C^{k}$
   in $\lambda$ in the norm topology for
   $\lambda \in (\lambda_0-\delta_1,\lambda_0+\delta_1)$.
\end{enumerate}
In examples, $\text{(H3)}_k$ can be verified using methods from \cite{eM81, JMP84}, as is done in Example \ref{Ex:line} and \ref{Ex:cylinder} of this paper. $\text{(H3)}_0$ is called the limiting absorption principle for $\overline H$.
Suppose that $\text{(H3)}_k$ holds.  We will consider perturbations $W$ in the space $X_s$, where $X_s$ is a real Banach space
whose elements are bounded self-adjoint operators on $\mathcal H$ and
such that
\begin{equation*}
   X_s\subset\{W\in \mathcal L(\mathcal H_{-s},\mathcal H_{s});\; W \text{ is real and self-adjoint on }\mathcal H \},
\end{equation*}
 where $s>s_1$ and the inclusion is continuous.

If $H$ satisfies (H1), (H2), and $\text{(H3)}_0$, we introduce the notation
\begin{equation*}
   \delta(\overline H - \lambda) := \frac{1}{2\pi i} \left((\overline H - \lambda - i0)^{-1} - (\overline H - \lambda +
   i0)^{-1}\right).
\end{equation*}
Note that if $\lambda\ne \lambda_0$ but $|\lambda-\lambda_0|$ is small, then $\delta(\overline H - \lambda) = \delta(H - \lambda)$.
The multiplicity of the continuous spectrum of $H$ at $\lambda$  is by definition the dimension of $\Ran \delta(\overline H - \lambda)\subset \mathcal H_{-s}$.  Using the density of $\mathcal H_s $ in $ \mathcal H_t$ for $s> t$ it is easy to show that the multiplicity is independent of $s$ for $s > s_1$.
\begin{enumerate}
   \item [(H4)] The multiplicity of the continuous spectrum  of $H$ in $(\lambda_0-\delta_1,\lambda_0+\delta_1)$ is $m<\infty$.
\end{enumerate}
Our last assumption is a condition that the set of perturbations is not too small. We will eventually need one version of this condition (H5) when $\lambda_0$ is a simple eigenvalue of $H$, and a stronger condition (H5') when $\lambda_0$ is a degenerate eigenvalue.

\begin{enumerate}
   \item [(H5)] $\lambda_0$ is a simple eigenvalue and $\varphi_0$ is a corresponding real normalized eigenvector of $H$. The
   complex linear span of $\{\delta(\overline H-\lambda_0)W\varphi_0;\; W\in X_s\}$ is $\Ran \delta(\overline H-\lambda_0)$.
   \item [(H5')] There exists a real vector
   $\psi_1\in \Ran P_0$ such that the complex linear span of $\{W\psi_1;\; W\in X_s\}$ is dense in $\mathcal H_s$.
\end{enumerate}

In Section \ref{S:applications} we give examples of operators for which the assumptions (H1)-(H5) are satisfied.
For $\delta>0$, let
$\mathcal M_{\delta,s}$ be the set
\begin{equation*}
   \mathcal M_{\delta,s} := \{ W\in X_s;\; \text{there exists a }\lambda\in (\lambda_0-\delta,\lambda_0+\delta)\text{ such that
   }\lambda\text{ is an eigenvalue of }
   H + W\}.
\end{equation*}
\begin{Thm}\label{T:main}
   Suppose that $H$ satisfies conditions (H1), (H2), $\text{(H3)}_k$ with $k \ge 1$ and (H4) and (H5). Let $s_1$ and  $m$ be as in assumptions
   $\text{(H3)}_k$ and (H4), and let $s>s_1$ and $\dim X_s \ge m$.
   Then there exist a number
   $\delta>0$ and a neighborhood $\mathcal O$ of $0$ in $X_s$ such that $\mathcal M_{\delta,s}\cap \mathcal O$ is a $C^k$ manifold in $X_s$ of codimension $m$. Moreover, if $W\in\mathcal M_{\delta,s}\cap\mathcal O$, then $H+W$ has exactly $1$ eigenvalue in the interval $(\lambda_0-\delta,\lambda_0+\delta)$, and it is simple.
\end{Thm}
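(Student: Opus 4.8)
The plan is to set up a Lyapunov–Schmidt type reduction. Write $H+W$ and look for an eigenvalue $\lambda$ near $\lambda_0$ with eigenvector $\varphi$ near $\varphi_0$. Normalize by requiring $\langle \varphi,\varphi_0\rangle = 1$ and decompose $\varphi = \varphi_0 + \eta$ with $\eta \perp \varphi_0$. The eigenvalue equation $(H+W-\lambda)\varphi = 0$ splits, using the projection $P_0$ and $Q_0 := I - P_0$, into a ``regular'' equation living in $\Ran Q_0$ and a scalar ``bifurcation'' equation obtained by pairing with $\varphi_0$. For the regular part one inverts $H - \lambda$ on $\Ran Q_0$; since $\overline H = H + P_0$ has no eigenvalue near $\lambda_0$ and satisfies $\text{(H3)}_k$, the operator $Q_0(\overline H - \lambda \mp i0)^{-1}Q_0 = Q_0(H-\lambda)^{-1}Q_0$ is a $C^k$ family in $\mathcal L(\mathcal H_s,\mathcal H_{-s})$. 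Because $W \in X_s$ maps $\mathcal H_{-s} \to \mathcal H_s$ boundedly and is small, the implicit function theorem gives, for $\lambda$ real near $\lambda_0$ and $W$ small, a unique small solution $\eta = \eta(\lambda,W) \in Q_0\mathcal H_s$ of the regular equation, depending on $(\lambda,W)$ in a $C^k$ fashion. (Here one must be a little careful about which boundary value, $+i0$ or $-i0$, is used; reality of $H$, $W$, $\varphi_0$ forces the eigenvector to be real, which is what makes the two choices consistent and the construction well defined.)

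Substituting back, the existence of an eigenvalue near $\lambda_0$ becomes a single real equation $F(\lambda, W) = 0$, where $F$ is the $\varphi_0$-component of the residual and is $C^k$ in $(\lambda, W) \in (\lambda_0 - \delta, \lambda_0 + \delta) \times \mathcal O$. The key structural fact is that $\mathcal M_{\delta,s} \cap \mathcal O$ is the image, under projection to $W$, of $\{(\lambda, W) : F(\lambda, W) = 0\}$, and that for each $W$ the $\lambda$ solving this, if it exists, is unique (giving the ``exactly one, simple eigenvalue'' conclusion). To extract the codimension-$m$ manifold structure one analyzes $F$ more finely. The crucial computation is the behavior near $W = 0$, $\lambda = \lambda_0$: to leading order the imaginary part / jump of $F$ across the real axis is governed by $\langle \delta(\overline H - \lambda_0) W\varphi_0, W\varphi_0\rangle$ and, more importantly, the obstruction to solving is that $W\varphi_0$ must be orthogonal to $\Ran \delta(\overline H - \lambda_0)$, an $m$-dimensional space by (H4). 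Hypothesis (H5) — that $\{\delta(\overline H - \lambda_0)W\varphi_0 : W \in X_s\}$ spans all of $\Ran\delta(\overline H - \lambda_0)$ — is exactly the transversality/surjectivity condition ensuring that the derivative of the relevant ``defect map'' $X_s \to \mathbb C^m$ is onto at $0$.

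Concretely, I would define $G : (\lambda_0-\delta,\lambda_0+\delta) \times \mathcal O \to \mathbb C$, $G(\lambda,W) := \langle (H+W-\lambda)(\varphi_0 + \eta(\lambda,W)), \varphi_0\rangle$, show $G$ is $C^k$ and real-valued (by reality), and show that $W \in \mathcal M_{\delta,s}$ iff $G(\lambda,W) = 0$ for some $\lambda$. One shows $\partial_\lambda G(\lambda_0, 0) \ne 0$ (it equals $-\|\varphi_0\|^2 = -1$ plus corrections), so by the implicit function theorem the set $\{G = 0\}$ near $(\lambda_0,0)$ is a $C^k$ graph $\lambda = \lambda(W)$ over a neighborhood of $0$ in $X_s$; this $\lambda(W)$ is the only candidate eigenvalue, and feeding it back the eigenvector $\varphi_0 + \eta(\lambda(W),W)$ is genuinely real and in $\mathcal H$, hence a true simple eigenvalue — giving the last sentence of the theorem. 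Then $\mathcal M_{\delta,s} \cap \mathcal O = \{W \in \mathcal O : \Phi(W) = 0\}$ where $\Phi(W)$ is the $\mathbb C^m$-valued defect obtained by plugging $\lambda(W)$ into the regular equation and reading off its component in $\Ran\delta(\overline H-\lambda(W))$ (equivalently, $\Phi(W)$ measures whether $\eta$ can be chosen to lie in $\mathcal H$ rather than only in $\mathcal H_{-s}$). Identifying $\mathbb C^m \cong \mathbb R^{2m}$ would give the wrong codimension; the point is that reality cuts the $\mathbb C^m$ down to a real $m$-dimensional target, so $\Phi : \mathcal O \to \mathbb R^m$. Finally one checks $D\Phi(0)$ is surjective onto $\mathbb R^m$: this is precisely (H5), which says the map $W \mapsto \delta(\overline H - \lambda_0)W\varphi_0$ already hits all of the $m$-dimensional $\Ran\delta(\overline H-\lambda_0)$. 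By the submersion theorem (rank theorem for Banach spaces), $\Phi^{-1}(0)$ is a $C^k$ submanifold of $X_s$ of codimension $m$ near $0$.

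The main obstacle I expect is \emph{bookkeeping the reality constraint and the +i0 vs.\ $-i0$ ambiguity correctly} so that the defect map lands in a genuinely $m$-real-dimensional space rather than a $2m$-real-dimensional one — this is what pins the codimension to $m$ rather than $2m$ and is the whole reason (H1) (reality) and the structure of $\delta(\overline H - \lambda)$ matter. A secondary technical point is verifying the requisite $C^k$ (not merely $C^1$) smoothness of $\eta(\lambda,W)$ and of $G$; this needs $\text{(H3)}_k$ with the same $k$, and one has to be careful that composing with the $C^\infty$ (in fact bounded-linear) dependence on $W$ does not cost derivatives, and that the implicit function theorem is applied in a form that propagates $C^k$ regularity. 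Everything else — smallness estimates making the regular equation a contraction, continuity of embeddings $\mathcal H_s \hookrightarrow \mathcal H_{-s}$, boundedness of $X_s \hookrightarrow \mathcal L(\mathcal H_{-s},\mathcal H_s)$ — is routine.
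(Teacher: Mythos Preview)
Your strategy is the paper's: a Lyapunov--Schmidt (equivalently Feshbach) reduction yielding first a real scalar equation that fixes the candidate $\lambda(W)$, and then an $m$-dimensional real obstruction whose transversal vanishing (supplied by (H5)) gives the codimension-$m$ manifold. The paper organizes this via $Q(z,W)=P_0(\overline H+W-z)^{-1}P_0$: the eigenvalue condition is $Q(\lambda+i0,W)\varphi_0=\varphi_0$, split as $Q=A+i\pi P_0\,\delta(\overline H+W-\lambda)\,P_0$; the real part $\langle\varphi_0,A\varphi_0\rangle=1$ determines $\lambda(W)$ by the implicit function theorem, and the imaginary part, via Cauchy--Schwarz for the nonnegative form $\langle\cdot,\delta(\cdot)\cdot\rangle$, forces $\delta(\overline H+W-\lambda(W))\varphi_0=0$, which is then rewritten as $m$ real equations $F_j(W)=\langle g_j,\delta(\cdot)\varphi_0\rangle=0$ with $F_j'(0)W=-\langle g_j,\delta(\overline H-\lambda_0)W\varphi_0\rangle$ linearly independent.

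Your write-up has the right ingredients but is muddled exactly where this clean split matters. (i) $\eta(\lambda,W)$ lives in $Q_0\mathcal H_{-s}$, not $Q_0\mathcal H_s$; the boundary-value resolvent only maps $\mathcal H_s\to\mathcal H_{-s}$. (ii) You cannot assert that $G$ is real ``by reality'' unless you build $\eta$ symmetrically in $\pm i0$ (principal value); if you use the $+i0$ resolvent, $G$ is complex and its imaginary part is \emph{one} real condition, not $m$. The passage from one condition to $m$ independent ones is the positivity argument for $\delta$, and you never invoke it. (iii) The sentence ``feeding it back \ldots\ is genuinely real and in $\mathcal H$, hence a true simple eigenvalue'' is false as stated: $\varphi_0+\eta(\lambda(W),W)$ lies in $\mathcal H$ only when the obstruction $\Phi(W)$ vanishes, which is the entire content of the theorem. (iv) Your definition of $\Phi$ is incoherent: $\eta$ was constructed so that the regular equation has zero residual, so ``its component in $\Ran\delta$'' is zero. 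What you need is $\Phi_j(W)=\langle g_j,\delta(\overline H+W-\lambda(W))\varphi_0\rangle$ against a fixed real dual basis, together with the nontrivial equivalence (Propositions~\ref{P:ev} and~\ref{P:eigenvector} in the paper, using $\text{(H3)}_1$) that $\Phi(W)=0$ is \emph{exactly} the eigenvalue condition once $\lambda=\lambda(W)$. None of these gaps kills the approach, but as written the reduction from one complex scalar condition to $m$ transversal real conditions is not actually carried out.
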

\section{Some preliminary lemmas}\label{S:prel}\noindent
   Some of the propositions of this section are similar  to results found elsewhere in the literature, see e.g. \cite{AHS89,jH72,jH74,tK66}.  Furthermore some of the propositions and lemmas needed for proving Theorem \ref{T:main} are valid without all of the assumptions (H1) -- (H5).

We first remark that we have not assumed a condition of uniformity in $\text{(H3)}_k$.  That this assumption is unnecessary follows from the following lemma:

\begin{Lem} \label{Lem:uniform}
Let $k\ge 0$ and suppose $\text{(H3)}_k$ holds.  Then in any compact subinterval $J_1 \subset (\lambda_0 - \delta_1, \lambda_0 + \delta_1)$ and any $j \le k$ the convergence of $\displaystyle{\frac {d^j(\overline H - \lambda \pm i\epsilon)^{-1}}{d\lambda^j}}$ to its boundary value $\displaystyle{\frac {d^j(\overline H - \lambda \pm i0)^{-1}}{d\lambda^j}}$ is uniform for $\lambda \in J_1$.
\end{Lem}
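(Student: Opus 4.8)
Here is how I would try to prove Lemma~\ref{Lem:uniform}.

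The plan is to first establish an \emph{a priori} uniform bound on $(\overline H-z)^{-1}$ and its $z$-derivatives up to order $k$ in a neighbourhood of $J_1$ inside the closed upper half-plane, and then to upgrade pointwise convergence to uniform convergence. I argue for the boundary value $(\overline H-\lambda-i0)^{-1}$, approached from above, writing $(\overline H-\lambda-i\epsilon)^{-1}=(\overline H-z)^{-1}$ with $z=\lambda+i\epsilon$; the case $(\overline H-\lambda+i0)^{-1}$ is symmetric. Fix $s>s_1$, replace $J_1$ by a slightly larger compact interval still contained in $(\lambda_0-\delta_1,\lambda_0+\delta_1)$, and choose $\epsilon_0>0$ so small that $\text{(H3)}_k$ holds on this interval; let $\mathcal R$ be the open rectangle of height $\epsilon_0$ above it, on which $z\mapsto(\overline H-z)^{-1}$ is holomorphic with values in $\mathcal L(\mathcal H_s,\mathcal H_{-s})$. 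Since $\frac{d^j}{d\lambda^j}(\overline H-\lambda-i\epsilon)^{-1}=j!\,(\overline H-z)^{-(j+1)}$, the crux is to show, for $0\le j\le k$,
\[
  M_j:=\sup\bigl\{\,\|j!\,(\overline H-z)^{-(j+1)}\|_{\mathcal L(\mathcal H_s,\mathcal H_{-s})}\;:\;z\in\mathcal R,\ \Re z\in J_1\,\bigr\}<\infty .
\]

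To prove $M_j<\infty$ I would pass to scalar matrix elements. Two applications of the uniform boundedness principle (first to operators on $\mathcal H_s$, then to their values, regarded as elements of $\mathcal H_{-s}=(\mathcal H_s)^*$) reduce the claim to $\sup_z|\langle\psi,(\overline H-z)^{-(j+1)}\phi\rangle|<\infty$ for each $\phi,\psi\in\mathcal H_s$, and polarization reduces this to the diagonal functions
\[
  u(z):=\langle\chi,(\overline H-z)^{-1}\chi\rangle=\int_{\mathbb R}\frac{d\mu_\chi(t)}{t-z},\qquad \chi\in\mathcal H_s,
\]
where $\mu_\chi\ge0$ is the spectral measure of $\overline H$ at $\chi$; thus $u$ is a Herglotz function and one must bound $u^{(j)}$ on $\mathcal R$ near $J_1$. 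By $\text{(H3)}_0$, $\Im u(\lambda+i\epsilon)=\int_{\mathbb R}\frac{\epsilon\,d\mu_\chi(t)}{(t-\lambda)^2+\epsilon^2}$ has a finite limit at every $\lambda$ in the interval, so $\mu_\chi$ is purely absolutely continuous there; by $\text{(H3)}_k$ its density $\rho=\tfrac1\pi\Im u(\cdot+i0)$ is $C^k$ near $J_1$. Choosing $\tilde\rho\in C^k_c(\mathbb R)$ equal to $\rho$ near $J_1$, so that $\mu_\chi-\tilde\rho\,dt$ is supported away from $J_1$ and contributes a term holomorphic in a neighbourhood of $J_1$, and integrating by parts $j$ times gives, for $z$ near $J_1$,
\[
  u^{(j)}(z)=\int_{\mathbb R}\frac{\tilde\rho^{(j)}(t)\,dt}{t-z}+h^{(j)}(z),\qquad \tilde\rho^{(j)}\in C^0_c(\mathbb R),\quad h\ \text{holomorphic near}\ J_1 .
\]

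The role of this representation is to control $\Im u^{(j)}$: the first term has imaginary part $\int\epsilon\,\tilde\rho^{(j)}(t)\,dt/((t-\lambda)^2+\epsilon^2)$, bounded by $\pi\|\tilde\rho^{(j)}\|_\infty$, and $h^{(j)}$ is bounded on $\mathcal R$, so $u^{(j)}$ maps $\mathcal R$ into a fixed horizontal strip $\Sigma$, while its radial boundary values (which exist and equal the $j$-th derivative of the $C^k$ boundary value $u(\cdot+i0)$) stay, for $\lambda$ in a compact set, in a fixed compact subset of $\Sigma$. Composing with a conformal equivalence $T\colon\Sigma\to\mathbb D$ turns $u^{(j)}$ into a \emph{bounded} holomorphic function $v=T\circ u^{(j)}$ on $\mathcal R$ whose radial boundary values exist and, near $J_1$, lie in a fixed compact subdisc of $\mathbb D$. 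Since a bounded holomorphic function on a rectangle is the Poisson integral of its boundary values, $v$ is continuous up to the interior of the bottom edge; thus $v$ is continuous on $J_1\times[0,\epsilon_0]$ with $|v|<1$ everywhere there, hence $|v|\le r<1$ on that compact set, and therefore $u^{(j)}=T^{-1}\circ v$ maps $J_1\times[0,\epsilon_0]$ into a compact subset of $\Sigma$. This yields $M_j<\infty$.

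Granting $M_j<\infty$, the uniform convergence is easy: $G(z):=j!\,(\overline H-z)^{-(j+1)}$ is a bounded holomorphic $\mathcal L(\mathcal H_s,\mathcal H_{-s})$-valued function on $\mathcal R$ whose radial boundary values on the bottom edge are $\frac{d^j}{d\lambda^j}(\overline H-\lambda-i0)^{-1}$, norm-continuous in $\lambda$ by $\text{(H3)}_k$; applying functionals shows each scalar component of $G$ is a bounded harmonic function, hence the Poisson integral of its boundary values, hence so is $G$, which therefore extends norm-continuously to the interior of the bottom edge, in particular to a compact set $J_1\times[0,\epsilon_0/2]$. Uniform continuity of $G$ there is exactly the asserted uniform convergence on $J_1$, and the same argument handles derivatives of all orders $j\le k$ and both choices of sign. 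The one real difficulty is the bound $M_j<\infty$: near the real axis the only cheap estimate, $\|(\overline H-z)^{-1}\|_{\mathcal L(\mathcal H)}\le(\Im z)^{-1}$, is useless, so pointwise convergence to a continuous limit says nothing by itself, and the work lies in keeping $\Re u^{(j)}$ — essentially the Hilbert transform of the spectral density — bounded near the boundary, which is what the Herglotz structure, the $C^k$ regularity extracted from $\text{(H3)}_k$, and the conformal-map/Poisson-integral device accomplish.
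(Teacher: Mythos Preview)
Your approach for $j=0$ is essentially the paper's: reduce to diagonal matrix elements $u(z)=\langle\chi,(\overline H-z)^{-1}\chi\rangle$, use the Herglotz property to pass to a bounded function (the paper uses the M\"obius map $w\mapsto (w+i)^{-1}$, you use a conformal map strip~$\to$~disc), and invoke the Poisson integral together with the continuity of $u(\cdot+i0)$ furnished by $\text{(H3)}_0$. This part is fine.

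The gap is in the step for $j\ge 1$. You assert that the radial boundary values of $u^{(j)}$ (and later of $G=F^{(j)}$) ``exist and equal the $j$-th derivative of the $C^k$ boundary value $u(\cdot+i0)$''. But $\text{(H3)}_k$ only says that $u(\lambda+i\epsilon)\to u(\lambda+i0)$ for each $\lambda$ and that the limit is $C^k$ in $\lambda$; it does \emph{not} say that $u^{(j)}(\lambda+i\epsilon)\to\frac{d^j}{d\lambda^j}u(\lambda+i0)$. That pointwise convergence of derivatives is precisely (the pointwise part of) what the Lemma asserts, so using it here is circular. Your integration-by-parts representation $u^{(j)}(z)=\int\tilde\rho^{(j)}(t)(t-z)^{-1}\,dt+h^{(j)}(z)$ does not rescue this: it controls $\Im u^{(j)}$ (the Poisson integral of $\tilde\rho^{(j)}$), but $\Re u^{(j)}$ is governed by the Hilbert transform of $\tilde\rho^{(j)}$, and for $j=k$ the function $\tilde\rho^{(k)}$ is only $C^0$, so its Hilbert transform need not be continuous or even locally bounded. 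Hence you cannot conclude that the boundary values of $u^{(j)}$ lie in a compact subset of the strip, and the Poisson/conformal-map argument does not give continuity of $v$ up to the boundary.

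The paper circumvents this entirely. After the $j=0$ step (uniform bound and continuity of $F$ up to the axis), it represents the operator-valued $F$ itself, not $F^{(j)}$, as a Poisson-type integral over a contour that runs along the real axis on a compact interval and dips into the upper half-plane outside. Differentiating this representation with respect to $x$ and integrating by parts on the real segment uses only the $C^k$ regularity of the boundary values of $F$, which \emph{is} given by $\text{(H3)}_k$, and directly yields the uniform convergence of $\partial_x^j F(x+iy)$ to $F_0^{(j)}(x)$. No prior identification of the boundary values of $F^{(j)}$ is needed. Your argument would be repaired by inserting exactly this step after establishing $M_0<\infty$; treating each $u^{(j)}$ independently via the spectral density does not suffice at the top order $j=k$.
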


\begin{proof}
If $J$ is any compact subinterval of $(\lambda_0 - \delta_1, \lambda_0 + \delta_1)$, let $Q_J = \{z\in \mathbb{C};\; \Im z >0, \; \Re z\in J\}$. For $z$ with $\Im z>0$, define $F(z) = (\overline H-z)^{-1}$.
We shall consider $F$ as an operator valued function with values in $\mathcal L( \mathcal H_s, \mathcal H_{-s})$, $s > s_1$. Note that $F$ is analytic in the half-plane $\Im z>0$ and satisfies $\|F(z)\|_{\mathcal L(\mathcal H_s,\mathcal H_{-s})}\le C(\Im z)^{-1}$. We shall show that $F$ is bounded in $Q_J$ and that $F$ admits a continuous extension to $\overline {Q_J}$.
For this purpose, if $f \in \mathcal H_s$ let
\begin{equation*}
 h(z) = (\langle F(z)f,f\rangle +i)^{-1}
 \end{equation*}
 and note that $|h(z)| \leq 1$.  A well known theorem asserts that
$$h(x) := \lim_{y \downarrow 0} h(x+iy)$$
exists for a.e $x\in \mathbb R$ and that
\begin{equation} \label{E:Poisson1}
 h(x+iy) = \frac{1}{\pi}\int_{-\infty}^\infty P(x-t,y)h(t) dt
\end{equation}
where $P$ is the Poisson kernel
$$P(x,y) = \frac{y}{x^2 + y^2}.$$
(see for example \cite {R}, Theorems 11.24 and 11.30). Note that it follows from assumption $\text{(H3)}_k$ (and the definition of $h(z)$) that the limit defining $h(x)$ exists for all $x\in J$, that $h(x)$ is continuous for $x\in J$, and that $h(x)\ne 0$ for $x\in J$.
Hence, it follows from the representation \eqref{E:Poisson1} that $h(z)$ admits a continuous extension to $\overline{Q_J}$ and that for some $\delta>0$,
\[
   |h(z)| \geq \delta, \qquad z \in Q_J \cap \{z: \Im z \leq 1\}
\]
and thus by polarization $\langle F(z)f,g\rangle$ is continuous on $\overline{Q_J}$ for all $f,g \in \mathcal {H}_s$.  In particular the uniform boundedness principle implies that $\|F(z)\|_{\mathcal L(\mathcal H_s,\mathcal H_{-s})} \leq C_J$ for $z\in \overline{Q_J}$, where $J$ is any compact subinterval of $(\lambda_0 - \delta_1, \lambda_0 + \delta_1)$.

With $J$ and $Q_J$ as above, let $\zeta(t), t \in \mathbb{R}$ be a $C^1$ curve in $\overline{Q_J}\cup \{\Im z>0\}$ satisfying $\zeta(t) = t$ for $t\in J$ and  $\zeta(t) = t +i$ for $t \notin (\lambda_0 - \delta_1, \lambda_0 + \delta_1)$ and $\Im \zeta(t) > 0$ if $t \notin J$. Let $z=x+iy$. Integrating the function $F(\zeta)(2\pi i(\zeta-z)(\zeta-\overline z))^{-1}$ along the curve $\zeta(\cdot)$ we obtain (by the residue theorem) the representation
\begin{equation} \label{eqn:Poisson2}
F(x+iy) = \frac{1}{\pi}\int_{\mathcal {C}}\frac{y F(\zeta)}{(x-\zeta)^2 +y^2} d\zeta,
\end{equation}
where $\mathcal {C}$ is the curve $\zeta(\cdot)$.  It is clear from (\ref{eqn:Poisson2}) and the continuity of $F$ on $(\lambda_0 - \delta_1, \lambda_0 + \delta_1)$ that $F$ is continuous in the topology of $\mathcal L( \mathcal H_s, \mathcal H_{-s})$ for $x+iy$ on or above $\mathcal C$.

Differentiating (\ref{eqn:Poisson2}) we obtain

$$\frac{\partial F(x+iy)}{\partial x} = \frac{1}{\pi}\int_{\mathcal {C}}\frac {\partial((x-\zeta)^2 +y^2)^{-1}}{\partial x}y F(\zeta) d\zeta$$
If $J =[a,b]$ we integrate by parts on $[a,b]$ to obtain

\begin{equation} \label{eqn:byparts}
\begin{aligned}
\frac{\partial F(x+iy)}{\partial x} &= \frac{1}{\pi}\int_{\mathcal {C}\setminus [a,b]}\frac {\partial((x-\zeta)^2 +y^2)^{-1}}{\partial x}y F(\zeta) d\zeta + \frac{1}{\pi}\int_{[a,b]}\frac{y F'(\zeta)}{(x-\zeta)^2 +y^2}d\zeta \\
&\qquad + \frac{F(b) y}{\pi((x-b)^2 +y^2)} - \frac{F(a) y}{\pi((x-a)^2 +y^2)}.
\end{aligned}
\end{equation}
If $[a',b'] \subset (a,b)$  it follows that uniformly for $x \in [a',b']$, $\displaystyle{\lim_{y \downarrow 0} \frac{\partial F(x+iy)}{\partial x}} = F'(x)$.  Note also the equality for the one-sided derivative
$$ \left.-i\frac {\partial F(x+iy)}{\partial y}\right|_{y=0} = F'(x) $$ which we will use in Proposition \ref{P:ev}.  This follows from taking $y' \downarrow 0$ in
$$ F(x+iy) - F(x+iy') = \int_{y'}^y \frac{\partial F(x+it)}{\partial t} dt = i\int_{y'}^y \frac{\partial F(x+it)}{\partial x} dt$$

We have thus proved the uniform convergence for $j=0,1$.  In the same way, differentiating (\ref{eqn:byparts}) as many times as necessary and integrating by parts, the lemma follows for limits from the upper half plane.  A similar proof works for the lower half plane.
\end{proof}

The following proposition is basically a corollary of Lemma \ref{Lem:uniform}.
\begin{Prop} \label{Prop:uniform2}
Suppose that $\text{(H3)}_k$ holds for some $k \ge 0$ and $s > s_1$.  Then given a compact subinterval $J_1 \subset (\lambda_0 - \delta_1, \lambda_0 + \delta_1)$ there exists $\gamma = \gamma_{J_1} > 0$ so that if \, $\|W\|_{\mathcal  L(\mathcal H_{-s},\mathcal H_{s})} < \gamma$ and $j \le k$ the limits
\begin{equation}
 \lim_{\epsilon \downarrow 0}\frac{d^j(\overline H +W - \lambda \pm i\epsilon)^{-1}}{d\lambda^j} =  \frac{d^j(\overline H +W - \lambda \pm i0)^{-1}}{d\lambda^j}
\end{equation}
are uniform in $\lambda$ for $\lambda \in J_1$.
\end{Prop}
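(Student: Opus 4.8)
The plan is to expand $(\overline H + W - z)^{-1}$ in a Neumann series in $W$ and inherit the regularity and the uniformity from Lemma \ref{Lem:uniform}. Fix a compact interval $J$ with $J_1 \subset \operatorname{int} J$ and $J \subset (\lambda_0-\delta_1,\lambda_0+\delta_1)$, and put $Q_J = \{z : \Im z>0,\ \Re z\in J\}$ and $F(z) = (\overline H - z)^{-1}$, regarded as an $\mathcal L(\mathcal H_s,\mathcal H_{-s})$-valued function. The proof of Lemma \ref{Lem:uniform} shows that $F$ extends to a function on $\overline{Q_J}$ which is $C^k$ up to the boundary, that $\sup_{\overline{Q_J}}\|F(z)\|_{\mathcal L(\mathcal H_s,\mathcal H_{-s})}=:C_J<\infty$, and that for $j\le k$ the limits $\lim_{\epsilon\downarrow0}\frac{d^j}{d\lambda^j}F(\lambda\pm i\epsilon)$ are attained uniformly on $J_1$; moreover each $F^{(j)}$ is bounded on $\overline{Q_J}$ (continuous there, and $O((\Im z)^{-j-1})$ as $\Im z\to\infty$, using $\|\cdot\|_{\mathcal L(\mathcal H_s,\mathcal H_{-s})}\le\|\cdot\|_{\mathcal L(\mathcal H,\mathcal H)}$). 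Since $W\in\mathcal L(\mathcal H_{-s},\mathcal H_s)$ we have $WF(z)\in\mathcal L(\mathcal H_s,\mathcal H_s)$ with $\|WF(z)\|_{\mathcal L(\mathcal H_s,\mathcal H_s)}\le C_J\|W\|_{\mathcal L(\mathcal H_{-s},\mathcal H_s)}$ for $z\in\overline{Q_J}$, so choosing $\gamma=\gamma_{J_1}:=(2C_J)^{-1}$ forces $\|WF(z)\|_{\mathcal L(\mathcal H_s,\mathcal H_s)}\le\tfrac12$ on $\overline{Q_J}$ whenever $\|W\|_{\mathcal L(\mathcal H_{-s},\mathcal H_s)}<\gamma$.

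Next I would set $G(z):=\sum_{n\ge0}(-WF(z))^n\in\mathcal L(\mathcal H_s,\mathcal H_s)$, the series converging absolutely and uniformly on $\overline{Q_J}$; thus $G(z)=(I+WF(z))^{-1}$. For $\Im z>0$ the identity $\overline H+W-z=(I+WF(z))(\overline H-z)$ on $\Dom(\overline H)$ gives $(\overline H+W-z)^{-1}=F(z)G(z)$ in $\mathcal L(\mathcal H_s,\mathcal H_{-s})$, once one notes that the $\mathcal L(\mathcal H_s)$-valued Neumann series agrees on $\mathcal H_s$ with the inverse of $I+WF(z)$ computed on $\mathcal H$ (which exists since $I+WF(z)=(\overline H+W-z)(\overline H-z)^{-1}$). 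It therefore suffices to prove that $G$, and hence the product $F\cdot G$, is $C^k$ on $\overline{Q_J}$ with its $\epsilon$-approximants converging uniformly on $J_1$; the conclusion for $(\overline H+W-\lambda\pm i\epsilon)^{-1}$ then follows from the Leibniz rule applied to $F(z)G(z)$ together with the corresponding properties of $F$ supplied by Lemma \ref{Lem:uniform}.

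For the regularity of $G$ I would differentiate the Neumann series term by term. Each $z\mapsto(WF(z))^n$ is $C^k$ on $\overline{Q_J}$ (a product of $C^k$ functions in the Banach algebra $\mathcal L(\mathcal H_s,\mathcal H_s)$), and the multivariate Leibniz rule gives, for $j\le k$ and $n\ge j$, the bound $\big\|\frac{d^j}{dz^j}(WF(z))^n\big\|_{\mathcal L(\mathcal H_s,\mathcal H_s)}\le c_j\,n^j\,\rho^{\,n-j}$ on $\overline{Q_J}$, where $\rho:=C_J\|W\|_{\mathcal L(\mathcal H_{-s},\mathcal H_s)}<1$ and $c_j$ depends only on $J$ through $\sup_{\overline{Q_J}}\|F^{(i)}\|$ for $i\le j$; the relevant facts are that at most $j$ of the $n$ factors get differentiated and that each undifferentiated factor has norm $\le\rho$. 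Since $\sum_n n^j\rho^{\,n}<\infty$, the termwise-differentiated series converges uniformly on $\overline{Q_J}$, so $G\in C^k(\overline{Q_J};\mathcal L(\mathcal H_s,\mathcal H_s))$; applying the same estimates to the difference of the $\epsilon>0$ term and its boundary value, and using the uniform convergence of $F^{(i)}(\lambda\pm i\epsilon)$ on $J_1$, yields uniform convergence of $\frac{d^j}{d\lambda^j}G(\lambda\pm i\epsilon)$ on $J_1$. The lower half-plane is handled identically. The only real work is the bookkeeping behind the estimate $c_j n^j\rho^{\,n-j}$, which is exactly what makes the differentiated Neumann series summable uniformly in $z\in\overline{Q_J}$ and in $W$ with $\|W\|_{\mathcal L(\mathcal H_{-s},\mathcal H_s)}<\gamma$; the rest is routine.
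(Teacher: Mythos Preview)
Your proof is correct and follows essentially the same route as the paper: both choose $\gamma$ so that $\|W(\overline H - z)^{-1}\|<1$ on the relevant strip, write $(\overline H+W-z)^{-1}=(\overline H-z)^{-1}(I+W(\overline H-z)^{-1})^{-1}$, and inherit the $C^k$ regularity and uniform convergence from Lemma~\ref{Lem:uniform}. The only difference is that the paper dispatches the regularity of the second factor in one line by invoking that $A\mapsto A^{-1}$ is $C^\infty$ together with the chain rule, whereas you make this explicit by expanding in a Neumann series and differentiating term by term with the $c_j n^j\rho^{n-j}$ estimate.
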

\begin{proof}
Let $z = \lambda + i \epsilon$. Define $\gamma$ so that
$$\gamma \sup\{ \|(\overline H - z)^{-1}\|_{\mathcal  L(\mathcal H_s,\mathcal H_{-s})};\; \Re z \in J_1,\, \Im z \ge 0\} < 1.$$
Then
\begin{equation}\label{E:pre-boundary}
      (\overline H +W - z)^{-1} = (\overline H - z)^{-1}(I +W (\overline H - z)^{-1})^{-1}
\end{equation}
Then noting that $A \mapsto A^{-1}$ is $C^{\infty}$ we can differentiate (\ref{E:pre-boundary}) $k$ times using the chain rule and take limits as $\epsilon \downarrow 0$ to obtain the result.
\end{proof}

We note that by Theorem XIII.20 of \cite{RS78}, if condition $\text{(H3)}_0$ holds and $s > s_1$, then for $\|W\|_{\mathcal  L(\mathcal H_{-s},\mathcal H_{s})} < \gamma_{J_1}$ where $J_1$ is a compact subinterval of $(\lambda_0 - \delta_1, \lambda_0 + \delta_1)$,
$\overline H$ and $\overline H+W$ have purely absolutely continuous spectrum in $J_1$.

   Recall that $P_0$
   is the orthogonal projection in $\mathcal H$ onto the eigenspace corresponding
   to $\lambda_0$ and that $\overline H = H + P_0$.  We have the following proposition:
   \begin{Prop}\label{P:H bar H}
   Let $H$ satisfy (H1), (H2), and $\text{(H3)}_0$ and assume $s>s_1$.
   Then
      \begin{enumerate}
         \item [(i)]{$\sigma_c(\overline H) = \sigma_c(H)$},
         \item [(ii)]{$\overline H$ has no eigenvalues in $(\lambda_0 -\delta_1, \lambda_0 + \delta_1)$.}
      \end{enumerate}
   \end{Prop}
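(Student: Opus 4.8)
The plan is to exploit the fact that $P_0$ is a finite-rank perturbation living in $\mathcal H_s$ for all $s$, so that it is a trace-class (indeed rank-one on each eigenvector) smoothing operator, while $\text{(H3)}_0$ gives us a limiting absorption principle for $\overline H$ near $\lambda_0$. For (i), the essential spectra of $H$ and $\overline H = H + P_0$ coincide by Weyl's theorem since $P_0$ is compact; away from $\lambda_0$ the point spectrum of $\overline H$ can only differ from that of $H$ at finitely many points (again by Weyl/Kato for finite-rank perturbations), but the cleanest route is: $H$ and $\overline H$ agree on $(\Ran P_0)^\perp = \Ran(I-P_0)$, which reduces both operators, and on $\Ran P_0$ the operator $\overline H$ acts as $\lambda_0 + 1$ times the identity, so $\sigma(\overline H) = \sigma(H\restriction_{(\Ran P_0)^\perp}) \cup \{\lambda_0+1\}$. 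Since $\lambda_0$ is isolated in $\sigma_{pp}(H)$ and embedded in $\sigma_c(H)$, removing the eigenvalue $\lambda_0$ and adding an isolated point $\lambda_0+1$ leaves the continuous spectrum unchanged: $\sigma_c(\overline H) = \sigma_c(H)$.

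For (ii), which is the substantive part, I would argue by contradiction. Suppose $\overline H \psi = \mu \psi$ with $\psi \ne 0$ and $\mu \in (\lambda_0-\delta_1, \lambda_0+\delta_1)$. Decompose $\psi = \varphi + \psi^\perp$ with $\varphi = P_0\psi \in \Ran P_0 \subset \cap_{s\ge 0}\mathcal H_s$ and $\psi^\perp = (I-P_0)\psi$. Applying $I-P_0$ to the eigenvalue equation and using that $\overline H$ commutes with $P_0$ gives $H\psi^\perp = \mu\psi^\perp$; since $H$ has no eigenvalue in the punctured neighborhood and $\mu = \lambda_0$ would force $\psi^\perp \in \Ran P_0 \cap \Ran(I-P_0) = \{0\}$, in all cases $\psi^\perp = 0$. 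Hence $\psi = \varphi \in \Ran P_0$, so $\overline H\psi = H\psi + \psi = \lambda_0\psi + \psi = (\lambda_0+1)\psi$, forcing $\mu = \lambda_0 + 1 \notin (\lambda_0-\delta_1,\lambda_0+\delta_1)$ (shrinking $\delta_1 < 1$ if necessary, which is harmless), a contradiction. Thus $\overline H$ has no eigenvalue in $(\lambda_0-\delta_1,\lambda_0+\delta_1)$.

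The only delicate point is the very first reduction — that $P_0$ commutes with $\overline H$ and that $\Ran P_0$ and its orthogonal complement both reduce $\overline H$. This is immediate once one checks $P_0 H = H P_0$, which holds because $\Ran P_0$ is spanned by eigenvectors of the self-adjoint operator $H$ at a single eigenvalue, hence is a reducing subspace for $H$; then $P_0\overline H = P_0 H + P_0 = H P_0 + P_0 = \overline H P_0$. With that in hand everything is formal. I do not expect to need $\text{(H3)}_0$ at all for this particular proposition beyond ensuring $\delta_1$ is defined; its role is really to set up the later spectral analysis, and the statement here is purely a consequence of $P_0$ being a finite-rank spectral projection of $H$. (If one wants to avoid shrinking $\delta_1$, one can instead invoke that $\lambda_0 + 1 \notin \overline{(\lambda_0-\delta_1,\lambda_0+\delta_1)}$ under the standing assumption $\delta_1 \le 1$, which may be built into $\text{(H3)}_k$; either way the conclusion stands.)
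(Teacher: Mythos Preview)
Your argument for (i) is essentially the paper's: both exploit that $\Ran P_0$ reduces $H$ and $\overline H$, so the two operators coincide on $(\Ran P_0)^\perp$ and hence have the same continuous spectrum.

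For (ii) the paper takes a different and shorter route: it invokes $\text{(H3)}_0$ directly. By the remark following Proposition~\ref{Prop:uniform2} (Theorem~XIII.20 of Reed--Simon), existence of the norm boundary values $(\overline H-\lambda\pm i0)^{-1}$ in $\mathcal L(\mathcal H_s,\mathcal H_{-s})$ for $\lambda\in(\lambda_0-\delta_1,\lambda_0+\delta_1)$ forces $\overline H$ to have purely absolutely continuous spectrum there, hence no eigenvalues. Your reducing-subspace argument is more elementary and would be attractive, but it has a gap precisely where you dismiss $\text{(H3)}_0$. The step ``$H$ has no eigenvalue in the punctured neighborhood'' uses only that $\lambda_0$ is isolated in $\sigma_{pp}(H)$ from (H2); this gives some punctured interval free of eigenvalues, but not necessarily the interval of radius $\delta_1$ specified by $\text{(H3)}_0$. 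If $H$ had an eigenvalue $\mu$ with $0<|\mu-\lambda_0|<\delta_1$ and eigenvector $\psi\perp\Ran P_0$, then $\overline H\psi=\mu\psi$ as well, and nothing in your argument excludes this. What does exclude it is exactly $\text{(H3)}_0$: a limiting absorption principle on an interval rules out embedded point spectrum there, and that is how the paper closes the argument. So your proof establishes (ii) only on some possibly smaller neighborhood of $\lambda_0$, and your parenthetical remark that $\text{(H3)}_0$ is inessential here is mistaken---it is the hypothesis that pins down the full interval $(\lambda_0-\delta_1,\lambda_0+\delta_1)$. (The separate issue of needing $\delta_1\le 1$ to exclude $\mu=\lambda_0+1$ is, as you say, harmless.)
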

   \begin{proof}
      To prove (i), let $\psi\in\Dom H\cap\Ran (I - P_0) = \Dom H \cap\ker P_0$. Then
      \begin{equation*}
         \overline H \psi = (H + P_0) \psi = H \psi.
      \end{equation*}
      In other words, on $\Dom H\cap\ker P_0$, the operators $H$ and $\overline H$ coincide. In particular, their continuous spectra
      are the same.

      According to the remark after Proposition \ref{Prop:uniform2}, $\overline H$ has purely absolutely continuous spectrum in $(\lambda_0 -\delta_1, \lambda_0 + \delta_1)$.  Thus (ii) follows.
   \end{proof}

   Our proof of Theorem \ref{T:main} is based on the study of the operator $Q(z,W)$ defined by
   \begin{equation}\label{E:Q}
      Q(z,W) = P_0(\overline H + W -z)^{-1} P_0
   \end{equation}
   for $\Im z\ne 0$ and $W\in X_s$ ($s\ge 0$).
   Note that $Q(z,W)$ is a finite dimensional operator on $\Ran P_0$, since $\lambda_0$ has finite multiplicity.

   The resolvent equation shows that for $\text{Im } z \ne 0$,
   \begin{equation}\label{E:firstres}
      (H+W-z)^{-1} = (\overline H + W - z)^{-1} + (H + W - z)^{-1} P_0 (\overline H + W - z)^{-1},
   \end{equation}
   which implies that
   \begin{equation*}
      (H + W - z)^{-1} P_0 = (\overline H + W - z)^{-1} P_0 + (H + W - z)^{-1} Q(z,W),
   \end{equation*}
   or equivalently
   \begin{equation}\label{E:I-Q}
      (H+W-z)^{-1}P_0(I-Q(z,W))=(\overline H + W -z)^{-1} P_0.
   \end{equation}
   This formula gives a one to one correspondence between the two operators $(H+W - z)^{-1} P_0$ and
   $(\overline H + W - z)^{-1} P_0$. In fact, we have
   \begin{Prop}\label{P:Q-inv}
      Suppose that (H1) and (H2) are satisfied, and for $\Im z\ne 0$ and $W\in X_s$ ($s\ge 0$) let
      $Q(z,W):\mathcal H\to \mathcal H$ be given by
      \eqref{E:Q}. Then
      $I - Q(z,W)$ is invertible.
   \end{Prop}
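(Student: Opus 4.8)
The plan is to reduce the assertion to a finite-dimensional injectivity question and then to exploit the self-adjointness of $\overline H + W$ and of $H + W$. First I would note that, since $P_0$ and $W$ are bounded self-adjoint operators, $\overline H + W = H + P_0 + W$ is self-adjoint on $\Dom H$; hence for $\Im z \ne 0$ the resolvent $(\overline H + W - z)^{-1}$ exists as a bounded, everywhere-defined operator with bounded inverse $\overline H + W - z$. In particular $Q(z,W) = P_0(\overline H + W - z)^{-1}P_0$ is a well-defined bounded operator on $\mathcal H$; its range is contained in $\Ran P_0$ and it vanishes on $\ker P_0$. Decomposing $\mathcal H = \Ran P_0 \oplus \ker P_0$, one checks that $I - Q(z,W)$ leaves each summand invariant, equals the identity on $\ker P_0$, and maps the finite-dimensional space $\Ran P_0$ (finite-dimensional by (H2)) into itself. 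Consequently $I - Q(z,W)$ is invertible on $\mathcal H$ precisely when its restriction to $\Ran P_0$ is injective.

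It therefore suffices to show that if $\varphi \in \Ran P_0$ satisfies $(I - Q(z,W))\varphi = 0$, then $\varphi = 0$. Put $u := (\overline H + W - z)^{-1}\varphi \in \Dom H$, so that $(\overline H + W - z)u = \varphi$. Since $P_0\varphi = \varphi$, the hypothesis reads $\varphi = Q(z,W)\varphi = P_0(\overline H + W - z)^{-1}\varphi = P_0 u$. Substituting $\varphi = P_0 u$ into $(\overline H + W - z)u = \varphi$ and using $\overline H = H + P_0$ yields $(H + W - z)u + P_0 u = P_0 u$, i.e. $(H + W - z)u = 0$. But $H + W$ is self-adjoint and $\Im z \ne 0$, so $z$ is not an eigenvalue of $H + W$; hence $u = 0$ and therefore $\varphi = P_0 u = 0$.

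As an alternative for the last step, once $\varphi = Q(z,W)\varphi$ is known one may simply apply $\varphi$ to the identity \eqref{E:I-Q}: the left-hand side $(H+W-z)^{-1}P_0(I - Q(z,W))\varphi$ vanishes, so $(\overline H + W - z)^{-1}\varphi = (\overline H + W - z)^{-1}P_0\varphi = 0$, and injectivity of the resolvent again forces $\varphi = 0$. Either way the computation is essentially one line. The only points that need (minor) care are the self-adjointness of $\overline H + W$ and $H + W$ — so that the relevant resolvents exist for $\Im z \ne 0$ — and the reduction of invertibility of $I - Q(z,W)$ on all of $\mathcal H$ to injectivity on the finite-dimensional subspace $\Ran P_0$; I do not expect any genuinely hard step here.
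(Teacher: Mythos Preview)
Your proposal is correct and essentially follows the paper's approach. The paper's proof is precisely your ``alternative'' via \eqref{E:I-Q}: from $(I-Q(z,W))f=0$ and \eqref{E:I-Q} one gets $(\overline H+W-z)^{-1}P_0f=0$, hence $P_0f=0$, and then $(I-P_0)f=0$ trivially; your main argument is a minor variant that avoids \eqref{E:I-Q} by computing directly that $u=(\overline H+W-z)^{-1}\varphi$ satisfies $(H+W-z)u=0$, but both routes rest on the same point---injectivity of the resolvent of a self-adjoint operator at non-real $z$.
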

   \begin{proof}
      If $I - Q(z,W)$ is not invertible, then by the Fredholm alternative, there exists an $f\ne 0$ such that
      \begin{equation*}
         (I-Q(z,W))f = 0.
      \end{equation*}
      But then by \eqref{E:I-Q},
      \begin{equation*}
         (\overline H + W -z)^{-1} P_0 f = 0,
      \end{equation*}
      which implies that $P_0 f=0$. This means that $f=(I-P_0)f$, and so
      \begin{equation}\label{E:prod}
         0=(I-Q(z,W))(I-P_0)f = (I-P_0)f,
      \end{equation}
      and we see that $f=0$.
   \end{proof}
   Proposition \ref{P:Q-inv} together with equation \eqref{E:I-Q} show that
   \begin{equation*}
      (H + W - z)^{-1} P_0 = (\overline H + W -z)^{-1} P_0 (I-Q(z,W))^{-1}.
   \end{equation*}
   Then by \eqref{E:firstres} we have
   \begin{equation}\label{E:H - bar H}
      (H+W-z)^{-1} = (\overline H + W - z)^{-1} + (\overline H + W - z)^{-1} P_0 (I-Q(z,W))^{-1} P_0 (\overline H + W - z)^{-1}.
   \end{equation}
   Assuming $\text{(H3)}_0$ and $s>s_1$, it follows that
   \begin{equation}\label{E:decomp}
      Q(\lambda + i 0,W) = A(\lambda, W) + i \pi P_0 \delta(\overline H + W - \lambda) P_0,
   \end{equation}
   where
   \begin{equation}\label{E:A-def}
      A(\lambda,W):= \frac{1}{2} \left(Q(\lambda + i0,W) + Q(\lambda - i0,W)\right)
   \end{equation}
   and
   $\delta(\overline H + W - \lambda)$ is given by
   \begin{equation}\label{E:delta-def}
      \begin{aligned}
         \delta(\overline H + W - \lambda):&=
         \lim_{
         \begin{smallmatrix}
            z=\lambda + i \epsilon\\
            \epsilon\downarrow 0
         \end{smallmatrix}}
         \frac{1}{2\pi i} \left((\overline H + W - z)^{-1} - (\overline H + W -\bar z)^{-1}\right) \\
         &= \lim_{
         \begin{smallmatrix}
            z=\lambda + i \epsilon\\
            \epsilon\downarrow 0
         \end{smallmatrix}}
         (\overline H + W - z)^{-1}\frac{\epsilon}{\pi}(\overline H + W - \bar z)^{-1}.
      \end{aligned}
   \end{equation}

   \begin{Prop} \label{P:ev}
      Let $H$ satisfy (H1), (H2), and $\text{(H3)}_1$. Let $s>s_1$ be fixed, and let $J_1$ and $\gamma$ be as in Proposition \ref{Prop:uniform2} with $\|W\|_{\mathcal  L(\mathcal H_{-s},\mathcal H_{s})} < \gamma$.
      If $\lambda \in J_1$, then $1$ is an eigenvalue of $Q(\lambda + i 0, W)$ if and only if
      $\lambda$ is an eigenvalue of $H+W$.
   \end{Prop}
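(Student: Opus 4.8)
The plan is to prove the two implications separately, in each case using the identity $\overline H = H + P_0$ to pass between the resolvents of $\overline H + W$ and of $H+W$. Throughout I will use the following standing facts, valid because $\lambda\in J_1$ and $\|W\|_{\mathcal L(\mathcal H_{-s},\mathcal H_s)}<\gamma$: the operator $\overline H + W$ has purely absolutely continuous spectrum in a neighborhood of $\lambda$, and in particular no eigenvalue at $\lambda$ (the remark after Proposition \ref{Prop:uniform2}); the boundary value $(\overline H + W - \lambda - i0)^{-1}$ exists in the norm of $\mathcal L(\mathcal H_s,\mathcal H_{-s})$ (Proposition \ref{Prop:uniform2}); and $P_0$ extends to a bounded operator $\mathcal H_{-s}\to\mathcal H_s$, since $\Ran P_0\subset\cap_{t\ge 0}\mathcal H_t$ by (H2). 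Recall $Q(z,W)=P_0(\overline H + W - z)^{-1}P_0$, so that $Q(\lambda+i0,W)g=P_0(\overline H + W - \lambda - i0)^{-1}g$ for $g\in\Ran P_0$.

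Suppose first that $\lambda$ is an eigenvalue of $H+W$, with eigenvector $u\ne 0$. Then $u\in\Dom(H)=\Dom(\overline H + W)$, and $(\overline H + W - \lambda)u = (H+W+P_0-\lambda)u = P_0u=:g$, where $g\ne 0$ because otherwise $u$ would be an eigenvector of $\overline H + W$ at $\lambda$. For $\epsilon>0$ one checks that $(\overline H + W - \lambda - i\epsilon)^{-1}g = u + i\epsilon(\overline H + W - \lambda - i\epsilon)^{-1}u$. As $\epsilon\downarrow 0$ the left-hand side converges in $\mathcal H_{-s}$ to $(\overline H + W - \lambda - i0)^{-1}g$ (note $g\in\mathcal H_s$), while the right-hand side converges in $\mathcal H$ to $u$, since $i\epsilon(\overline H + W - \lambda - i\epsilon)^{-1}u$ tends to $E_{\overline H+W}(\{\lambda\})u=0$ in $\mathcal H$ (spectral theorem, together with the absence of an eigenvalue at $\lambda$). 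Hence $u=(\overline H + W - \lambda - i0)^{-1}g$, and applying $P_0$ gives $g=P_0u=Q(\lambda+i0,W)g$ with $g\ne 0$; that is, $1$ is an eigenvalue of $Q(\lambda+i0,W)$. Note that this direction uses only $\text{(H3)}_0$.

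For the converse, suppose $Q(\lambda+i0,W)g=g$ for some $g\in\Ran P_0$, $g\ne 0$, and for $\epsilon>0$ put $u_\epsilon:=(\overline H + W - \lambda - i\epsilon)^{-1}g\in\Dom(H)$. Using $\overline H = H+P_0$ and $P_0g=g$ one finds $(H+W-\lambda-i\epsilon)u_\epsilon = g - P_0u_\epsilon = \bigl(Q(\lambda+i0,W)-Q(\lambda+i\epsilon,W)\bigr)g$. The decisive estimate — and the only place where $\text{(H3)}_1$ rather than $\text{(H3)}_0$ is needed — is $\|Q(\lambda+i\epsilon,W)-Q(\lambda+i0,W)\|=O(\epsilon)$ as $\epsilon\downarrow 0$, i.e.\ differentiability of the boundary value of $(\overline H + W - \lambda - i\epsilon)^{-1}$ in $\epsilon$ at $\epsilon=0^+$; this is obtained from $\text{(H3)}_1$ by the integral-representation and integration-by-parts argument used to prove Lemma \ref{Lem:uniform}, carried out for $\overline H + W$ as in Proposition \ref{Prop:uniform2}, together with the boundedness of $P_0$. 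Therefore $\|(H+W-\lambda-i\epsilon)u_\epsilon\|_{\mathcal H}=O(\epsilon)$, and since $H+W$ is self-adjoint, $\|u_\epsilon\|_{\mathcal H}\le\epsilon^{-1}\|(H+W-\lambda-i\epsilon)u_\epsilon\|_{\mathcal H}=O(1)$. Setting $u:=(\overline H + W - \lambda - i0)^{-1}g$, we have $u_\epsilon\to u$ in $\mathcal H_{-s}$ and $P_0u=Q(\lambda+i0,W)g=g\ne 0$, so $u\ne 0$. Since $\{u_\epsilon\}$ is bounded in $\mathcal H$, some sequence $u_{\epsilon_n}$ converges weakly in $\mathcal H$, necessarily to $u$ (its weak $\mathcal H$-limit must agree with its strong $\mathcal H_{-s}$-limit, by density of $\mathcal H_s$ in $\mathcal H$), so $u\in\mathcal H$; moreover $(H+W)u_{\epsilon_n}=\lambda u_{\epsilon_n}+i\epsilon_nu_{\epsilon_n}+(g-P_0u_{\epsilon_n})\rightharpoonup\lambda u$ weakly in $\mathcal H$, the last two terms tending to $0$ in norm. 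As the graph of the self-adjoint operator $H+W$ is (weakly) closed, $u\in\Dom(H+W)$ and $(H+W)u=\lambda u$; with $u\ne 0$ this says $\lambda$ is an eigenvalue of $H+W$.

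I expect the converse implication to be the real obstacle. Its substance is to show that the generalized eigenfunction $(\overline H + W - \lambda - i0)^{-1}g$, which a priori lies only in the larger space $\mathcal H_{-s}$, is in fact an element of $\mathcal H$, hence a bona fide eigenvector of $H+W$. What makes this go through is the $O(\epsilon)$ rate at which $Q(\lambda+i\epsilon,W)$ approaches its boundary value — precisely where the $C^1$ hypothesis $\text{(H3)}_1$ enters — which is converted, via the elementary resolvent bound $\|(H+W-\lambda-i\epsilon)^{-1}\|\le\epsilon^{-1}$ for the self-adjoint operator $H+W$, into a uniform $\mathcal H$-bound on the approximants $u_\epsilon$; once that bound is available the passage to the limit is routine weak compactness.
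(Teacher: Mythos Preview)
Your proof is correct, but both implications are argued differently from the paper. For the forward implication the paper argues the contrapositive: if $1\notin\sigma(Q(\lambda+i0,W))$, then $(I-Q(\lambda+i0,W))^{-1}$ exists and the identity \eqref{E:H - bar H} lets one pass to the boundary value $(H+W-\lambda-i0)^{-1}$ in $\mathcal L(\mathcal H_s,\mathcal H_{-s})$, which precludes an eigenvalue at $\lambda$; you instead start from an eigenvector $u$ and exhibit $g=P_0u$ as a nonzero fixed point of $Q(\lambda+i0,W)$. For the converse the paper again works by contrapositive: assuming $E_{\{\lambda\}}(H+W)=0$ and $Q(\lambda+i0,W)f=f$, it combines the algebraic identity \eqref{E:I-Q} with the Taylor expansion of $Q$ (this is where $\text{(H3)}_1$ enters) to force $P_0f=0$ and hence $f=0$. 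Your argument is constructive: you show directly that $u:=(\overline H+W-\lambda-i0)^{-1}g$ lies in $\mathcal H$ and is an eigenvector, by converting the $O(\epsilon)$ rate from $\text{(H3)}_1$ into a uniform $\mathcal H$-bound on $u_\epsilon$ via the elementary self-adjoint estimate $\|(H+W-\lambda-i\epsilon)^{-1}\|\le\epsilon^{-1}$, and then using weak compactness and weak closedness of the graph. The paper's route is a bit shorter because it recycles the identities \eqref{E:I-Q}, \eqref{E:H - bar H} already in hand, deferring the eigenvector formula to the separate Proposition~\ref{P:eigenvector}; your route is self-contained and yields that formula as a byproduct, at the modest cost of the weak-limit argument.
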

   \begin{proof}
      The limit $Q(\lambda + i 0,W)$ is a compact operator, so if
      $Q(\lambda + i 0,W)$ does
      not have eigenvalue $1$, then $(I-Q(\lambda + i 0, W))^{-1}$ exists, and we
      can take the limit in \eqref{E:H - bar H}. Hence also $(H+W-\lambda - i 0)^{-1}$ exists. It was shown
      above that this implies that $\lambda$ is not an eigenvalue of $H+W$.
To prove the converse, we first use Proposition \ref{Prop:uniform2} for $j=1$ to see that $Q(z,W)$ has a $C^1$ extension to the real axis for $\lambda \in J_1$.  In particular we have the Taylor expansion
      \begin{equation}\label{E:Taylor}
         Q(z,W) = Q(\lambda+i0,W) + Q_z'(\lambda+i0,W)(z-\lambda) + o(|z-\lambda|).
      \end{equation}

      Suppose that $Q(\lambda + i 0,W)f=f$ for some $f$. Then by the definition of $Q(z,W)$,
      $(I-P_0)f=0$. If $\lambda$ is not an eigenvalue of
      $H+W$, then by \eqref{E:I-Q} and \eqref{E:Taylor}
      \begin{equation*}
         \begin{aligned}
            P_0 f &= \lim_{
            \begin{smallmatrix}
               z= \lambda + i \epsilon \\
               \epsilon\downarrow 0
            \end{smallmatrix}}
            (I + P_0 (H + W - z)^{-1}) P_0 (I-Q(z,W))f \\
            &= -\lim_{
            \begin{smallmatrix}
               z= \lambda + i \epsilon \\
               \epsilon\downarrow 0
            \end{smallmatrix}}
            (I + P_0 (H + W - z)^{-1}) P_0 \left(Q'_z(\lambda + i0,W)(z-\lambda) + o(|z-\lambda|)\right)f \\
            &=-\lim_{
            \begin{smallmatrix}
               z= \lambda + i \epsilon \\
               \epsilon\downarrow 0
            \end{smallmatrix}}
            P_0 (H + W - z)^{-1}(z-\lambda) P_0 Q'_z(\lambda + i0,W) f \\
            &= P_0 E_{\{\lambda\}}(H+W) P_0 Q'_z(\lambda + i0,W)f =0
         \end{aligned}
      \end{equation*}
      since $E_{\{\lambda\}}(H+W) = 0$.  This shows that $f=0$.
   \end{proof}
   \begin{Cor}
      Let $H$ satisfy (H1), (H2), and $\text{(H3)}_1$. Let $s>s_1$ be fixed, and let $J_1$ and $\gamma$ be as in Proposition \ref{Prop:uniform2} with $\|W\|_{\mathcal  L(\mathcal H_{-s},\mathcal H_{s})} < \gamma$. It follows that if $\lambda$ is not an eigenvalue of $H+W$, then the limiting absorption principle holds for $H+W$ at $\lambda$.
   \end{Cor}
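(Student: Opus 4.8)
The plan is to read the statement off from the resolvent identity \eqref{E:H - bar H}, exactly as in the first part of the proof of Proposition \ref{P:ev}; only two small additional points are needed. Fix $\lambda \in J_1$ and assume $\lambda$ is not an eigenvalue of $H+W$. Since $\text{(H3)}_1$ holds (hence so does $\text{(H3)}_0$) and $\|W\|_{\mathcal L(\mathcal H_{-s},\mathcal H_s)} < \gamma_{J_1}$, Proposition \ref{Prop:uniform2} provides the norm limits $(\overline H + W - \lambda \pm i0)^{-1} = \lim_{\epsilon\downarrow 0}(\overline H + W - \lambda \pm i\epsilon)^{-1}$ in $\mathcal L(\mathcal H_s,\mathcal H_{-s})$. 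By (H2) the eigenspace $\Ran P_0$ lies in $\cap_{t\ge 0}\mathcal H_t$, so $P_0 \in \mathcal L(\mathcal H_{-s},\mathcal H_s)$, and therefore $Q(z,W) = P_0(\overline H + W - z)^{-1}P_0$ extends continuously to $z = \lambda \pm i0$ as an operator on the finite-dimensional space $\Ran P_0$, with $Q(\lambda \pm i\epsilon, W) \to Q(\lambda \pm i0,W)$ in norm.

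Next I would invert $I - Q$ at the boundary. By Proposition \ref{P:ev}, the hypothesis that $\lambda$ is not an eigenvalue of $H+W$ forces $1$ not to be an eigenvalue of $Q(\lambda + i0, W)$; and since $\overline H + W$ is self-adjoint one has $Q(\lambda - i0, W) = Q(\lambda + i0, W)^{*}$, so $1$ is not an eigenvalue of $Q(\lambda - i0, W)$ either. Because $\Ran P_0$ is finite-dimensional, $I - Q(\lambda \pm i0, W)$ is then invertible, and by continuity of inversion of matrices $(I - Q(\lambda \pm i\epsilon, W))^{-1} \to (I - Q(\lambda \pm i0, W))^{-1}$ in norm as $\epsilon \downarrow 0$. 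Passing to this limit in \eqref{E:H - bar H}, every factor on the right-hand side converges in the appropriate operator norm — each occurrence of $(\overline H + W - z)^{-1}$ in $\mathcal L(\mathcal H_s,\mathcal H_{-s})$, each factor $P_0$ in $\mathcal L(\mathcal H_{-s},\mathcal H_s)$, and $(I - Q(z,W))^{-1}$ on $\Ran P_0$ — so $\lim_{\epsilon\downarrow 0}(H+W-\lambda \pm i\epsilon)^{-1}$ exists in $\mathcal L(\mathcal H_s,\mathcal H_{-s})$. This is precisely the assertion that the limiting absorption principle holds for $H+W$ at $\lambda$.

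I do not expect any real obstacle: the corollary is bookkeeping on top of Propositions \ref{Prop:uniform2} and \ref{P:ev}. The only two steps worth stating carefully are the mapping property $P_0 \in \mathcal L(\mathcal H_{-s},\mathcal H_s)$, which is where (H2) enters, and the symmetry $Q(\lambda - i0,W) = Q(\lambda + i0,W)^{*}$, which is what lets us pass the conclusion of Proposition \ref{P:ev} from the upper to the lower half-plane.
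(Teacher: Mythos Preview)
Your argument is correct and is precisely the route the paper takes: the paper's proof is the single sentence ``The statement follows from Proposition \ref{P:ev} and \eqref{E:H - bar H},'' and you have supplied the bookkeeping that makes this rigorous. Your two extra observations --- that $P_0\in\mathcal L(\mathcal H_{-s},\mathcal H_s)$ via (H2), and that $Q(\lambda-i0,W)=Q(\lambda+i0,W)^*$ so the $-i0$ limit also exists --- are exactly the details the paper leaves implicit.
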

   \begin{proof}
      The statement follows from Proposition \ref{P:ev} and \eqref{E:H - bar H}.
   \end{proof}
   In the case when the multiplicity of the eigenvalue $\lambda_0$ is $1$, we get a simple expression for the
   eigenvector of $H+W$ corresponding to $\lambda$.
   \begin{Prop}\label{P:eigenvector}
      Suppose that all conditions of Proposition \ref{P:ev} are satisfied and that $\lambda_0$ is a simple eigenvalue of $H$. Suppose that $\lambda$ is an eigenvalue of $H+W$ with $\lambda \in J_1$. Then $\lambda$ is a simple eigenvalue of $H+W$ and
      a corresponding eigenvector is given by
       \begin{equation}\label{E:eigenvector-expr}
         \psi = (\overline H+W-\lambda-i0)^{-1}\varphi_0.
      \end{equation}
   \end{Prop}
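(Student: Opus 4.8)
The plan is to prove that \emph{every} eigenvector $\psi'$ of $H+W$ at $\lambda$ is a nonzero scalar multiple of $\psi:=(\overline H+W-\lambda-i0)^{-1}\varphi_0$; this will simultaneously show that $\lambda$ is simple and that $\psi$ (which is then nonzero) is a corresponding eigenvector. Throughout I use that $\|W\|_{\mathcal L(\mathcal H_{-s},\mathcal H_s)}<\gamma$, so by the remark following Proposition \ref{Prop:uniform2} the operator $\overline H+W$ has purely absolutely continuous spectrum on $J_1$; in particular $\lambda\notin\sigma_{pp}(\overline H+W)$.

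First I would establish simplicity. Let $\psi'$ be an eigenvector of $H+W$ at $\lambda$. If $P_0\psi'=0$, then $\overline H\psi'=(H+P_0)\psi'=H\psi'$, hence $(\overline H+W)\psi'=\lambda\psi'$, contradicting $\lambda\notin\sigma_{pp}(\overline H+W)$. So $P_0\psi'=c\varphi_0$ with $c=\langle\psi',\varphi_0\rangle\ne0$ (using that $\lambda_0$ is simple, so $\Ran P_0=\mathbb C\varphi_0$). If $\psi'_1,\psi'_2$ are two eigenvectors with $P_0\psi'_j=c_j\varphi_0$, then $c_2\psi'_1-c_1\psi'_2$ is an eigenvector or zero annihilated by $P_0$, hence zero by the previous sentence; thus the eigenspace of $H+W$ at $\lambda$ is one-dimensional.

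Next, fix $\psi'$ with $P_0\psi'=c\varphi_0$, $c\ne0$. Since $\psi'\in\Dom(H)=\Dom(\overline H+W)$, one has $(\overline H+W-\lambda)\psi'=(H+W-\lambda)\psi'+P_0\psi'=c\varphi_0$, so writing $c\varphi_0=(\overline H+W-\lambda-i\epsilon)\psi'+i\epsilon\psi'$ and applying $(\overline H+W-\lambda-i\epsilon)^{-1}$ gives, for each $\epsilon>0$,
\[
\psi'=c\,(\overline H+W-\lambda-i\epsilon)^{-1}\varphi_0-i\epsilon\,(\overline H+W-\lambda-i\epsilon)^{-1}\psi'.
\]
Now let $\epsilon\downarrow0$. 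By $\text{(H3)}_0$ and $\varphi_0\in\cap_{s\ge0}\mathcal H_s\subset\mathcal H_s$, the first term converges to $c\psi$ in $\mathcal H_{-s}$. For the second term, the spectral theorem applied to $\overline H+W$ gives $\|\epsilon(\overline H+W-\lambda-i\epsilon)^{-1}\psi'\|_{\mathcal H}^2=\int_{\mathbb R}\frac{\epsilon^2}{(\mu-\lambda)^2+\epsilon^2}\,d\|E_\mu(\overline H+W)\psi'\|^2\to\|E_{\{\lambda\}}(\overline H+W)\psi'\|^2=0$, since $\lambda$ is not an eigenvalue of $\overline H+W$; hence the second term tends to $0$ in $\mathcal H$, a fortiori in $\mathcal H_{-s}$. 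As the left-hand side does not depend on $\epsilon$, we obtain $\psi'=c\psi$ in $\mathcal H_{-s}$. In particular $\psi=c^{-1}\psi'\in\mathcal H$ is a nonzero eigenvector of $H+W$ at $\lambda$, and together with the simplicity step this completes the argument.

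The one genuinely delicate point is that $\lambda$ is an \emph{embedded} eigenvalue of $H+W$, so $H+W-\lambda$ cannot be inverted and naive resolvent identities are unavailable; the device above sidesteps this by transferring the problem to $\overline H+W$ --- whose spectrum is purely absolutely continuous near $\lambda$ --- and then using the spectral theorem to kill the spurious term $i\epsilon(\overline H+W-\lambda-i\epsilon)^{-1}\psi'$ in the limit. The remaining subtleties --- that the limit defining $\psi$ exists only in the weak space $\mathcal H_{-s}$, and that simplicity must be argued separately --- are routine and handled as above.
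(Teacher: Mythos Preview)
Your proof is correct and takes a genuinely different route from the paper's. The paper argues via the spectral projection of $H+W$: using the identity \eqref{E:H - bar H} together with the Taylor expansion \eqref{E:Taylor} of $Q(z,W)$ (this is where $\text{(H3)}_1$ enters), it computes $\lim_{\epsilon\downarrow0}(-i\epsilon)(H+W-\lambda-i\epsilon)^{-1}f$ explicitly and sees that it is always a multiple of $\psi$. Your argument is more direct and more elementary: you start from an arbitrary eigenvector $\psi'$, observe the key algebraic fact $(\overline H+W-\lambda)\psi'=P_0\psi'=c\varphi_0$, and then pass to the limit in the resolvent identity for $\overline H+W$, killing the error term by the spectral theorem and the absence of point spectrum of $\overline H+W$ in $J_1$. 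A pleasant byproduct is that your argument uses only $\text{(H3)}_0$, not $\text{(H3)}_1$; the paper's approach, on the other hand, yields as a side result the explicit formula $E_{\{\lambda\}}(H+W)\varphi_0=d^{-1}\psi$ with $d=Q'_z(\lambda+i0,W)$, which is slightly more information though it is not used elsewhere.
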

   \begin{proof}
      We first prove that $\psi$ as defined by \eqref{E:eigenvector-expr} is nonzero.
      This follows from Proposition \ref{P:ev} since
      $$
        P_0 \psi = Q(\lambda+i0,W)\varphi_0=\varphi_0\ne 0.
      $$

      Next we use \eqref{E:Taylor}, and let the number $d=d(\lambda,W)$ be defined by $Q'_z(\lambda+i0,W) = d P_0$. Hence, by
      \eqref{E:H - bar H} and Proposition \ref{P:ev}, we have for $f\in \mathcal H_s$
      \begin{equation}\label{E:spectralproj}
         \begin{aligned}
            -i\epsilon (H+W&-\lambda-i\epsilon)^{-1} f = -i\epsilon (\overline H + W - \lambda -
            i\epsilon)^{-1}f \\
            &+ (d+o(1))^{-1} (\overline H + W -\lambda-i\epsilon)^{-1}
            P_0 (\overline H + W - \lambda-i\epsilon)^{-1}f
         \end{aligned}
      \end{equation}
      as $\epsilon\downarrow 0$. Choosing $f=\varphi_0$ and passing to the limit in $\mathcal H_{-s}$ as $\epsilon\downarrow 0$, it follows that $d\ne 0$, since otherwise the right hand side of \eqref{E:spectralproj}
      would blow up (using that $\psi\ne 0$ and $P_0\psi=\varphi_0$), while the left hand side tends to $E_{\{\lambda\}}(H+W)\varphi_0$. Indeed, we
      have
      \begin{equation}\label{E:eigenproj}
         E_{\{\lambda\}}(H+W)\varphi_0 = \frac{1}{d}(\overline H + W - \lambda -i0)^{-1} \varphi_0,
      \end{equation}
      which also shows that the right hand side of \eqref{E:eigenproj} belongs to
      $\mathcal H$.
      Multiplying by $d$ yields the expression
      \eqref{E:eigenvector-expr}.

      Finally, by \eqref{E:spectralproj} for a general $f\in \mathcal H_s$, and passing to the limit as $\epsilon\downarrow 0$, the right hand side is always a multiple of $\psi$, and so $\lambda$ is a simple eigenvalue of $H+W$.
   \end{proof}
   \begin{Prop}\label{P:differentiable}
      Suppose that (H1), (H2), and $\text{(H3)}_k$ are satisfied, where $k\ge 0$, and let $s>s_1$, where $s_1$ is defined in $\text{(H3)}_k$. Then for some neighborhood $J$ of $\lambda_0$ and some neighborhood $\mathcal O$ of $0 \in X_s$,
      $(\overline H + W - \lambda - i0)^{-1}: J \times \mathcal O \to \mathcal L(\mathcal H_s,\mathcal H_{-s})$ is $C^k$ as a function of $\lambda$ and
      $W$.
   \end{Prop}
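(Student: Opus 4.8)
The plan is to obtain everything from the resolvent factorization \eqref{E:pre-boundary}, pushed to the real axis, and then to read off joint $C^k$ regularity from the regularity of the two factors on its right-hand side. The point is that the $W$-dependence enters only through a bounded bilinear operation (operator multiplication) followed by operator inversion, both of which are real-analytic, so no new smoothness in $W$ has to be extracted; all of the genuine analytic input is the $C^k$ dependence on $\lambda$ supplied by $\text{(H3)}_k$.

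First I would fix $s>s_1$ and choose a compact subinterval $J_1\subset(\lambda_0-\delta_1,\lambda_0+\delta_1)$ with $\lambda_0$ in its interior $J$. By the bound established inside the proof of Lemma \ref{Lem:uniform} (namely $\|(\overline H-z)^{-1}\|_{\mathcal L(\mathcal H_s,\mathcal H_{-s})}\le C_{J_1}$ for $z\in\overline{Q_{J_1}}$), the boundary values satisfy $\sup_{\lambda\in J_1}\|(\overline H-\lambda-i0)^{-1}\|_{\mathcal L(\mathcal H_s,\mathcal H_{-s})}=:M<\infty$. Using the continuous inclusion $X_s\hookrightarrow\mathcal L(\mathcal H_{-s},\mathcal H_s)$, I would then take $\mathcal O=\{W\in X_s:\|W\|_{X_s}<r\}$ with $r$ small enough that $M\|W\|_{\mathcal L(\mathcal H_{-s},\mathcal H_s)}<1$ on $\mathcal O$. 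Since $(\overline H-\lambda-i0)^{-1}\in\mathcal L(\mathcal H_s,\mathcal H_{-s})$ while $W\in\mathcal L(\mathcal H_{-s},\mathcal H_s)$, the product $W(\overline H-\lambda-i0)^{-1}$ lies in $\mathcal L(\mathcal H_s,\mathcal H_s)$ with norm $<1$, so $\Phi(\lambda,W):=I+W(\overline H-\lambda-i0)^{-1}$ is boundedly invertible on $\mathcal H_s$ throughout $J\times\mathcal O$. Letting $\epsilon\downarrow 0$ in \eqref{E:pre-boundary} with $z=\lambda+i\epsilon$ --- the right-hand side converges in $\mathcal L(\mathcal H_s,\mathcal H_{-s})$ because $(\overline H-z)^{-1}\to(\overline H-\lambda-i0)^{-1}$ there, hence $W(\overline H-z)^{-1}\to W(\overline H-\lambda-i0)^{-1}$ in $\mathcal L(\mathcal H_s,\mathcal H_s)$ with inversion continuous, while the left-hand side converges by Proposition \ref{Prop:uniform2} --- I obtain the identity $(\overline H+W-\lambda-i0)^{-1}=(\overline H-\lambda-i0)^{-1}\,\Phi(\lambda,W)^{-1}$ on $J\times\mathcal O$.

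Given that identity, the regularity argument is short. The factor $\lambda\mapsto(\overline H-\lambda-i0)^{-1}\in\mathcal L(\mathcal H_s,\mathcal H_{-s})$ is $C^k$ by $\text{(H3)}_k$ and constant in $W$; the map $(\lambda,W)\mapsto W(\overline H-\lambda-i0)^{-1}$ factors as the bounded bilinear multiplication $X_s\times\mathcal L(\mathcal H_s,\mathcal H_{-s})\to\mathcal L(\mathcal H_s,\mathcal H_s)$ composed with $(\lambda,W)\mapsto\big(W,(\overline H-\lambda-i0)^{-1}\big)$, the latter being $C^k$, so it is $C^k$; therefore $\Phi$ is $C^k$, and since inversion of invertible elements of the Banach algebra $\mathcal L(\mathcal H_s,\mathcal H_s)$ is $C^\infty$, so is $(\lambda,W)\mapsto\Phi(\lambda,W)^{-1}$. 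One final bounded bilinear multiplication $\mathcal L(\mathcal H_s,\mathcal H_{-s})\times\mathcal L(\mathcal H_s,\mathcal H_s)\to\mathcal L(\mathcal H_s,\mathcal H_{-s})$ then shows that $(\overline H+W-\lambda-i0)^{-1}$ is $C^k$ in $(\lambda,W)$ on $J\times\mathcal O$.

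The step I expect to need the most care is not any estimate but the clean formulation and use of ``jointly $C^k$'': I would record once and for all that if $f$ is $C^k$ from an open real interval into a Banach space $E$, $g$ is $C^\infty$ from an open subset of a Banach space into a Banach space $F$, and $b:E\times F\to G$ is bounded bilinear, then $(\lambda,W)\mapsto b(f(\lambda),g(W))$ is $C^k$ on the product, and apply this together with the real-analyticity of operator inversion to each composition above. The remaining bookkeeping --- that $\delta_1$ and the uniform bound $M$ may depend on the now-fixed $s$, and that the continuity of $X_s\hookrightarrow\mathcal L(\mathcal H_{-s},\mathcal H_s)$ turns smallness in $X_s$ into convergence of the Neumann series --- is routine.
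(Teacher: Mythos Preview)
Your argument is correct and is in fact cleaner than the paper's own proof. Both proofs rest on the factorization \eqref{E:pre-boundary} pushed to the boundary, but the paper then proceeds more laboriously: it computes the partial Fr\'echet derivative $R'_W(\lambda,W)\widetilde W=-R(\lambda,W)\widetilde W R(\lambda,W)$ by hand, proves by induction that $R$ is $C^\infty$ in $W$ with the $j$th derivative an alternating product of resolvents and perturbations, invokes Proposition~\ref{Prop:uniform2} for the $\lambda$-derivatives, and then has to argue separately that the mixed partials exist in every order---first establishing them in the order ``$W$ then $\lambda$'' and then quoting a multivariable calculus lemma (from \cite{lB55}) to interchange, finally passing from continuous Gateaux derivatives to Fr\'echet. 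You bypass all of this by observing that the right-hand side of the boundary factorization is built from the $C^k$ map $\lambda\mapsto(\overline H-\lambda-i0)^{-1}$, the identity $W\mapsto W$, bounded bilinear operator multiplication, and inversion in a Banach algebra, so joint $C^k$ follows from the chain rule without ever separating variables.

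The only place to tighten the writeup is that your ``once and for all'' lemma is stated for $b(f(\lambda),g(W))$ with separated variables, whereas the last multiplication $b\big((\overline H-\lambda-i0)^{-1},\Phi(\lambda,W)^{-1}\big)$ has a second argument depending on both $\lambda$ and $W$. Of course nothing is lost: the correct general statement is simply that a bounded bilinear map is $C^\infty$, and the composition of a $C^\infty$ map with a $C^k$ map between Banach spaces is $C^k$; this covers every step. What your approach buys is economy and a direct route to joint Fr\'echet differentiability; what the paper's approach buys is an explicit formula for the $W$-derivatives, which it uses later (e.g.\ in Proposition~\ref{P:A-equation} and the proof of Theorem~\ref{T:main}) to compute $A'_W(\lambda_0,0)$ and $F'_j(0)$.
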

   \begin{proof}
      We first compute the partial Fr\'echet derivative of $R(\lambda,W):=(\overline H + W - \lambda - i0)^{-1}$
      with respect to $W$. We will see that it is given by
      \begin{equation}\label{E:Q-der-W}
          R'_W(\lambda,W)\widetilde W= -R(\lambda,W) \widetilde W R(\lambda,W).
      \end{equation}
      Indeed, by the resolvent equation we have
      \begin{equation}\label{E:differentiable}
         \begin{aligned}
            \|R(\lambda,W +\widetilde W) &- R(\lambda,W) + R(\lambda,W)
            \widetilde W R(\lambda,W)\|_{\mathcal L(\mathcal H_s,\mathcal H_{-s})}   \\
            =&\| - R(\lambda,W+\widetilde W)\widetilde W R(\lambda,W)
            + R(\lambda,W)
            \widetilde W R(\lambda,W)\|_{\mathcal L(\mathcal H_s,\mathcal H_{-s})} \\
            =& \| R(\lambda,W+\widetilde W)\widetilde W R(\lambda,W) \widetilde W R(\lambda,W)\|_{\mathcal L(\mathcal H_s,\mathcal H_{-s})} \\
            \le& \|R(\lambda,W+\widetilde W)\|_{\mathcal L(\mathcal H_s,\mathcal H_{-s})}\|R(\lambda,W)\|_{\mathcal L(\mathcal H_s,\mathcal H_{-s})}^2
            \|\widetilde W\|_{\mathcal L(\mathcal H_{-s},\mathcal H_s)}^2.
         \end{aligned}
      \end{equation}
      Since
      \begin{equation*}
         R(\lambda,W) = R(\lambda,W+\widetilde W)(I + \widetilde W R(\lambda,W)),
      \end{equation*}
      and since  $R(\lambda,W)$ is bounded from $\mathcal H_s$ to $\mathcal H_{-s}$, we have for
      $\|\widetilde W\|_{\mathcal L(\mathcal H_{-s},\mathcal H_s)}$ small
      \begin{equation*}
         R(\lambda,W+\widetilde W) = R(\lambda,W)
         ( I + \widetilde W R(\lambda,W))^{-1},
      \end{equation*}
      so that
      \begin{equation*}
         \|R(\lambda,W+\widetilde W)\|_{\mathcal L(\mathcal H_s,\mathcal H_{-s})} \le
         \|R(\lambda,W)\|_{\mathcal L(\mathcal H_s,\mathcal H_{-s})}
         \bigl( 1 - \|\widetilde W\|_{\mathcal L(\mathcal H_{-s},\mathcal H_{s})}
         \|R(\lambda,W)\|_{\mathcal L(\mathcal H_s,\mathcal H_{-s})}\bigr)^{-1},
      \end{equation*}
      and so $\|R(\lambda,W+\widetilde W)\|_{\mathcal L(\mathcal H_s,\mathcal H_{-s})}$ is uniformly bounded with respect to $\widetilde W$, for
      $\widetilde W$ small. This proves \eqref{E:Q-der-W}.
      By induction in \eqref{E:differentiable} it follows that $R$ is $C^\infty$ in the $W$ variable, and that
      $R^{(j)}_W(\lambda,W)$ is a multilinear map such that
      $R^{(j)}_W(\lambda,W)(\widetilde W_1,\dots,\widetilde W_j)$ is
      of the form $M(\lambda,W;\widetilde W_1,\dots,\widetilde W_j)$, where $M(\lambda,W;\widetilde W_1,\dots,\widetilde W_j)$ is a sum of products with
      $2j +1$ factors where every second factor is $(\overline H + W - \lambda - i0)^{-1}$ and every second factor is $\widetilde W_l$ for some
      $l\in \{1,\dots,j\}$.

      By Proposition \ref{Prop:uniform2} the derivatives
      $\displaystyle{\frac{\partial^j}{\partial \lambda^j}} R(\lambda,W)$ exist for $j\le k$,
      and  since the terms of $R^{(j)}_W(\lambda,W;\widetilde W_1,\dots,\widetilde W_j)$ are  compositions of resolvents and
      $\widetilde W_l$, it follows from the product rule that the mixed derivatives exist and are continuous up to order $k$ in
      $\lambda$ when we apply the $W$-derivatives first and then the $\lambda$-derivatives.

      To prove that the partial derivatives taken in another order exist and are continuous, we will use a corresponding result from calculus.
      Let $f$ be a function of $x_1,\dots, x_n$ such that all the mixed partial derivatives up to order $m$ exist and are continuous when the
      partial derivatives are taken in the order of increasing index of the variables.
      Hence, we assume that
      $\partial^r f/\partial x_{l_1}\dots\partial x_{l_r}$  exist and are continuous for every $r\le m$ and every $l_1,\dots, l_r$ such that $l_1 \le l_2\le \dots \le l_r$.
      Then $f\in C^m(\mathbb R^n)$. The proof when $m=2$ follows from Theorem 1 of \cite[p.163]{lB55}, and  the
      general case follows by induction on the order of the derivative.

      Let $1\le m\le k$. The $m$th Gateaux derivative  of $R$ (if it exists) is given by
      \begin{equation}\label{E:Gateaux}
         \frac{\partial}{\partial t_1}\dots\frac{\partial}{\partial t_m} \left. R\biggl((\lambda,W) + \sum_{l=1}^m t_l
         (\widetilde \lambda_l,\widetilde W_l))\biggr)\right|_{
         \begin{smallmatrix}
            t_l=0 \\
            l=1,\dots,m
         \end{smallmatrix}}.
      \end{equation}
      Let $0\le r\le m$ be arbitrary.
      By choosing $\widetilde \lambda_l=0$ for $l=0,\dots, r$ and $\widetilde W_l=0$ for $l=r+1,\dots,m$, we get from \eqref{E:Gateaux} the mixed
      partial derivative where we first differentiate $l-r$ times with respect to $W$ and then $r$ times with respect to $\lambda$. This is
      the derivative which we know exists and is continuous for $|\lambda-\lambda_0|$, $\|W\|_{X}$, $s_l$, $t_l$ small.
      By the calculus result quoted above, we may change the order of
      differentiation in \eqref{E:Gateaux}, and we see that all
      the mixed partial derivatives of total order $m$ of $R$ exist and are continuous. To conclude that $R$ is $k$ times Gateaux
      differentiable,
      we let $g$ be the function
      \begin{equation*}
          g(s_1,\dots,s_m,t_1,\dots,t_m) := R\biggl((\lambda,W) + \sum_{l=1}^m \left(s_l (\widetilde \lambda_l,0) + t_l
          (0,\widetilde W_l)\right)\biggr),
      \end{equation*}
      for an arbitrary choice of $\widetilde \lambda_l$ and $\widetilde W_l$, $l=1,\dots,m$.
      Note that $g$ has continuous partial derivatives of order $m$. Let
      $h(t_1,\dots,t_m):= g(t_1,\dots,t_m,t_1,\dots,t_m)$ and note that by \eqref{E:Gateaux} that the $m$th order Gateaux derivative of $R$ is just a
      mixed partial derivative of $h$.
      We obtain from the chain rule that
      $h$ is $m$ times continuously differentiable, and so
      \eqref{E:Gateaux} holds for any choice of
      $\widetilde \lambda_l$,  $\widetilde W_l$, $l=1,\dots,m$, and since $m\in\{1,\dots,k\}$ was arbitrary we see that all the Gateaux
      derivatives  up to order $k$ are continuous with respect to $\lambda$ and $W$ in a neighborhood of $(\lambda_0,0)$ and multilinear.
      By \cite[p. 73]{mB77}, $R$ is also $k$ times continuously Fr\'echet differentiable in this neighborhood.
   \end{proof}

   \section{The equation $Q(\lambda + i0,W)f = f$}\label{S:Q}\noindent
   Proposition \ref{P:ev} leads us to the study of the equation
   \begin{equation}\label{E:Q-eq}
      Q(\lambda + i 0,W)f=f.
   \end{equation}
   If \eqref{E:Q-eq} holds, then
   \begin{equation*}
      \langle f,f\rangle = \langle Q(\lambda + i0, W) f, f\rangle = \langle f,Q(\lambda - i0, W)f\rangle = \langle Q(\lambda - i0,W) f, f\rangle.
   \end{equation*}
    By \eqref{E:delta-def}
    it follows that $\langle \delta (\overline H + W - \lambda)f,f\rangle = 0$.

   Note that $\langle \delta(\overline H + W - \lambda) f, g\rangle$ defines a sesquilinear form on $\mathcal H_s$, for which the Schwarz inequality holds.
   Hence, for every $g\in \mathcal H_s$
   \begin{equation*}
      |\langle \delta(\overline H + W - \lambda) f,g\rangle|^2 \le \langle \delta(\overline H + W - \lambda)f,f\rangle
      \langle \delta(\overline H + W - \lambda) g, g\rangle =0.
   \end{equation*}
   It  follows that  \eqref{E:Q-eq} is equivalent to
   \begin{equation}\label{E:deltaA-equation}
      \begin{aligned}
         \delta(\overline H + W - \lambda) f &= 0, \\
         A(\lambda,W)f&=f,
      \end{aligned}
   \end{equation}
   where $A(\lambda,W)$ is given by \eqref{E:A-def}.

   We first study the second equation of \eqref{E:deltaA-equation}
   for  $f\in \text{Ran }P_0$. We focus on the non-degenerate case, i.e. we assume that $\lambda_0$ has multiplicity $1$.
   \begin{Prop}\label{P:A-equation}
      Suppose that (H1), (H2), and $\text{(H3)}_k$ with $k \ge 1$ are satisfied, and that the eigenvalue $\lambda_0$  has multiplicity $1$ . Suppose $s > s_1$.  Then the second equation of \eqref{E:deltaA-equation} defines $\lambda = \lambda(W)$ in a neighborhood of
      $(\lambda,W) = (\lambda_0,0)\in \mathbb R\times X_s$. Moreover, $\lambda(\cdot)$ is a $C^k$ function and
      $\lambda'(0)\widetilde W = \langle \varphi_0,\widetilde W\varphi_0\rangle$.
   \end{Prop}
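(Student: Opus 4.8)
The plan is to apply the implicit function theorem in Banach spaces to the scalar equation coming from the second line of \eqref{E:deltaA-equation}. Since $\lambda_0$ has multiplicity $1$, $\Ran P_0$ is one-dimensional, spanned by the real normalized eigenvector $\varphi_0$, and every $f\in\Ran P_0$ is a scalar multiple of $\varphi_0$; thus $A(\lambda,W)$ restricted to $\Ran P_0$ is multiplication by the scalar $a(\lambda,W):=\langle A(\lambda,W)\varphi_0,\varphi_0\rangle$, and the equation $A(\lambda,W)f=f$ for $0\ne f\in\Ran P_0$ is equivalent to the scalar equation
\begin{equation*}
   \Phi(\lambda,W):=\langle A(\lambda,W)\varphi_0,\varphi_0\rangle - 1 = 0 .
\end{equation*}
First I would record that $\Phi$ is a $C^k$ function of $(\lambda,W)$ on a neighborhood of $(\lambda_0,0)$ in $\mathbb R\times X_s$: this follows from Proposition \ref{P:differentiable} applied to the two boundary-value resolvents $(\overline H+W-\lambda\mp i0)^{-1}$ (of which $A$ is the half-sum by \eqref{E:A-def}), composed with the bounded maps $g\mapsto\langle g,\varphi_0\rangle$ and $W\mapsto W\varphi_0$, using $\varphi_0\in\mathcal H_s$ by (H2) and the continuous inclusion $X_s\subset\mathcal L(\mathcal H_{-s},\mathcal H_s)$.

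Next I would check the two hypotheses of the implicit function theorem. For the base point: at $W=0$ we have $\overline H=H+P_0$, so $(\overline H-\lambda_0\mp i0)^{-1}\varphi_0=(1-\lambda_0+\lambda_0\mp i\cdot 0)^{-1}$ — more carefully, $(H+P_0)\varphi_0=(\lambda_0+1)\varphi_0$, and since $\overline H$ has no spectrum near $\lambda_0$ (Proposition \ref{P:H bar H}(ii)) the resolvent at $\lambda_0$ is an ordinary bounded resolvent with $(\overline H-\lambda_0)^{-1}\varphi_0=\varphi_0$, whence $Q(\lambda_0\pm i0,0)\varphi_0=\varphi_0$ and $A(\lambda_0,0)\varphi_0=\varphi_0$, so $\Phi(\lambda_0,0)=0$. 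For the transversality in $\lambda$: differentiate $\langle(\overline H-\lambda)^{-1}\varphi_0,\varphi_0\rangle=(1-0)^{-1}\langle\varphi_0,\varphi_0\rangle$-type expression — concretely, $\partial_\lambda\langle(\overline H-\lambda)^{-1}\varphi_0,\varphi_0\rangle=\langle(\overline H-\lambda)^{-2}\varphi_0,\varphi_0\rangle=\|(\overline H-\lambda)^{-1}\varphi_0\|^2$, which at $\lambda=\lambda_0$, $W=0$ equals $\|\varphi_0\|^2=1\ne0$; the one-sided derivative identity $-i\,\partial_y F|_{y=0}=F'$ from the proof of Lemma \ref{Lem:uniform} guarantees this $\lambda$-derivative of the boundary value agrees with the naive one at the base point. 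Hence $\partial_\lambda\Phi(\lambda_0,0)=1\ne0$, and the implicit function theorem yields a $C^k$ function $\lambda(W)$ with $\lambda(0)=\lambda_0$ solving $\Phi(\lambda(W),W)=0$ near $(\lambda_0,0)$.

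Finally I would compute $\lambda'(0)$ by implicit differentiation: $\partial_\lambda\Phi(\lambda_0,0)\,\lambda'(0)\widetilde W + \Phi'_W(\lambda_0,0)\widetilde W = 0$. Using \eqref{E:Q-der-W}, the $W$-derivative of $A(\lambda,W)$ at $(\lambda_0,0)$ in direction $\widetilde W$ is $-\tfrac12\big((\overline H-\lambda_0-i0)^{-1}\widetilde W(\overline H-\lambda_0-i0)^{-1}+(\overline H-\lambda_0+i0)^{-1}\widetilde W(\overline H-\lambda_0+i0)^{-1}\big)$, and pairing with $\varphi_0$ on both sides and using $(\overline H-\lambda_0\mp i0)^{-1}\varphi_0=\varphi_0$ together with self-adjointness gives $\Phi'_W(\lambda_0,0)\widetilde W=-\langle\widetilde W\varphi_0,\varphi_0\rangle=-\langle\varphi_0,\widetilde W\varphi_0\rangle$ (real since $\widetilde W$ is real and $\varphi_0$ is real). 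Combined with $\partial_\lambda\Phi(\lambda_0,0)=1$ this gives $\lambda'(0)\widetilde W=\langle\varphi_0,\widetilde W\varphi_0\rangle$, as claimed. The main obstacle I anticipate is the care needed at the base point: one must justify that the boundary-value resolvent $(\overline H-\lambda\mp i0)^{-1}$ and its $\lambda$-derivative at $(\lambda_0,0)$ coincide with the ordinary analytic resolvent and its derivative — this rests on Proposition \ref{P:H bar H}(ii) (no spectrum near $\lambda_0$) and the one-sided derivative formula established inside the proof of Lemma \ref{Lem:uniform} — and that the regularity of $A$ as a map into $\mathcal L(\mathcal H_s,\mathcal H_{-s})$ survives composition with the pairing against $\varphi_0\in\mathcal H_s$, which is where (H2) is used.
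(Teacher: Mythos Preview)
Your approach is essentially the paper's: reduce to the scalar function $a(\lambda,W)=\langle A(\lambda,W)\varphi_0,\varphi_0\rangle$, invoke Proposition~\ref{P:differentiable} for $C^k$ regularity, check $a(\lambda_0,0)=1$ and $\partial_\lambda a(\lambda_0,0)=1$, apply the implicit function theorem, and differentiate implicitly using \eqref{E:Q-der-W}. Two small points to clean up: first, your statement that ``$\overline H$ has no spectrum near $\lambda_0$'' is false---$\lambda_0$ lies in $\sigma_c(\overline H)=\sigma_c(H)$, so $(\overline H-\lambda_0)^{-1}$ is \emph{not} an ordinary bounded resolvent; what you actually use (and what the paper exploits by computing $A(\lambda,0)=(\lambda_0+1-\lambda)^{-1}$ explicitly) is that $\varphi_0$ is an eigenvector of $\overline H$ with eigenvalue $\lambda_0+1$, so $(\overline H-\lambda\mp i\epsilon)^{-1}\varphi_0=(\lambda_0+1-\lambda\mp i\epsilon)^{-1}\varphi_0$ for every $\epsilon\ge 0$. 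Second, to apply the real implicit function theorem you need $\Phi:\mathbb R\times X_s\to\mathbb R$, not $\mathbb C$; the paper verifies this via $((\overline H+W-\lambda-i\epsilon)^{-1})^*=(\overline H+W-\lambda+i\epsilon)^{-1}$, which makes the diagonal matrix element of $A$ real---you should state this rather than leave it implicit.
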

   \begin{proof}
      It is  natural to identify
      the operator $A(\lambda, W)$ with the function $\langle \varphi_0,A(\lambda,W)\varphi_0\rangle$, where
      $P_0 \varphi_0=\varphi_0$ and $\|\varphi_0\|_{\mathcal H}=1$. We then have
      \begin{equation}\label{E:A-formula}
         A(\lambda,W) = \frac{1}{2} \langle \varphi_0,\left((\overline H + W - \lambda - i0)^{-1} + (\overline H + W - \lambda + i0)^{-1}\right) \varphi_0\rangle.
      \end{equation}
      Since by Proposition \ref{P:differentiable},
      $Q\in C^k(J\times \mathcal O;\mathbb C)$ where $J\times \mathcal O$ is a neighborhood of $(\lambda_0, 0)$ in $\mathbb R \times X_s$ it follows that also $A\in C^k(J\times \mathcal O;\mathbb C)$. By self-adjointness of $H$ and $W$, we have
      for every $\epsilon>0$ that
      \begin{equation*}
         \left((\overline H + W - \lambda - i\epsilon)^{-1}\right)^* = (\overline H + W - \lambda + i\epsilon)^{-1}.
      \end{equation*}
      It follows that
      $A(\lambda,W) = \overline{A(\lambda,W)}$, and so
      $A\in C^k(J\times \mathcal O;\mathbb R)$.

      By \eqref{E:A-formula} and since $\varphi_0$ is an eigenvector of
      $H$ with eigenvalue $\lambda_0$,
      \begin{equation*}
         A(\lambda,0) = \frac{1}{\lambda_0 + 1 - \lambda} =: c(\lambda).
      \end{equation*}
      Observing that
      $c'(\lambda)=1/(\lambda_0 + 1 - \lambda)^2$, we see that
      \begin{equation*}
          A'_\lambda(\lambda_0,0)=1,
      \end{equation*}
      and $A(\lambda_0,0)=1$. By the implicit function theorem $A(\lambda,W)=1$ defines $\lambda$ as a $C^k$ function of $W$ in a neighborhood of
      $\lambda=\lambda_0$ and for $W\in X_s$ small.

      By \eqref{E:Q-der-W},
      and since $\varphi_0$ is an eigenvector of $H$ with eigenvalue $\lambda_0$,
      we obtain
      \begin{equation*}
         A'_W(\lambda_0,0) W = -\langle \varphi_0,W\varphi_0\rangle.
      \end{equation*}
      Since $A'_\lambda(\lambda_0,0) = 1$ it follows that
      $\lambda'(0)W = \langle \varphi_0,W\varphi_0\rangle$.
   \end{proof}

  \begin{Prop}\label{P:pert}
   Suppose that (H1), (H2) and $\text{(H3)}_0$ are satisfied and $J_1$, $\gamma$, $s_1$, and $\delta_1$ are as in Proposition \ref{Prop:uniform2}.  Then if $s > s_1$, $\|W\|_{\mathcal  L(\mathcal H_{-s},\mathcal H_{s})} < \gamma$,  and $\lambda\in J_1$,
   we then have the following perturbation formula for $\delta(\overline H + W - \lambda)$:
      \begin{equation*}
         \delta(\overline H + W - \lambda) = \left( I - (\overline H + W - \lambda - i 0)^{-1}W\right)\delta(\overline H
         - \lambda)\left(I - W(\overline H + W - \lambda+ i 0)^{-1}\right).
      \end{equation*}
   \end{Prop}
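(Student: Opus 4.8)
The plan is to prove the identity first for the off-axis resolvents $(\overline H + W - z)^{-1}$, $z = \lambda + i\epsilon$ with $\epsilon > 0$ --- where everything in sight is a genuine bounded operator on $\mathcal H$ and the content is purely algebraic --- and then to pass to the boundary values as $\epsilon \downarrow 0$, whose existence is guaranteed by $\text{(H3)}_0$ and Proposition \ref{Prop:uniform2}.

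For the algebraic core, fix $z = \lambda + i\epsilon$ with $\epsilon > 0$ and abbreviate $A = (\overline H + W - z)^{-1}$, $B = (\overline H + W - \bar z)^{-1}$, $a = (\overline H - z)^{-1}$, $b = (\overline H - \bar z)^{-1}$, $L = I - AW$ and $M = I - WB$; these are bounded on $\mathcal H$, since $W$ is a bounded self-adjoint operator, so $\overline H + W$ is self-adjoint on $\Dom H$ and $z, \bar z \notin \mathbb R$ lie in its resolvent set. Writing $\overline H + W - z = (\overline H - z) + W$ and $\overline H + W - \bar z = (\overline H - \bar z) + W$, the two standard forms of the second resolvent identity give $A = (I - AW)a = La$ and $B = b(I - WB) = bM$. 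Consequently
\begin{equation*}
   L(a - b)M = LaM - LbM = AM - LB = (A - AWB) - (B - AWB) = A - B .
\end{equation*}
Dividing by $2\pi i$, this is exactly the asserted formula with $\pm i0$ replaced by $\pm i\epsilon$: the left factor carries the lower-half-plane resolvent and the right factor the upper-half-plane resolvent, consistently with \eqref{E:delta-def} and with the definition of $\delta(\overline H - \lambda)$.

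It then remains to take $\epsilon \downarrow 0$. Since $W \in X_s \subset \mathcal L(\mathcal H_{-s}, \mathcal H_s)$ with $\|W\|_{\mathcal L(\mathcal H_{-s}, \mathcal H_s)} < \gamma$ and $\lambda \in J_1$, Proposition \ref{Prop:uniform2} (with $k = 0$) provides the limits $A \to (\overline H + W - \lambda - i0)^{-1}$ and $B \to (\overline H + W - \lambda + i0)^{-1}$ in $\mathcal L(\mathcal H_s, \mathcal H_{-s})$, while $\text{(H3)}_0$ gives $\tfrac{1}{2\pi i}(a - b) \to \delta(\overline H - \lambda)$ in $\mathcal L(\mathcal H_s, \mathcal H_{-s})$. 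Composing with the fixed bounded map $W \colon \mathcal H_{-s} \to \mathcal H_s$, one obtains $AW \to (\overline H + W - \lambda - i0)^{-1}W$ in $\mathcal L(\mathcal H_{-s}, \mathcal H_{-s})$ and $WB \to W(\overline H + W - \lambda + i0)^{-1}$ in $\mathcal L(\mathcal H_s, \mathcal H_s)$, hence $L$ and $M$ converge in $\mathcal L(\mathcal H_{-s}, \mathcal H_{-s})$ and $\mathcal L(\mathcal H_s, \mathcal H_s)$ respectively. The triple product $L \cdot \tfrac{1}{2\pi i}(a - b) \cdot M$ therefore converges in $\mathcal L(\mathcal H_s, \mathcal H_{-s})$ to the right-hand side of the claimed formula; in particular the limit in \eqref{E:delta-def} defining $\delta(\overline H + W - \lambda)$ exists and equals it.

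There is no genuine obstacle here: the whole proof is the two resolvent identities plus the one-line cancellation $L(a - b)M = A - B$. The only point requiring care is the bookkeeping of spaces in the limit, and this works out precisely because $W$ maps $\mathcal H_{-s}$ into $\mathcal H_s$, so that $AW$ and $WB$ are legitimate operators on $\mathcal H_{-s}$ and on $\mathcal H_s$ depending norm-continuously on the boundary values of the resolvents.
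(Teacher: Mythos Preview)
Your proof is correct and follows essentially the same route as the paper's: establish the identity for $\epsilon > 0$ via the second resolvent equation and then pass to the boundary using $\text{(H3)}_0$ and Proposition \ref{Prop:uniform2}. The only cosmetic difference is that the paper uses the product representation $\delta(\cdot) = \lim_{\epsilon\downarrow 0}\tfrac{\epsilon}{\pi}(\cdot - z)^{-1}(\cdot - \bar z)^{-1}$ from \eqref{E:delta-def}, into which the factored form drops out immediately upon substituting $A = La$ and $B = bM$, whereas you work with the difference form and need the one-line cancellation $L(a-b)M = A - B$.
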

   \begin{proof}
      We have
      \begin{equation*}
         \begin{aligned}
            (\overline H + W - \lambda-i\epsilon)^{-1} &= \left(I - (\overline H  + W - \lambda-i\epsilon)^{-1} W\right)(\overline H - \lambda-i\epsilon)^{-1}, \\
            (\overline H + W - \lambda+i\epsilon)^{-1} &= \left(\overline H - \lambda+i\epsilon\right)^{-1}\left(I - W(\overline H + W - \lambda+i\epsilon\right)^{-1}),
         \end{aligned}
      \end{equation*}
      which imply
      \begin{equation*}
         \begin{aligned}
            &(\overline H + W - \lambda-i\epsilon)^{-1} \frac{\epsilon}{\pi}(\overline H + W - \lambda+i\epsilon)^{-1} \\
            &= \left( I - (\overline H + W - \lambda-i\epsilon)^{-1} W\right)(\overline H -
            \lambda-i\epsilon)^{-1} \frac{\epsilon}{\pi} (\overline H - \lambda+i\epsilon)^{-1} \left(I - W(\overline H + W - \lambda+i\epsilon)^{-1}\right).
         \end{aligned}
      \end{equation*}
      By \eqref{E:delta-def}, this proves the proposition.
   \end{proof}

\section{Finite multiplicity of the continuous spectrum}\label{S:finmult}\noindent
\begin{proof}[Proof of Theorem \ref{T:main}]
By $\text{(H3)}_k$, $\Ran \delta(\overline H - \lambda)\subset \mathcal H_{-s}$ for $s>s_1$. By (H4), $\Ran \delta(\overline H - \lambda_0)$ is $m$-dimensional. Let
$\varphi_1, \dots, \varphi_m\in \mathcal H_s$  and $f_1,\dots,f_m\in \mathcal H_{-s}$ be linearly independent and satisfy
\begin{equation}\label{E:varphi_j f_j}
   \delta(\overline H - \lambda_0)\varphi_j = f_j.
\end{equation}
We may without loss of generality assume that $f_j$ for $j=1,\dots, m$ are real. Indeed, suppose that there are only $j\le m-1$ real linearly independent vectors
$f_1,\dots,f_j\in \Ran \delta(\overline H - \lambda_0)$. Since $\Ran \delta(\overline H - \lambda_0)$ is $m$-dimensional, we can choose
$f\in \Ran \delta(\overline H - \lambda_0)$ such that $f_1,\dots,f_j,f$ are linearly independent. Let $\Re f= (f + Cf)/2$ and $\Im f=(f-Cf)/2i$ so that $f = \Re f + i \Im f$.
It is not possible that both $\Im f$ and $\Re f$  are linear combinations of $f_1,\dots,f_j$, since if they are then so is $f$. Hence one of $\Im f$ and $\Re f$ is not a linear
combination of $f_1,\dots, f_j$, say $\Re f$. But then there are $j+1$ real linearly independent vectors that span $\Ran \delta(\overline H - \lambda_0)$, contradicting our
assumption that only $j$ such vectors exist.

For $W$ in a sufficiently small neighborhood of $0\in X_s$, let
\begin{equation*}
   f_j(W) :=  \delta(\overline H + W - \lambda(W))\varphi_j,
\end{equation*}
where $\lambda(W)$ is defined as in Proposition \ref{P:A-equation}. Note that $f_j(0) = f_j$. By Proposition
\ref{P:differentiable}, $f_j(\cdot)\in C^k(X_s;\mathcal H_{-s})$. Note
 that
$(I-W(\overline H + W - \lambda + i0)^{-1}): \mathcal H_s\to \mathcal H_s$ and $(I-(\overline H + W - \lambda -i0)^{-1}W):\mathcal H_{-s}\to\mathcal H_{-s}$ are invertible. Indeed, the inverses are given by $(I + W(\overline H -\lambda + i0)^{-1})$ and $(I + (\overline H - \lambda - i0)^{-1})$, respectively. Then by Proposition \ref{P:pert} and (H4), $\{f_j(W);j=1,\dots,
m\}$ span the $m$-dimensional subspace $\Ran \delta(\overline H + W - \lambda(W))\subset \mathcal H_{-s}$ if
$\|W\|_{\mathcal L(\mathcal H_{-s},\mathcal H_s)}$ is sufficiently small. Let $g_l\in \mathcal H_s$, $l=1,\dots,m$ be
such that
\begin{equation}\label{E:dual basis}
   \langle f_j(0),g_l\rangle = \delta_{jl}
\end{equation}
for $j, l\in \{1,\dots, m\}$. Note that we may assume that also the $g_l$ are real, since
\begin{equation*}
   \langle f_j(0),C g_l\rangle = \langle C f_j,C g_l\rangle = \overline{\langle f_j, g_l\rangle} = \overline{\delta_{jl}} = \delta_{jl}.
\end{equation*}
Hence we may replace $g_l$ by $(g_l + C g_l)/2$.

We claim that for $W\in X_s$ small, the equation $\delta(\overline H + W - \lambda(W))\varphi_0=0$ is equivalent to
\begin{equation}\label{E:F_j-eqn}
  F_j(W) := \langle g_j, \delta(\overline H + W - \lambda(W))\varphi_0\rangle = 0,
\end{equation}
$j=1,\dots,m$. To verify this, it suffices to show that \eqref{E:F_j-eqn} implies that $\delta(\overline H + W - \lambda(W))\varphi_0=0$ for
$\|W\|_{\mathcal L(\mathcal H_{-s},\mathcal H_s)}$ small, since the other implication is trivial. Write
$\delta(\overline H + W - \lambda(W))\varphi_0 = \sum_{l=1}^m \alpha_l(W) f_l(W)$, and suppose that \eqref{E:F_j-eqn} holds. Then for every $j\in\{1,\dots, m\}$
\begin{equation}\label{E:alpha-eq}
   \sum_{l=1}^m \alpha_l(W) \langle g_j, f_l(W)\rangle =0.
\end{equation}
Note that the $m\times m$-matrix with entries $\langle g_j, f_l(W)\rangle$ is continuous and equal to the identity matrix when $W=0$. Hence it
is invertible for $\|W\|_{\mathcal L(\mathcal H_{-s},\mathcal H_s)}$ small, and from \eqref{E:alpha-eq} we obtain $\alpha_j(W)=0$ for  $\|W\|_{\mathcal L(\mathcal H_{-s},\mathcal H_s)}$ small.

By Proposition \ref{P:differentiable} we have for some neighborhood $\mathcal O$ of $0 \in X_s$, $F\in C^k(\mathcal O;\mathbb C^m)$, where $F_j$ are the components of $F$. Note
that $\varphi_0$ can be chosen real since otherwise we may replace $\varphi_0$ by its real or imaginary parts (i.e. $(\varphi_0 + C \varphi_0)/2$ or $(\varphi_0 -C\varphi_0)/2$). From
our choice of $g_j$ it now follows that  $F\in C^k(\mathcal O;\mathbb R^m)$.

We need to calculate $F_j'(0) W$. Note that $\delta(\overline H - \lambda)\varphi_0 = 0$ for every real $\lambda$ in a neighborhood of $\lambda_0$, and that $(\overline H - \lambda_0 + i0)^{-1} \varphi_0 = \varphi_0$,
and so it follows from  the chain rule, Proposition \ref{P:differentiable} and Proposition \ref{P:pert} that for $\lambda = \lambda_0$
\begin{equation*}
   \begin{aligned}
      F_j'(0) W &= -\langle g_j,\delta(\overline H - \lambda) W (\overline H - \lambda + i0)^{-1} \varphi_0\rangle
      - \langle g_j, (\overline H - \lambda - i0)^{-1} W \delta(\overline H - \lambda) \varphi_0 \rangle \\
      &\qquad \qquad +  \langle \varphi_0,W\varphi_0\rangle \langle g_j, \frac{d}{d\lambda}
       \delta(\overline H - \lambda)\varphi_0\rangle \\
&= -\langle g_j, \delta(\overline H - \lambda_0) W \varphi_0 \rangle.
   \end{aligned}
\end{equation*}
We need to show that $F_1'(0),\dots,F_m'(0)$ are linearly independent. To see this, let
\begin{equation*}
   \sum_{j=1}^m \alpha_j F'_j(0) = 0,
\end{equation*}
and let $g:= \sum_{j=1}^m \alpha_j g_j$. Then for every $W\in X_s$
\begin{equation*}
   \langle  g,\delta(\overline H -\lambda_0) W \varphi_0\rangle = 0.
\end{equation*}
By (H5)  and by the definition of $g$ it follows that $g=0$, and so by the linear independence of $g_1,\dots,g_m$, we obtain that $\alpha_j = 0$
for $j=1,\dots,m$, and we conclude that  $F_1'(0),\dots,F_m'(0)$ are linearly independent.

We are now able to make the decomposition $X_s = \ker F'(0) \oplus \mathcal M$, where $\mathcal M$ has dimension $m$.
Moreover, the map $F'(0):\mathcal M\to \mathbb R^m$ is a linear homeomorphism and $F(0)=0$. For $W\in X_s$, we write $W =
\xi + \eta$ where $\xi\in \ker F'(0)$ and $\eta\in \mathcal M$.  We also use the notation $F(\xi,\eta) = F(\xi +
\eta)$. By the implicit function theorem the equation $F(W)=0$ can be solved for $\eta$ in terms of $\xi$, i.e. $\eta =
\eta(\xi)$ in a neighborhood of $0$, and for some neighborhood $U$ of $0 \in \ker F'(0), \eta\in C^k(U;\mathcal M)$. This defines a $C^k$ manifold of
codimension $m$ in a neighborhood of $0$.
\end{proof}

\section{Applications to elliptic differential operators}\label{S:applications}\noindent
Here we present some examples for which the assumptions (H1)--(H5) can be verified.

\begin{Ex}\label{Ex:line}
   Let $\mathcal H:= L^2(\mathbb R)$, and let $\mathcal H_s := L^2_s(\mathbb R)$, where $L^2_s(\mathbb R)$ is the Hilbert space of functions $\psi$ such
   that $(1+x^2)^{s/2} \psi(x)$ is square integrable. Let  $H:=d^4/dx^4 + V(x)$, where $V\in C^{k+2}(\mathbb R)$
   is a real  potential  satisfying
   \begin{enumerate}
      \item[(i)] $\sup_{x\in \mathbb R}|V^{(j)}(x)|(1+|x|^2)^{j/2}<\infty$ for $j=0,\dots, k+2$,
      \item[(ii)] $\sup_{x\in \mathbb R} |V(x)| (1 + |x|^2)^{q} <\infty$, where $q>1/2$.
   \end{enumerate}
   We consider $H$ as an unbounded operator in $L^2(\mathbb R)$ with its domain being the Sobolev space $W^{4,2}(\mathbb R)$ (also denoted by $H^4(\mathbb R)$), where $H u =\displaystyle{\frac{d^4 u}{dx^4}} + V u$ for $u\in \Dom(H)$. Denote by $H_0$ the same operator with $V\equiv 0$. It is readily seen (by Fourier transform) that $H_0$ is a self-adjoint operator with $\sigma(H_0)=\sigma_c(H_0)=\mathbb R_+$. Since $V$ is a bounded function on $\mathbb R$ which tends to $0$ as $|x|\to\infty$, it follows by well known results that
   $H$ is a self-adjoint operator and that $\sigma_c(H) = \sigma_c(H_0)=\mathbb R_+$. Thus in particular $H$ satisfies assumption (H1).
   We shall assume now that
   \begin{enumerate}
      \item[(iii)]{$H$ has an embedded eigenvalue $\lambda_0>0$ with multiplicity $1$.\footnote{That this can be achieved can be seen
      for the example when $V$ is  given by
      $V(x) = 20/\cosh^2 x - 24/\cosh^4 x$. A short calculation
      shows that $\lambda_0=1$ is an embedded eigenvalue and that the corresponding eigenfunction is $\varphi_0(x)= 1/\cosh x$.
      It follows from ODE theory \cite{CL55} that $\lambda_0$ is a simple eigenvalue, since the equation $d^4 \psi/dx^4 + V(x) \psi =
      \lambda_0 \psi$ has exactly one linearly independent solution which decays as $x\to+\infty$ (or $x\to -\infty$).}}
   \end{enumerate}
   We take $s > k + 1/2$ and $k \ge 1$.  As the space of perturbations $X_s$, we choose the set of real multiplication operators in $\mathcal L(\mathcal H_{-s},\mathcal H_s)$, i.e. multiplication by real functions $W$ on $\mathbb R$ which satisfy
   $$\sup_{x\in \mathbb R} (1 + |x|^2)^{s}|W(x)|<\infty.$$
   As the antiunitary involution we choose complex conjugation. Below we show that the conditions (H2)--(H5) are satisfied for this operator
   with $s_1=k+1/2$ in condition $\text{(H3)}_k$ and $m=2$ in condition (H4). Theorem \ref{T:main} then implies that the set of small perturbations which do not remove the embedded eigenvalue is a $C^k$ manifold in $X_s$ of codimension $2$.

   To verify (H2), we use Theorem 4.1 and inequality (4.1) of \cite{mBA79} or Theorem 30.2.10 of \cite{lH85},  which shows that the eigenvalues of $H$ can only accumulate at
   $0$ and that the eigenfunctions belong to $\mathcal H_s$ for every $s$.  Theorem 30.2.10 of \cite{lH85} shows that  $\text{(H3)}_0$ is satisfied for $s>1/2$.\footnote{(H2) and $\text{(H3)}_0$ could also be verified by the methods of \cite{sA75}.}

   To prove $\text{(H3)}_k$, we need to verify the assumptions of Theorem 2.2. of \cite{JMP84} with
   $A = \frac{1}{8}(x D + D x)$, where $D:= -i d/dx$. The calculations are similar to those in Section I of \cite{eM81} and Section 5 of \cite{JMP84}, but we include them here for the convenience of the reader.

   Note that $\mathscr S(\mathbb R)$ is
   a common core for $\overline H$ and $A$, and so we may compute the commutators on $\mathscr S(\mathbb R)$. Hence $(a)$ of Definition 2.1 in
   \cite{JMP84} is satisfied, i.e. $\Dom(A) \cap \Dom(\overline H)$ is a core for $\overline H$.

   Condition $(b)$ of \cite{JMP84} is that $e^{i\theta A}$ maps $\Dom(\overline H)$ into $\Dom(\overline H)$ and for
   each $\psi\in \Dom(\overline H)$
   \begin{equation}\label{E:semigroup-estimate}
      \sup_{|\theta|\le 1} \| \overline H e^{i\theta A} \psi\|<\infty.
   \end{equation}
   To prove this, we use the formula
   \begin{equation}\label{E:semigroup-formula}
       e^{i\theta A}f(x) = e^{\theta/8} f(e^{\theta/4}x),
   \end{equation}
   which holds since the left and right hand sides of \eqref{E:semigroup-formula} define $C_0$ semigroups with the same infinitesimal generator $iA$. By the Hille--Yosida Theorem, the semigroups must be equal.
   By using \eqref{E:semigroup-formula}, it is easy to see that $e^{i\theta A}$ maps $\Dom(\overline H)$ into $\Dom(\overline H)$ and that \eqref{E:semigroup-estimate} holds.

   Let $B_0=\overline H$. The condition $(c_{k+1})$ of \cite{JMP84} requires that the forms $i^j B_j$ defined on
   $\Dom(\overline H)\cap \Dom(A)$ are all bounded
   from below and closable, and that $\Dom(B_j)\supset \Dom \overline H$, where  $B_j$ is the closure of $[B_{j-1},A]$ for
   $j=1,\dots, k+1$, and
   the commutator $[B_{j-1},A]$ is interpreted as a quadratic form, i.e.
   \begin{equation*}
      \langle\varphi,[B_{j-1},A]\psi\rangle  := \langle B_{j-1}\varphi,A\psi\rangle - \langle A\varphi,B_{j-1}\psi\rangle,
   \end{equation*}
   for $\varphi, \psi\in \Dom(A)\cap \Dom(\overline H)$. To verify this, we first use Theorem 8.1 of \cite{CL55} to show
   that $\varphi_0$ and its derivatives are exponentially decaying as $|x|\to\infty$. Indeed, after rewriting the
   eigenvalue equation as a system of four linear ODE's in the standard way, this theorem implies that
   $\varphi_0$, $\varphi_0'$, $\varphi_0''$ and
   $\varphi_0^{(3)}$ are all exponentially decaying. From the eigenvalue equation
   $\varphi_0^{(4)}=\lambda_0\varphi_0-V\varphi_0$ and by (i), it follows that $\varphi_0^{(4)}$ is exponentially decaying. We now differentiate
   this equation and proceed by induction. We see that $\varphi_0^{(j)}$ is exponentially decaying for $j=1,\dots, k+6$. It also follows that $A^j \varphi_0$ is exponentially decaying for each $j\le k+6$. In particular, $\varphi_0\in \Dom(A^j)$
   for $j=1,\dots,k+6$ and $A^j P_0$ is defined for those $j$.
   A calculation shows that
   \begin{equation}\label{E:commutator formula}
      i^j B_j = \frac{d^4}{dx^4} + \frac{(-1)^j}{4^j}  \left(x\frac{d}{dx}\right)^j V(x) + i^j \sum_{l=0}^j (-1)^l\binom{j}{l}
      A^l P_0 A^{j-l},
   \end{equation}
   is bounded from below and closable when $j\le k+1$, and its closure (also denoted by $B_j$) has the domain
   $\Dom{(B_j)} = \Dom(H) =\Dom{(\overline H)}$.
   Hence $(c_{k+1})$  of \cite{JMP84} is satisfied.

   Condition $(d_{k+1})$ of \cite{JMP84} states that the form $[B_{k+1},A]$ defined on $\Dom(\overline H)\cap \Dom(A)$ extends to a bounded operator from $\Dom(\overline{H})$ equipped with the graph norm to its dual obtained by the inner product on $\mathcal H$.   Using \eqref{E:commutator formula}, this is straightforward to check, since $B_{k+2}$
   is a bounded operator from  $\Dom(\overline H)$ to $L^2(\mathbb R)$.

   Finally, we verify the Mourre estimate $(e)$ of \cite{JMP84}, i.e. we need to verify that there exist $\alpha>0$, $\delta>0$, and a
   compact operator $K$ on $\mathcal H$ such that
   \begin{equation*}
       E_{J}(\overline H) i B_1 E_{J}(\overline H) \ge \alpha E_{J}(\overline H) + E_{J}(\overline H) K
       E_{J}(\overline H),
   \end{equation*}
   where $J:= (\lambda_0-\delta,\lambda_0+\delta)$. Let $0<\delta<\lambda_0$,
   and let $K:= (-V + i[V,A]-P_0+i[P_0,A]) E_{J}(\overline{H})$. The assumption (ii) on $V$ ensures that
   $(-V + i[V,A]-P_0+i[P_0,A])$ is $\overline H$-compact, and
   hence  $K$ is compact. By \eqref{E:commutator formula} for $j=1$,
   \begin{equation*}
      \begin{aligned}
         E_{J}(\overline H) i B_1 E_{J}(\overline H) &= E_{J}(\overline H) (\overline H - V + i[V,A]-P_0+i[P_0,A]) E_{\overline H}(J) \\
         &= E_{J}(\overline H) \overline H E_{J}(\overline H) + E_{J}(\overline H) K E_{J}(\overline H) \\
         &\ge(\lambda_0-\delta) E_{J}(\overline H) + E_{J}(\overline H) K E_{J}(\overline H).
      \end{aligned}
   \end{equation*}
   According to \cite{JMP84} this shows that the limits
   $$\lim_{\epsilon \downarrow 0}(1+A^2)^{-s/2}(\overline H - \lambda \pm i\epsilon)^{-1}(1+A^2)^{-s/2} = (1+A^2)^{-s/2}(\overline H - \lambda \pm i0)^{-1}(1+A^2)^{-s/2}$$
   \text {exist in} \,$\mathcal  L(\mathcal H )$ and are $C^{k}$
   in $\lambda$ in the norm topology of $\mathcal  L(\mathcal H )$ for
   $\lambda$ in some interval around $\lambda_0$.  We must now prove that $A$ can be replaced by $x$ in the latter statement.  It suffices to take $s=k+1$.  Using the resolvent equation repeatedly we see that it is enough to show that $A^s (\overline H + N)^{-n}(1+x^2)^{-s/2}$ is bounded for some large $N$ and $n$. By interpolation it suffices to take $s=k+1$. Since $A^s P_0$ is bounded it is enough to show boundedness of $D^lx^l(H + N)^{-n}(1+x^2)^{-s/2}$, where $D = -id/dx$ and $l\le s$.  Let $R = (H + N)^{-1}$.  We will control the terms generated by taking commutators with $x^l$ by the following lemma, which is easily proved by induction.
   \begin{Lem}\label{L:Sobolev}
   Suppose that $l \ge 0 $, $k \in \{0,1,2,3,4\}$, and  $V \in C^{(l + k -4)_+}(\mathbb{R})$ with bounded derivatives. Then  $$R : \mathcal H^l(\mathbb{R}) \rightarrow \mathcal H^{l+k}(\mathbb{R})$$ is bounded.
   \end{Lem}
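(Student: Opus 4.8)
The plan is to prove the mapping property by a routine elliptic bootstrap on the equation $(H+N)u=f$, gaining four derivatives at a time from the fourth order operator and keeping careful track of how much regularity of $V$ each gain consumes. Three elementary ingredients will be used. First, a Fourier a priori estimate: for every $m\ge 0$ there is $C_m>0$ with $\|w\|_{\mathcal H^{m+4}(\mathbb R)}\le C_m(\|w\|_{\mathcal H^{m}(\mathbb R)}+\|w^{(4)}\|_{\mathcal H^{m}(\mathbb R)})$, which is immediate from the pointwise bound $(1+\xi^2)^{m+4}\le C_m(1+\xi^8)(1+\xi^2)^m$. Second, a Leibniz estimate: if $g$ and its derivatives up to order $m$ are bounded, then $\|gw\|_{\mathcal H^m(\mathbb R)}\le C\,\|g\|_{W^{m,\infty}(\mathbb R)}\|w\|_{\mathcal H^m(\mathbb R)}$. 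Third, the base case $R:L^2(\mathbb R)\to\mathcal H^4(\mathbb R)$ is bounded: since $d^4/dx^4\ge 0$ one has $H\ge-\sup_{\mathbb R}|V|$, so for $N$ large $-N\in\rho(H)$ and $R$ is bounded on $L^2(\mathbb R)$ with range $\Dom(H)=\mathcal H^4(\mathbb R)$; writing $(Rf)^{(4)}=f-(V+N)Rf$ in $L^2(\mathbb R)$ and applying the a priori estimate with $m=0$ gives $\|Rf\|_{\mathcal H^4}\le C\|f\|_{L^2}$.

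Now fix $l\ge 0$, $k\in\{0,\dots,4\}$ and set $N_0:=(l+k-4)_+$, which by hypothesis is the number of bounded derivatives of $V$. Given $f\in\mathcal H^l(\mathbb R)$, let $u:=Rf$; by the base case $u\in\mathcal H^4(\mathbb R)$ and in $L^2(\mathbb R)$ we have $u^{(4)}=f-(V+N)u$. The single bootstrap step I would isolate is: if $u\in\mathcal H^m(\mathbb R)$ then $u\in\mathcal H^{\mu+4}(\mathbb R)$, where $\mu:=\min\{m,l,N_0\}$, with $\|u\|_{\mathcal H^{\mu+4}}\le C(\|u\|_{\mathcal H^m}+\|f\|_{\mathcal H^l})$. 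Indeed $f\in\mathcal H^l(\mathbb R)\subseteq\mathcal H^{\mu}(\mathbb R)$ since $\mu\le l$, and $(V+N)u\in\mathcal H^{\mu}(\mathbb R)$ by the Leibniz estimate because $V$ has bounded derivatives up to order $N_0\ge\mu$ and $u\in\mathcal H^m(\mathbb R)\subseteq\mathcal H^{\mu}(\mathbb R)$; hence $u^{(4)}\in\mathcal H^{\mu}(\mathbb R)$ and the a priori estimate applies. Iterating from $u\in\mathcal H^4(\mathbb R)$ produces exponents $m_0=4$, $m_{j+1}=\min\{m_j,l,N_0\}+4$, a nondecreasing sequence with fixed point $\min\{l,N_0\}+4$. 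When $l+k\ge 4$ one has $N_0=l+k-4\le l$, so $\min\{l,N_0\}+4=l+k$ and after finitely many steps $u\in\mathcal H^{l+k}(\mathbb R)$; when $l+k<4$ one has $N_0=0$ and already $u\in\mathcal H^4(\mathbb R)\subseteq\mathcal H^{l+k}(\mathbb R)$. Composing the finitely many quantitative inequalities gives $\|u\|_{\mathcal H^{l+k}}\le C\|f\|_{\mathcal H^l}$, with $C$ depending only on $l,k,N$ and $\sup_{j\le N_0}\sup_{\mathbb R}|V^{(j)}|$, which is the asserted boundedness.

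I do not expect any genuine obstacle; the only delicate point is the bookkeeping showing that exactly $N_0=(l+k-4)_+$ derivatives of $V$ are needed. This is visible in the argument: the only step that uses the full assumed regularity of $V$ is the final bootstrap, from $m=l+k-4$ to $m=l+k$, which requires $V\in W^{l+k-4,\infty}(\mathbb R)$ and nothing more, matching the hypothesis. (Equivalently one can organize the whole argument as an induction on $l$, the inductive step consisting of at most four applications of the bootstrap step, as the statement suggests; the direct iteration above seems cleaner to write up.)
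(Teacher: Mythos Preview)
Your argument is correct and is precisely the elliptic bootstrap that the paper has in mind when it says the lemma ``is easily proved by induction'': from $u^{(4)}=f-(V+N)u$ you gain four derivatives per step, using at most $(l+k-4)_+$ bounded derivatives of $V$ in the Leibniz estimate. The direct iteration you wrote and the induction on $l$ you mention at the end are the same proof organised differently.
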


   Commuting $x^l$ through $R^n$ produces terms with factors of $n$ resolvents interspersed with ($\le s$) factors of the form $D^jR$ where $j = 0, 1, 2$ or $3$.  The string of factors always begins with $R$ on the left.  Let us use the new factors $D^j R$ to map $H^l \rightarrow H^l$ and the old factors $R$ to increase the Sobolev index to $s$ which we write as $s = 4k_0 + m$ where
   $m = 0, 1, 2$, or $3$.  We will then know that $D^s$ times the operator string is bounded.  The only question is how many bounded derivatives of $V$ this requires.  Suppose after applying a string including $r$ of the original $R$'s and any number of the new $D^jR$'s to $L^2(\mathbb R)$ we find ourselves in $H^{4r}$ needing at most $4r-1$ bounded derivatives of $V$.  According to Lemma \ref{L:Sobolev}, applying $r'$ additional $R$'s brings us to $H^{4r+4r'}$ needing $4r + 4r' - 4$ bounded derivatives of $V$.  Applying any number of $D^jR$'s requires at most $4r + 4r' -1$ bounded derivatives to stay in $H^{4r+4r'}$.  Thus inductively we can reach $H^{4k_0}$ needing at most $4k_0 -1$ bounded derivatives.  We use the last $R$ to reach $H^{4k_0 + m}$ with no further derivatives needed.  Thus the requirement that $A^s (\overline H + N)^{-n}(1+x^2)^{-s/2}$ is bounded requires at most $s-1 = k$ bounded derivatives which we have by assumption (i).

   This concludes the proof of $\text{(H3)}_k$.

   We proceed by verifying (H4). More precisely, we will check that $\dim \Ran \delta(\overline H-\lambda)=2$ for $\lambda$ in a neighborhood of $\lambda_0$. Let $H_0:= d^4/dx^4$. It is clear that $\dim\Ran(H_0-\lambda)=2$ if $\lambda>0$. Indeed, the
   range is the span of the functions $e^{i\lambda^{1/4}\cdot}$ and $e^{-i\lambda^{1/4}\cdot}$.
   We now apply Proposition \ref{P:pert} with $H_0$ taking the place of $H$ and $P_0+V$ taking the place of $W$. We then get
   \begin{equation*}
      \delta(\overline H - \lambda) = (I - (\overline H-\lambda- i0)^{-1}(P_0+V))\delta(H_0-\lambda) (I-(P_0+V)(\overline H - \lambda +i0)^{-1}).
   \end{equation*}
   Since $(I-(P_0+V)(\overline H - \lambda +i0)^{-1})$ and $(I - (\overline H-\lambda- i0)^{-1}(P_0+V))$ are invertible (with inverses $(I+(P_0+V)(H_0-\lambda+i0)^{-1})$ and $(I + (H_0-\lambda -i0)^{-1}(P_0+V))$, respectively), this gives a one-to-one correspondence between the range of $\delta(H_0+V-\lambda)$ and the range of $\delta(\overline H +V-\lambda)$. We conclude that the dimensions of the ranges must be equal.

   (H5) is satisfied since  $\{W \varphi_0;\; W\in X_s\}$ is dense in $\mathcal H_s$, which follows since the zero set of $\varphi_0$ is at most
   countable with no accumulation points.
\end{Ex}
\begin{Ex}\label{Ex:cylinder}
   Let $M$ be the infinite cylinder $\mathbb R \times S^1$ with generic point $x=(z,\theta)$. $M$ is considered as a Riemannian manifold  with the metric $dx^2 = dz^2 + d\theta^2$. Let $H$ be a Schr\"odinger operator on $M$ of the form
   \begin{equation*}
      H := -\Delta + V= -\left(\frac{d^2}{dz^2} + \frac{d^2}{d\theta^2}\right) + V(z),
   \end{equation*}
   where $V$ is a real function in $C_0^\infty(\mathbb R)$, and consider $H$ as an unbounded operator in $L^2(M)$ with the domain being the Sobolev space $W^{2,2}(\mathbb R\times S^1)$. Viewing $H$ as a perturbation of the operator $H_0:=-\Delta$, it is easy to see (as in Example \ref{Ex:line}) that $H$ is a self-adjoint operator and that $\sigma_c(H) = \sigma_c(H_0)=\mathbb R_+$. Now, we shall choose the potential $V\in C_0^\infty(\mathbb R)$ so that $h:= -d^2/dz^2 + V$ on $L^2(\mathbb R)$ has exactly one eigenvalue $e<0$ of multiplicity $1$.
   It is possible to choose $V$ such that $e>-1$. We denote the corresponding eigenfunction by $f$. Let $n\ge 1$ and let $F(z,\theta) =
   \cos(n \theta) f(z)$. Then $H F = \lambda_0 F$ with $\lambda_0=n^2 + e>0$, and so $\lambda_0$ is an embedded eigenvalue.
   Note that the
   multiplicity of $\lambda_0$ is $2$ since $\sin(n \theta)f(z)$ is also an eigenfunction. This degeneracy can be removed by
   letting $\mathcal H$ be the subspace of $L^2(M)$ consisting of functions which are even in the $\theta$-variable. Let
   \begin{equation*}
      \mathcal H_s = \{f\in \mathcal H;\; (1 + z^2)^{s/2}f\in L^2(M)\},
   \end{equation*}
   and let $X_s$ be the space of real
   multiplication operators $W$ which are even in $\theta$ and satisfy
   \begin{equation*}
      \sup_{
      \begin{smallmatrix}
         z\in\mathbb R, \\
         \theta\in S^1
      \end{smallmatrix}}
      (1 + |z|^2)^s |W(z,\theta)| <\infty.
   \end{equation*}
   The antiunitary involution is complex conjugation.
   Below we show that conditions (H1)--(H4) are satisfied for this operator for any fixed $k\ge 1$ with $s_1=k+1/2$ in condition $\text{(H3)}_k$ and $m=2n$ in condition (H4). Then Theorem \ref{T:main} tells us that the set of small perturbations for which the embedded eigenvalue persists is a $C^k$ manifold in $X_s$ of codimension $2n$.

   Assumptions (H2) and $\text{(H3)}_0$ are verified (via separation of variables) using, as in Example \ref{Ex:line}, well known limiting absorption results for the Schr\"odinger operator on $\mathbb R$ (e.g. \cite{sA75, lH85}). We omit the details.

   To verify $(H3)_k$, we apply the result from \cite{JMP84} with $A=(zD+Dz)/4$, where
   $D = -i \partial/\partial z$. Note that $i[H_0,A]=D^2$, and let $J=(\lambda_0-\delta,\lambda_0+\delta)$, where
   $\delta<\min(1,\min_{j\in\mathbb Z}|\lambda_0-j^2|)=:\alpha$.
   Let $E_J(\overline H)$ be the spectral projection of $\overline H$ in $L^2(\mathbb R\times S^1)$ corresponding to the interval $J$, and let $1_J(h)$ be the spectral projection of $h$ in $L^2(\mathbb R)$ corresponding to $J$. Let $Q_n=1_{\{n^2\}}(-\partial^2_\theta)$
   in $L^2(S^1)$.
   Note that
   \begin{equation*}
      E_J(\overline H) = E_J(\overline H) P_0 + E_J(\overline H)(I-P_0) = E_J(H)(I-P_0),
   \end{equation*}
   since $J\cap \{\lambda_0+1\}=\emptyset$.
   By the choice of $J$,
   \begin{equation*}
      J\cap \sigma(h+j^2) = J\cap ([j^2,\infty)\cup \{e+j^2\}) =
      \begin{cases}
         \{\lambda_0\} &\text{if }j^2=n^2,  \\
         \emptyset &\text{if }j^2>\lambda_0\text{ and }j^2\ne n^2, \\
         J &\text{if }j^2<\lambda_0,
      \end{cases}
   \end{equation*}
   where $\sigma(h+j^2)$ is the spectrum of $h+j^2$ in $L^2(\mathbb R)$.
   Note that
   \begin{equation*}
       P_0 = 1_{\{\lambda_0\}}(h+n^2)\otimes Q_n,
   \end{equation*}
   and that for $j^2<\lambda_0$, we actually have $j^2\le\lambda_0-\alpha$. A calculation using that $P_0(I-P_0)=0$ shows that
   \begin{equation*}
      \begin{aligned}
         E_J(\overline H) &= \biggl(\sum_{j^2\le \lambda_0-\alpha} 1_J(h+j^2)\otimes Q_j + 1_{\{\lambda_0\}}(h+n^2)\otimes Q_n\biggr)(I-P_0) \\
         &= \biggl(\sum_{j^2\le \lambda_0-\alpha} 1_J(h+j^2)\otimes Q_j\biggr)(I-P_0).
      \end{aligned}
   \end{equation*}
   Moreover,
   \begin{equation*}
      \begin{aligned}
         E_J(\overline H) (-\partial_z^2 + V) E_J(\overline H) &= \biggl( \sum_{j^2\le \lambda_0-\alpha} (h 1_J(h+j^2))\otimes Q_j \biggr)(I-P_0) \\
         &\ge
         \biggl( \sum_{j^2\le\lambda_0-\alpha} (\lambda_0-j^2-\delta) 1_J(h+j^2))\otimes Q_j \biggr)(I-P_0) \\
         &\ge
         (\alpha-\delta)  \biggl( \sum_{j^2 \le \lambda_0-\alpha}  1_J(h+j^2))\otimes Q_j \biggr)(I-P_0) \\
         &= (\alpha-\delta) E_J(\overline H)
      \end{aligned}
   \end{equation*}

   Note that
   \begin{equation*}
      iB_1 E_{\overline H}(J)=((-\partial_z^2 + V)+i[V,A]-V+i[P_0,A]-P_0)E_{\overline H}(J)=
     ((-\partial_z^2 +V)+K)E_{\overline H}(J),
   \end{equation*}
   where $K$ is compact. Hence
   \begin{equation*}
      E_J(\overline H) iB_1 E_J(\overline H) =  E_J(\overline H) (-\partial_z^2 + V) E_J(\overline H) + E_J(\overline H) K E_J(\overline H) \ge (\alpha-\delta) E_J(\overline H) + E_J(\overline H)KE_J(\overline H),
   \end{equation*}
   and $(H3)_k$ follows from \cite{JMP84} and an argument similar to the one in Example \ref{Ex:line} which converts $(1+A^2)^{s/2}$ to $(1+z^2)^{s/2}$.

   Condition (H4) is verified in the same way as in Example \ref{Ex:line}. Since
   $0<\lambda_0^{1/2}$ and $\lambda_0^{1/2}$ is not an integer, $\Ran \delta(H_0-\lambda)$ is the span of the functions
   $e^{ikz}\cos(j\theta)$, where $j^2+k^2=\lambda_0$, $j=0,\dots, n-1$, $k\in \mathbb R$. The number of such functions is $2n$, so
   $\dim\Ran \delta(H_0-\lambda_0)=:m =2n$. Now we proceed as in Example \ref{Ex:line} and conclude that
   the multiplicity of the continuous spectrum is constant when $\lambda_0\ne j^2$ for $j\in \mathbb Z$.

   To prove (H5), we note that the set $Z_f$ of zeroes of $f$ is at most countable and has no accumulation points, and so the
   zero set $Z_F$ of $F$ is
   \begin{equation*}
       (Z_f\times S^1) \cup (\mathbb R \times \{(1/2 + j)\pi/n;\; j=0,\dots,2n-1\}),
   \end{equation*}
   which is a union of
   straight lines and circles which do not accumulate. Let
   \begin{equation*}
      Z_F^\epsilon:= \{(z,\theta)\in \mathbb R\times S^1;\;|F(z,\theta)|<\epsilon\},
   \end{equation*}
   and let $\chi_\epsilon$ be a smooth cutoff function such that $0\le \chi_\epsilon\le 1$ and
   \begin{equation*}
      \chi_\epsilon(z,\theta) =
      \begin{cases}
         0 &\text{for }(z,\theta)\in Z_F^\epsilon, \\
         1 &\text{for }(z,\theta)\notin Z_F^{2\epsilon}.
      \end{cases}
   \end{equation*}
   We also require that $\chi_\epsilon$ is even in the $\theta$-variable. Let $\varphi\in \mathcal H_s$, and let
   \begin{equation*}
      W_\epsilon(z,\theta):=
      \begin{cases}
         \displaystyle\frac{\chi_\epsilon(z,\theta) \varphi(z,\theta)}{F(z,\theta)} &\text{if }F(z,\theta)\ne 0, \\
         0 &\text{if }F(z,\theta)=0.
      \end{cases}
   \end{equation*}
   Then $W_\epsilon\in X_s$ and $\|W_\epsilon F - \varphi\|_{\mathcal H_s}\to 0$ as $\epsilon\to 0$. It follows that
   $\{W F;W\in X_s\}$ is dense in $\mathcal H_s$, and so (H5) holds.

\end{Ex}

\section{On perturbations of degenerate embedded eigenvalues}\label{S:degenerate}\noindent
In this section we assume that (H1), (H2), and $\text{(H3)}_k$,  are satisfied. In the last theorem of this section we also assume (H4) and (H5'). Let $n$ denote the multiplicity of the
eigenvalue $\lambda_0$. We assume that $n\ge 2$.

We start by fixing an orthonormal basis $\psi_1,\dots,\psi_n$ in the eigenspace of $H$ at $\lambda_0$. We assume that all
$\psi_i$ are real. We first show that this is possible: Suppose that $\psi_1, \dots, \psi_n$ is an ON-basis for $\Ran P_0$, If
$\psi_1$ is not real, then we replace $\psi_1$ by $(\psi_1+C\psi_1)/2$ or $(\psi_1-C\psi_1)/(2i)$, and choose a new ON-basis $\psi_1,\dots, \psi_n$ for $\Ran P_0$, where $C\psi_1=\psi_1$. Now
\begin{equation*}
   C:\span\{\psi_2,\dots,\psi_n\}\to \span\{\psi_2,\dots,\psi_n\},
\end{equation*}
since if $C\psi_2=\alpha \psi_1+\psi_\perp$ where $\langle \psi_\perp,\psi_1\rangle=0$, then
\begin{equation*}
  \langle C\psi_1,C\psi_2\rangle = \langle \psi_2,\psi_1\rangle = 0 = \langle \psi_1,C\psi_2\rangle = \overline{\alpha},
\end{equation*}
and so $\alpha=0$. Now we replace $\psi_2$ by $(\psi_2+C\psi_2)/2$ or $(\psi_2-C\psi_2)/(2i)$ then renormalize the result and repeat. This shows that
$\psi_1,\dots,\psi_n$ can be assumed to be real.

Denote by $P_i$ the orthogonal projection in $\mathcal H$ onto $\span\{\psi_i\}$. Recall from
Section~\ref{S:main} that $P_0$ is the orthogonal projection onto the full eigenspace of $H$ at $\lambda_0$ and
$\overline H = H + P_0$.

Let $H_1:= H + P_0 - P_1 = \overline H - P_1$, and note that $\lambda_0$ is an embedded eigenvalue of multiplicity~$1$
of $H_1$, and that $\psi_1$ is a corresponding normalized eigenvector.

We give sufficient conditions for $H+W$ to have at least one embedded eigenvalue close to $\lambda_0$. The eigenvalue
and eigenvector of $H+W$ that we will construct coincide with the eigenvalue and eigenvector of the operator $H_1+W$.

To this end we first notice that by the proof of Proposition \ref{P:A-equation} there exists a $C^k$ real valued function $\lambda_1(W)$, defined in a neighborhood of $0$ in $X_s$, such that $\langle \psi_1,A(\lambda_1(W),W)\psi_1\rangle = 1$, $\lambda_1(0)=\lambda_0$ and $\lambda'(0)\widetilde W = \langle \psi_1,\widetilde W\psi_1\rangle$, where as usual $A(\lambda,W)$ denotes the operator \eqref{E:A-def} associated with $H$ and the eigenvalue $\lambda_0$.
\begin{Rem}\label{R:A-A_1}
   Note that the operator $Q_1(\lambda+i0,W)$ which is the operator $Q(\lambda+i0,W)$ corresponding to $H_1$ is given by
   $Q_1(\lambda+i0,W) = P_1(H_1 + P_1+ W - \lambda-i0)^{-1}P_1 = P_1 (\overline H + W -\lambda-i0)^{-1} P_1$.
   It follows that $\langle \psi_1,A(\lambda,W)\psi_1\rangle = \langle \psi_1,A_1(\lambda,W)\psi_1\rangle$, where $A_1(\lambda,W)$ denotes the operator \eqref{E:A-def} associated with the operator $H_1$ and the eigenvalue $\lambda_0$.
\end{Rem}

\begin{Prop}\label{P:H_1-eigenvalue}
   Let $n\ge 2$, and assume that (H1), (H2), and $\text{(H3)}_k$  are satisfied for some $k \ge 1$.  Suppose $s > s_1$.  If $W \in X_s$ is sufficiently small, then $H_1+W$ has an embedded eigenvalue in a neighborhood of $\lambda_0$
   if and only if
   \begin{equation*}
      \delta(\overline H + W - \lambda_1(W))\psi_1 = 0,
   \end{equation*}
   where $\lambda_1$ is the function defined above. Moreover the corresponding eigenvector is given by
   \begin{equation*}
      \psi_1^W = (\overline H + W - \lambda_1(W)-i0)^{-1} \psi_1.
   \end{equation*}
\end{Prop}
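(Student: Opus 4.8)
The plan is to deduce this from the simple-eigenvalue machinery of Section \ref{S:prel}--\ref{S:Q}, applied not to $H$ but to the operator $H_1 = \overline H - P_1$, with $\psi_1$ playing the role of $\varphi_0$. The decisive observation is that $H_1 + P_1 = \overline H$, so the ``barred operator'' associated with $H_1$ is literally $\overline H$ itself; consequently every hypothesis and every conclusion phrased in terms of $(\overline H + W - \lambda \mp i0)^{-1}$ transfers verbatim.

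First I would check that $H_1$ satisfies (H1), (H2) and $\text{(H3)}_k$. Since $\psi_1$ is real, $P_1$ is a real bounded self-adjoint operator, so $H_1 = \overline H - P_1$ is real and self-adjoint, which is (H1). As recorded in the text preceding the proposition, $\lambda_0$ is an eigenvalue of $H_1$ of multiplicity $1$ with real normalized eigenvector $\psi_1$; it is isolated in $\sigma_{pp}(H_1)$ (the remaining eigenvectors $\psi_2,\dots,\psi_n$ have become eigenvectors at $\lambda_0 + 1$, and on $\Ran(I-P_0)$ the operator $H_1$ coincides with $H$, which has no eigenvalue there at $\lambda_0$ because $\lambda_0$ is isolated in $\sigma_{pp}(H)$), it is embedded in the continuous spectrum, and its eigenspace $\span\{\psi_1\}\subset\Ran P_0\subset\cap_{t\ge 0}\mathcal H_t$; hence (H2). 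Finally $\text{(H3)}_k$ for $H_1$ is exactly $\text{(H3)}_k$ for $\overline{H_1}=\overline H$, which is assumed, so the same $s_1$, $\delta_1$, $J_1$ and $\gamma$ (from Proposition \ref{Prop:uniform2}) may be used. Thus Propositions \ref{P:ev}, \ref{P:A-equation} and \ref{P:eigenvector} all apply with $H_1$, $\psi_1$, $\overline H$.

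Next I would run the equivalence. By Remark \ref{R:A-A_1}, $Q_1(\lambda+i0,W) = P_1(\overline H + W - \lambda - i0)^{-1}P_1$ and $\langle\psi_1, A_1(\lambda,W)\psi_1\rangle = \langle\psi_1, A(\lambda,W)\psi_1\rangle$, so the function $\lambda_1(W)$ defined before the proposition is precisely the one furnished by Proposition \ref{P:A-equation} applied to $H_1$; choose $W$ small enough that $\lambda_1(W)$ is defined, lies in $J_1$, and $\|W\|_{\mathcal L(\mathcal H_{-s},\mathcal H_s)} < \gamma$. If $H_1 + W$ has an eigenvalue $\lambda$ near $\lambda_0$, then $\lambda\in J_1$ and Proposition \ref{P:ev} (for $H_1$) gives $f\ne 0$ with $Q_1(\lambda+i0,W)f=f$; since $\Ran Q_1(\lambda+i0,W)\subset\Ran P_1$ is one-dimensional, $f$ is a nonzero multiple of $\psi_1$, so $Q_1(\lambda+i0,W)\psi_1=\psi_1$. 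The reduction of Section \ref{S:Q} applied to $H_1$ (equations \eqref{E:Q-eq}--\eqref{E:deltaA-equation}) shows this is equivalent to $\delta(\overline H + W - \lambda)\psi_1 = 0$ together with $\langle\psi_1, A_1(\lambda,W)\psi_1\rangle = 1$; by the local uniqueness in Proposition \ref{P:A-equation} (the $\lambda$-derivative there equals $1$) the latter forces $\lambda = \lambda_1(W)$, so $\delta(\overline H + W - \lambda_1(W))\psi_1 = 0$. Conversely, if $\delta(\overline H + W - \lambda_1(W))\psi_1 = 0$, then, using $\langle\psi_1, A_1(\lambda_1(W),W)\psi_1\rangle = 1$ (the defining property of $\lambda_1$) and the fact that $A_1(\lambda,W)$ maps $\span\{\psi_1\}$ into itself, hence $A_1(\lambda_1(W),W)\psi_1 = \psi_1$, the same reduction yields $Q_1(\lambda_1(W)+i0,W)\psi_1 = \psi_1$; Proposition \ref{P:ev} then makes $\lambda_1(W)$ an eigenvalue of $H_1+W$, and it lies near $\lambda_0$ since $\lambda_1(0)=\lambda_0$ and $\lambda_1$ is continuous. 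The expression for the eigenvector $\psi_1^W$ is exactly the content of Proposition \ref{P:eigenvector} applied to $H_1$, since $\overline{H_1}=\overline H$; that proposition also gives that the eigenvalue is simple and that $\psi_1^W\ne 0$.

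The only genuinely non-mechanical step is the verification that $H_1$ satisfies the abstract hypotheses (H1)--$\text{(H3)}_k$ --- in particular that $P_1$ is real so $H_1$ is real, and that $\lambda_0$ remains isolated in $\sigma_{pp}(H_1)$ --- together with the bookkeeping of choosing $W$ and the neighborhoods small enough that $\lambda_1(W)\in J_1$ and the implicit-function-theorem uniqueness applies; once these are in place the proposition is a direct transcription of the simple-eigenvalue results.
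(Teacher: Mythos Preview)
Your proposal is correct and follows essentially the same approach as the paper: apply the simple-eigenvalue machinery of Sections \ref{S:prel}--\ref{S:Q} to $H_1$, using $\overline{H_1}=\overline H$ and Remark \ref{R:A-A_1}. The paper's own proof is a one-line citation of equation \eqref{E:deltaA-equation}, Propositions \ref{P:A-equation} and \ref{P:eigenvector}, and Remark \ref{R:A-A_1}; you have simply spelled out the verifications (that $H_1$ satisfies (H1), (H2), $\text{(H3)}_k$, and that $\lambda=\lambda_1(W)$ is forced) that the paper leaves implicit.
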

\begin{proof}
   The result follows directly from equation \eqref{E:deltaA-equation}, Proposition \ref{P:A-equation}, Proposition \ref{P:eigenvector} and Remark \ref{R:A-A_1}.
\end{proof}

\begin{Thm}\label{T:l eig suff}
   Suppose that (H1), (H2), and $\text{(H3)}_k$ are satisfied for some $k \ge 1$. Let $n\ge 2$ and $s>s_1$, and let
   $\lambda_1$ be the function defined above. Then there exists a neighborhood $\mathcal O$ of $0$ in $X_s$ such
   that if $W\in \mathcal O$, then
   a sufficient condition that $\lambda_1(W)$ is an eigenvalue of $H+W$
   is that
   \begin{equation}\label{E:embeigcond1}
      \langle \psi_i,A(\lambda_1(W),W)\psi_1\rangle = 0, \qquad i=2,\dots,n,
   \end{equation}
   and
   \begin{equation}\label{E:embeigcond2}
      \delta(\overline H + W - \lambda_1(W))\psi_1 = 0.
   \end{equation}
\end{Thm}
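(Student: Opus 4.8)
The plan is to deduce the theorem from Proposition \ref{P:H_1-eigenvalue} by exploiting that $H+W$ differs from $H_1+W$ only by a finite rank operator supported on $\span\{\psi_2,\dots,\psi_n\}$, and that \eqref{E:embeigcond1}--\eqref{E:embeigcond2} are precisely the conditions forcing the eigenvector of $H_1+W$ furnished by that proposition to be orthogonal to $\psi_2,\dots,\psi_n$. Once that orthogonality holds, the rank of the difference $H+W-(H_1+W)$ acts trivially on the eigenvector, so the same vector and eigenvalue work for $H+W$.

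First I would record the algebraic identity
\begin{equation*}
   H = H_1 - P_0 + P_1 = H_1 - \sum_{i=2}^n P_i,
   \qquad\text{hence}\qquad
   H + W = H_1 + W - \sum_{i=2}^n P_i .
\end{equation*}
Next, choose the neighborhood $\mathcal O$ of $0$ in $X_s$ small enough that $\lambda_1(W)$ is defined on $\mathcal O$, lies in the interval $J_1$ of Proposition \ref{Prop:uniform2}, and $\|W\|_{\mathcal L(\mathcal H_{-s},\mathcal H_s)}<\gamma$ there; then Propositions \ref{P:A-equation}, \ref{P:ev}, \ref{P:eigenvector} and \ref{P:H_1-eigenvalue} are all in force, noting that $\overline{H_1}=H_1+P_1=\overline H$, so these apply to $H_1$ with exactly the resolvent and $\delta$-operator that appear in \eqref{E:embeigcond1}--\eqref{E:embeigcond2}. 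By \eqref{E:embeigcond2} and Proposition \ref{P:H_1-eigenvalue}, $\lambda_1(W)$ is an eigenvalue of $H_1+W$ with eigenvector
\begin{equation*}
   \psi_1^W = (\overline H + W - \lambda_1(W) - i0)^{-1}\psi_1 \in \mathcal H ,
\end{equation*}
which is nonzero (indeed $P_1\psi_1^W=\psi_1$) and lies in $\Dom(H_1)=\Dom(H)=\Dom(H+W)$.

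The key step is to compute $\langle\psi_i,\psi_1^W\rangle$ for $i=2,\dots,n$. Since $\psi_i,\psi_1\in\Ran P_0\subset\mathcal H_s$ and
\begin{equation*}
   (\overline H + W - \lambda_1(W) - i0)^{-1} - (\overline H + W - \lambda_1(W) + i0)^{-1} = 2\pi i\,\delta(\overline H + W - \lambda_1(W)) ,
\end{equation*}
condition \eqref{E:embeigcond2} gives
\begin{equation*}
   \langle\psi_i,(\overline H + W - \lambda_1(W) - i0)^{-1}\psi_1\rangle = \langle\psi_i,(\overline H + W - \lambda_1(W) + i0)^{-1}\psi_1\rangle ,
\end{equation*}
so each side equals their average, which by \eqref{E:A-def} (and $\psi_1,\psi_i\in\Ran P_0$) is precisely $\langle\psi_i,A(\lambda_1(W),W)\psi_1\rangle$. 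Hence
\begin{equation*}
   \langle\psi_i,\psi_1^W\rangle = \langle\psi_i,A(\lambda_1(W),W)\psi_1\rangle = 0
\end{equation*}
by \eqref{E:embeigcond1}, so $P_i\psi_1^W=0$ for $i=2,\dots,n$. Therefore
\begin{equation*}
   (H+W)\psi_1^W = (H_1+W)\psi_1^W - \sum_{i=2}^n P_i\psi_1^W = \lambda_1(W)\psi_1^W ,
\end{equation*}
and since $\psi_1^W$ is a nonzero vector in $\Dom(H+W)$, $\lambda_1(W)$ is an eigenvalue of $H+W$.

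I expect the real work to be bookkeeping rather than analysis: everything delicate is already packaged in Propositions \ref{P:A-equation}--\ref{P:H_1-eigenvalue}, so the points to check carefully are that $\mathcal O$ can be taken small enough for all those propositions to hold simultaneously for $H_1$, and that the $\delta$-operator and boundary resolvent in the hypotheses \eqref{E:embeigcond1}--\eqref{E:embeigcond2} are literally the ones produced by Proposition \ref{P:H_1-eigenvalue}, which rests on the identity $\overline{H_1}=H_1+P_1=\overline H$ (so that the construction of $\lambda_1$ and $\psi_1^W$ via $A$, together with Remark \ref{R:A-A_1}, transfers from $H$ to $H_1$).
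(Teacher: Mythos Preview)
Your proposal is correct and follows essentially the same route as the paper: invoke Proposition~\ref{P:H_1-eigenvalue} to obtain the eigenvector $\psi_1^W$ of $H_1+W$, use \eqref{E:embeigcond2} to identify $\langle\psi_i,(\overline H+W-\lambda_1(W)-i0)^{-1}\psi_1\rangle$ with $\langle\psi_i,A(\lambda_1(W),W)\psi_1\rangle$, apply \eqref{E:embeigcond1} to get $P_i\psi_1^W=0$, and then use $H+W=H_1+W-\sum_{i=2}^n P_i$. Your write-up is slightly more explicit than the paper's about the step $\langle\psi_i,\psi_1^W\rangle=\langle\psi_i,A(\lambda_1(W),W)\psi_1\rangle$ and about the role of $\overline{H_1}=\overline H$, but the argument is the same.
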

\begin{proof}
   By \eqref{E:embeigcond2} and Proposition \ref{P:H_1-eigenvalue}, $\lambda_1(W)$ is an eigenvalue of $H_1+W$
   with corresponding eigenfunction $\psi_1^W$.

   The conditions \eqref{E:embeigcond1}  and \eqref{E:embeigcond2} together with Proposition \ref{P:H_1-eigenvalue} imply that
   \begin{equation}\label{E:orthogonal}
      \langle \psi_i,\psi_1^W\rangle = \langle\psi_i,(\overline H + W - \lambda_1(W)-i0)^{-1}\psi_1\rangle = \langle \psi_i,A(\lambda_1(W),W)\psi_1\rangle =  0
   \end{equation}
   for $i=2,\dots,n$ and $W$ sufficiently small. In particular
   the eigenvector $\psi_1^W$ of
   $H_1+W$ is orthogonal to $\psi_i$, $i\ne 1$. Finally we note that $\lambda_1(W)$ is also an eigenvalue of $H+W$, and that
   $\psi_1^W$ is a corresponding eigenvector.
   Indeed,
   \begin{equation*}
      (H + W - \lambda_1(W))\psi_1^{W} = (H_1 + W - \lambda_1(W))\psi_1^{W} -
      \sum_{i=2}^n P_i\psi_1^{W} = 0.
   \end{equation*}
\end{proof}

For our final theorem in this section, we need the additional conditions (H4) and (H5'):
\begin{Thm}\label{T:codim-degenerate}
   Suppose that (H1), (H2), $\text{(H3)}_k$, (H4) and (H5') are satisfied for some $k \ge 1$, and suppose that $n\ge 2$ and $s>s_1$.
   Then there exists a $C^k$ manifold $\mathcal M\subset X_s$  of codimension $\nu:=m+n-1$, a neighborhood $\mathcal O$ of $0\in
   X_s$ and a $\delta>0$
   such that if $W \in \mathcal M \cap \mathcal O$ then $H+W$ has an embedded eigenvalue $\lambda_1(W)\in (\lambda_0-\delta,\lambda_0+\delta)$.
\end{Thm}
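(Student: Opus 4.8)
The plan is to combine Theorem \ref{T:l eig suff} with the codimension count coming from Theorem \ref{T:main} applied to the simple eigenvalue of $H_1$, together with the extra conditions \eqref{E:embeigcond1}. Concretely, by Theorem \ref{T:l eig suff}, if $W\in\mathcal O$ satisfies the $n-1$ scalar equations $\langle\psi_i,A(\lambda_1(W),W)\psi_1\rangle=0$ for $i=2,\dots,n$ and the vector equation $\delta(\overline H+W-\lambda_1(W))\psi_1=0$, then $\lambda_1(W)$ is an eigenvalue of $H+W$. So $\mathcal M$ should be the common zero set of these equations, and I must show it is a $C^k$ submanifold of codimension $\nu=m+n-1$. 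The vector equation $\delta(\overline H+W-\lambda_1(W))\psi_1=0$ is exactly the equation treated in the proof of Theorem \ref{T:main}, but now for the operator $H_1$ (whose simple embedded eigenvalue is $\lambda_0$ with eigenvector $\psi_1$) instead of $H$; by Remark \ref{R:A-A_1} the associated $A$-function and the function $\lambda_1$ coincide with the ones built from $H_1$. The continuous spectrum of $\overline{H_1}=\overline H$ still has multiplicity $m$ in a neighborhood of $\lambda_0$, so the proof of Theorem \ref{T:main} produces $m$ real-valued $C^k$ functions $F_1,\dots,F_m$ on a neighborhood $\mathcal O$ of $0\in X_s$ whose common zero set is equivalent to $\delta(\overline H+W-\lambda_1(W))\psi_1=0$, and with $F_1'(0),\dots,F_m'(0)$ linearly independent (here (H5') plays the role that (H5) played for $H$: since $\{W\psi_1;W\in X_s\}$ is dense in $\mathcal H_s$, the span of $\{\delta(\overline H-\lambda_0)W\psi_1;W\in X_s\}$ is all of $\Ran\delta(\overline H-\lambda_0)$, which is precisely (H5) for $H_1$).

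The second ingredient is the $n-1$ additional functions
\begin{equation*}
   G_i(W):=\langle\psi_i,A(\lambda_1(W),W)\psi_1\rangle,\qquad i=2,\dots,n.
\end{equation*}
These are $C^k$ near $0$ by Proposition \ref{P:differentiable} and Proposition \ref{P:A-equation}, they are real-valued (since the $\psi_i$ are real and $A(\lambda,W)=A(\lambda,W)^*$), and $G_i(0)=\langle\psi_i,A(\lambda_0,0)\psi_1\rangle=\langle\psi_i,\psi_1\rangle=0$ because $A(\lambda_0,0)$ acts as multiplication by $1$ on $\Ran P_0$ and $\psi_i\perp\psi_1$. Differentiating at $W=0$, using \eqref{E:Q-der-W} and $(\overline H-\lambda_0\pm i0)^{-1}\psi_1=\psi_1$ (since $\psi_1\in\Ran P_0$ and $\overline H\psi_1=(\lambda_0+1)\psi_1$) together with $\delta(\overline H-\lambda_0)\psi_1=0$, the chain-rule computation (identical to the one for $F_j'(0)$ in the proof of Theorem \ref{T:main}, with the $\delta$-term and the $\lambda_1'(0)$-term both dropping out) gives
\begin{equation*}
   G_i'(0)W=-\langle\psi_i,W\psi_1\rangle,\qquad i=2,\dots,n.
\end{equation*}

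Now I would assemble $\Phi:=(F_1,\dots,F_m,G_2,\dots,G_n):\mathcal O\to\mathbb R^{\nu}$, a $C^k$ map with $\Phi(0)=0$, and show $\Phi'(0)$ is surjective; then the implicit function theorem (as at the end of the proof of Theorem \ref{T:main}) gives the desired $C^k$ manifold $\mathcal M$ of codimension $\nu$, and Theorem \ref{T:l eig suff} guarantees every $W\in\mathcal M\cap\mathcal O$ has $\lambda_1(W)$ as an embedded eigenvalue of $H+W$ in $(\lambda_0-\delta,\lambda_0+\delta)$ (shrinking $\mathcal O$ and choosing $\delta$ so that $\lambda_1(\mathcal O)\subset(\lambda_0-\delta,\lambda_0+\delta)$ and the smallness hypotheses of Theorem \ref{T:l eig suff} hold). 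For surjectivity I must show the functionals $F_1'(0),\dots,F_m'(0),G_2'(0),\dots,G_n'(0)$ are linearly independent. Suppose $\sum_{j=1}^m a_jF_j'(0)+\sum_{i=2}^n b_iG_i'(0)=0$. Recalling $F_j'(0)W=-\langle g_j,\delta(\overline H-\lambda_0)W\psi_1\rangle$, and writing $g:=\sum a_jg_j\in\Ran\delta(\overline H-\lambda_0)^{*}$-pairing and $\chi:=\sum_{i=2}^n b_i\psi_i\in\Ran P_0\ominus\span\{\psi_1\}$, this reads $\langle g,\delta(\overline H-\lambda_0)W\psi_1\rangle+\langle\chi,W\psi_1\rangle=0$ for all $W\in X_s$. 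Since $\{W\psi_1;W\in X_s\}$ is dense in $\mathcal H_s$ by (H5'), this forces the bounded functional $h\mapsto\langle g,\delta(\overline H-\lambda_0)h\rangle+\langle\chi,h\rangle$ to vanish on all of $\mathcal H_s$; testing on $h\in\Ran P_0$ (so $\delta(\overline H-\lambda_0)h=0$) gives $\langle\chi,h\rangle=0$ for all such $h$, hence $\chi=0$ and all $b_i=0$; and then $\langle g,\delta(\overline H-\lambda_0)h\rangle=0$ for all $h$, and by the density argument already used in the proof of Theorem \ref{T:main} (via (H5), equivalently (H5') for $H_1$) together with the linear independence of $g_1,\dots,g_m$ we get all $a_j=0$. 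The main obstacle is precisely this last independence step — making sure the ranges relevant to the $F_j$'s and the directions picked out by the $G_i$'s do not interfere; the key point that makes it work is that the $G_i$-directions live inside $\Ran P_0$ where $\delta(\overline H-\lambda_0)$ annihilates everything, cleanly decoupling the two blocks of equations.
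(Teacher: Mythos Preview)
Your proposal is correct and follows essentially the same route as the paper's proof: both reduce \eqref{E:embeigcond2} to $m$ scalar equations via the dual vectors $g_j$, add the $n-1$ equations \eqref{E:embeigcond1}, compute the derivatives $F_j'(0)W=-\langle g_j,\delta(\overline H-\lambda_0)W\psi_1\rangle$ and $G_i'(0)W=-\langle\psi_i,W\psi_1\rangle$, and prove linear independence by using (H5') to pass from $\{W\psi_1\}$ to all of $\mathcal H_s$ and then decoupling via $\delta(\overline H-\lambda_0)|_{\Ran P_0}=0$. One small remark: your reason for $G_i$ being real (``$A=A^*$'') is not quite the right one---self-adjointness alone does not force off-diagonal matrix entries to be real; the correct reason is that $A(\lambda,W)$ commutes with $C$ (since $\overline H$ and $W$ are real), so $A\psi_1$ is real and $\langle\psi_i,A\psi_1\rangle\in\mathbb R$.
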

\begin{proof}
   The manifold $\mathcal M$ will be the set of $W\in X_s$ such that  \eqref{E:embeigcond1} and \eqref{E:embeigcond2} are
   satisfied. Let $\varphi_i$, $f_i$ and $g_i$, $i=1,\dots,m$ be the functions defined in the proof of Theorem \ref{T:main}:
   The vectors $\varphi_i\in \mathcal H_s$, $i=1,\dots, m$ are chosen so that $f_i=\delta(\overline H-\lambda_0)\varphi_i$ are real and span $\Ran(\delta(\overline H-\lambda_0))$. Then the vectors $g_i$, $i=1,\dots,m$ are chosen to be real and so that they satisfy   $\langle f_j,g_l\rangle=0$ for $j, l=1,\dots,m$. We also have to make sure that the first eigenvector $\psi_1\in \Ran P_0$ is chosen so that  (H5') is satisfied for this $\psi_1$. This means that the other basis vectors which were chosen in the beginning of this section may have to be modified so that $\psi_1,\dots,\psi_n$ form an ON-basis.

   As in the proof of Theorem \ref{T:main}, \eqref{E:embeigcond2} is equivalent to $\langle g_i,\delta(\overline H + W -
   \lambda_1(W))\psi_1\rangle = 0$, $i=1,\dots,m$ and hence
   we need to study the equations
   \begin{equation}\label{E:solvability-cond}
      \begin{aligned}
         \langle \psi_i,A(\lambda_1(W),W)\psi_1\rangle &= 0, \qquad i=2,\dots,n, \\
         \langle g_i,\delta(\overline H+W-\lambda_1(W))\psi_1\rangle &=0, \qquad  i=1,\dots,m.
      \end{aligned}
   \end{equation}
   Note that there are $\nu$ conditions to be satisfied. We write \eqref{E:solvability-cond} as
   \begin{equation*}
      F(W) = 0,
   \end{equation*}
   where $F$ maps a neighborhood of $0$ in $X_s$ to $\mathbb R^\nu$, and the components of $F$ are the left hand side of the equations
   \eqref{E:solvability-cond} in some order.

   By Proposition \ref{P:differentiable}, $F$ is a $C^k$ function of $W$, and
   a calculation shows that the components of $F'(0)$ are given by the functionals
   \begin{equation}\label{E:functionals}
      \begin{aligned}
         W \mapsto -\langle g_i,\delta(\overline H - \lambda_0)W\psi_1\rangle, \qquad & i=1,\dots, m,\\
         W \mapsto -\langle \psi_i,W\psi_1\rangle, \qquad &  i=2,\dots,n.
      \end{aligned}
   \end{equation}
   We must show that these functionals are linearly independent, and so we let
   $g:=\sum_{i=1}^m\alpha_i g_i$ and $\psi_\perp:= \sum_{i=2}^n \beta_i \psi_i$. Then
   \begin{equation*}
      \langle g,\delta(\overline H - \lambda_0)W\psi_1\rangle + \langle \psi_\perp,W\psi_1\rangle = 0
   \end{equation*}
   for every $W\in X_s$  holds  if and only if
   \begin{equation*}
      \langle \delta(\overline H - \lambda_0) g+\psi_\perp,W\psi_1\rangle = 0
   \end{equation*}
   for every $W\in X_s$. If this is true, then by (H5'),
   \begin{equation*}
      \psi_\perp+\delta(\overline H-\lambda_0)g=0.
   \end{equation*}
   But
   \begin{equation*}
      \langle \psi_\perp,\delta(\overline H-\lambda_0)g\rangle=\langle \delta(\overline H-\lambda_0)\psi_\perp,g\rangle =0
   \end{equation*}
   since $H\psi_\perp = \lambda_0 \psi_\perp$. Thus $\psi_\perp=0$ (and hence $\beta_i=0$)
   and $\delta(\overline H - \lambda_0) g =0$. But
   \begin{equation*}
      \langle g_j,\delta(\overline H-\lambda_0)\varphi_i\rangle=\delta_{ij}.
   \end{equation*}
   Thus
   \begin{equation*}
      \langle g, \delta(\overline H-\lambda_0)\varphi_i\rangle = \alpha_i = \langle\delta(\overline H-\lambda_0)g,\varphi_i\rangle=0,
   \end{equation*}
   which shows that the functionals in \eqref{E:functionals} are linearly independent.

   Finally, we make the decomposition $X_s = (\ker F'(0))\oplus \mathcal V$, where $\mathcal V$ has dimension $\nu$ and the
   map $F'(0):\mathcal V \to \mathbb R^\nu$ is a linear homeomorphism. For $W\in X_s$, we write $W = \xi +
   \eta$ where $\xi\in \ker F'(0)$ and $\nu \in \mathcal V$. By the implicit function theorem, there is a neighborhood $U$ of $0 \in \ker F'(0)$ and a $C^k$ function
   $\eta:U \rightarrow \mathcal V$,
   $\xi\mapsto\eta(\xi)$ such that $\eta(0)=0$ and $F(\xi +\eta(\xi))= 0$ for $\|\xi\|_{X_s}$ small.
\end{proof}
   \begin{Ex}
   We revisit Example \ref{Ex:cylinder} when $H:= -\Delta + V$ on $L^2(\mathbb R\times S^1)$, but this time we do not restrict to the subspace of functions which are even in the $\theta$ variable. The multiplicity of the eigenvalue $\lambda_0$ is $2$. Let $s=k+1/2$, where $k$ is a positive integer. The conditions (H1)--(H4) are verified as in Example \ref{Ex:cylinder}, except that the multiplicity of the continuous spectrum is
   now $m:= 4n-2$ since $(n-1)^2<\lambda_0<n^2$. Let $f$ be as in Example \ref{Ex:cylinder}. Following the notation of this section, we choose
   \begin{equation*}
       \begin{aligned}
          \psi_1(z,\theta) &= \frac{1}{\sqrt{\pi}\|f\|_{L^2(\mathbb R)}}f(z) \cos(n\theta), \\
          \psi_2(z,\theta) &= \frac{1}{\sqrt{\pi}\|f\|_{L^2(\mathbb R)}}f(z) \sin(n\theta).
       \end{aligned}
   \end{equation*}
   Then $\psi_1$ and $\psi_2$ are normalized eigenfunctions with eigenvalue $\lambda_0=n^2+e>0$.

   Theorem \ref{T:codim-degenerate} guarantees the existence of a $C^k$ manifold $\mathcal M$ of codimension $\nu:=m+1=4n-1$ such that if $W$ belongs to this manifold and is sufficiently small, then $H+W$ has an embedded eigenvalue close to
   $\lambda_0$.

   Note that with a different choice of $\psi_1$, we would in general get a different manifold $\mathcal M$, and that there is a $1$ parameter family of such normalized $\psi_1$. The set of small perturbations making the embedded
   eigenvalue persist is included in the union of these manifolds. The structure of the set of small perturbations making the embedded eigenvalue persist is not yet fully understood in this case.
\end{Ex}
\section{Extensions to the case of infinite multiplicity of the continuous spectrum}\label{S:infinmult}
In this section we study the example of the self-adjoint Schr\"odinger operator $H := -\Delta + V$ in the space $\mathcal H:= L^2(\mathbb R^n)$, $n\ge 2$, in which
case the continuous spectrum may have infinite multiplicity. We impose some conditions on $V$. First we assume that $V$ is a real measurable locally bounded function on $\mathbb R^n$. We denote by $\dot H$ the symmetric operator in $L^2(\mathbb R^n)$ with
$\Dom (\dot H) = C_0^\infty(\mathbb R^n)$ such that $\dot H u = -\Delta u + V u$ for $u\in C_0^\infty(\mathbb R^n)$.

We assume that $\dot H$ is essentially self-adjoint. We note that a simple sufficient condition for $\dot H$ to be
essentially self-adjoint is that $V_-(x):=\min\{V(x),0\}$ is a bounded function. (For general conditions ensuring essential self-adjointness, see \cite{tK72}.)

We denote by $H$ the self-adjoint operator which is the closure of $\dot H$ in $L^2(\mathbb R^n)$. We observe that if $\varphi_0$ is an eigenfunction of $H$, associated with the eigenvalue $\lambda_0$, then $\varphi_0$ is a continuous function. Indeed, since
\begin{equation*}
   \langle (-\Delta +V-\lambda_0)\psi,\varphi_0\rangle = \langle (H-\lambda_0)\psi,\varphi_0\rangle = \langle\psi,(H-\lambda_0)\varphi_0\rangle = 0
\end{equation*}
for all $\psi\in C_0^\infty (\mathbb R^n)\subset \Dom(H)$ and since $V\in L_{loc}^\infty(\mathbb R^n)$, it follows, using the $L^p$ regularity theory of weak solutions of elliptic equations, that the eigenfunction $\varphi_0$ belongs to the Sobolev space $W_{loc}^{2,p}(\mathbb R^n)$ for any $p$, $1<p<\infty$ (see e.g. Theorem 6.1. in \cite{sA59}). It then follows from the Sobolev embedding theorem that the eigenfunction $\varphi_0$ is continuous. (More precisely, it follows that $\varphi_0\in C^1(\mathbb R^n)$
and that the first order derivatives of $\varphi_0$ satisfy a local H\"older condition of any order $<1$.)

We consider the Schr\"odinger operator $H$ in the setup of section \ref{S:main}. We choose for the $\mathcal H_s$ spaces the weighted $L^2$ spaces on $\mathbb R^n$ with weight $(1+|x|^2)^s$ and we let the antiunitary involution  $C$ be complex conjugation. We shall show in the following that under assumptions (H2)--$\text{(H3)}_k$, embedded eigenvalues of $H$ persists for a large class of perturbations $W$.

In the following, we denote by $\lambda_0$ some fixed embedded eigenvalue of $H$ verifying assumption (H2). As usual, $P_0$ denotes the orthogonal projection on the eigenspace at $\lambda_0$. We denote by $\varphi_0(x)$ some fixed eigenfunction corresponding to $\lambda_0$. We assume that $\varphi_0$ is real and normalized. We also fix some ball $B_r(x_0):=\{x\in\mathbb R^n;\; |x-x_0|<r\}$
such that $\varphi_0(x)\ne 0$ on $\overline {B_r(x_0)}$.
We introduce the following function spaces:
\begin{equation*}
   \begin{aligned}
      K_0 &:= \{ f\in L^{\infty}(\mathbb R^n);\; f \, \text {is real and} \, f=0 \text{ a.e. in the complement of }B_r(x_0)\}, \\
      K_1&:=\{f\in C^2(\mathbb R^n);\; f \, \text {is real} \,, \,\supp f \subset \overline{B_r(x_0)}\text{ and }\langle f,\varphi\rangle = 0 \text{ for every }\varphi\in \Ran P_0\}.
   \end{aligned}
\end{equation*}
$K_0$ and $K_1$ are considered as real Banach spaces with norms
\begin{equation*}
   \|f\|_{K_0} := \|f\|_{L^\infty(\mathbb R^n)}
\end{equation*}
and
\begin{equation*}
   \|f\|_{K_1} := \sup_{x\in \mathbb R^n} \sum_{|\alpha|\le 2} \left|\left(\frac{\partial}{\partial x}\right)^\alpha f(x)\right|.
\end{equation*}
\begin{Thm}
   Let $H:= -\Delta + V$ and let $\lambda_0$ and $\varphi_0$ be as above. Assume that (H2)--$(H3)_k$ are satisfied.
   Then there exist positive numbers $\delta$, $\eta$, a $C^k$ injective map
   \begin{equation*}
      g:\{u\in K_1;\; \|u\|_{K_1}<\delta\}\to K_{0,\eta}:=\{W\in K_0;\; \|W\|_{K_0}<\eta\},
   \end{equation*}
   and a $C^k$ map
   \begin{equation}\label{E:lambda-def}
      \lambda:K_{0,\eta}\to \mathbb R
   \end{equation}
    such that
   $g(0)=0$, $\lambda(0)=\lambda_0$ and if $W=g(u)$ and $\|u\|_{K_1}<\delta$ then $\lambda(W)$ is an eigenvalue of $H+W$.
\end{Thm}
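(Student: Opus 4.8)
\medskip
\noindent\textbf{Proof plan.} The plan is to build the eigenfunction by hand inside $B_r(x_0)$, where $\varphi_0$ does not vanish, and then read off the perturbation. Fix $u\in K_1$ with $\|u\|_{K_1}$ small and set $\psi:=\varphi_0+u$. Since $u$ is real and supported in $\overline{B_r(x_0)}$, $\psi$ is a real $L^2$ function agreeing with $\varphi_0$ off $\overline{B_r(x_0)}$, and since $\varphi_0$ is continuous and nonvanishing on the compact set $\overline{B_r(x_0)}$ we have $|\psi|\ge c/2$ there once $\|u\|_{K_1}<c/2$, with $c:=\min_{\overline{B_r(x_0)}}|\varphi_0|>0$. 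Inside $B_r(x_0)$ I would then simply \emph{solve} $(-\Delta+V+W-\lambda_0)\psi=0$ for $W$, which is legitimate because $\psi\ne 0$ there; using $(-\Delta+V-\lambda_0)\varphi_0=0$ this forces
\[
   W=g(u):=\frac{\Delta u+(\lambda_0-V)u}{\varphi_0+u}\ \text{ on }B_r(x_0),\qquad W:=0\ \text{ off }B_r(x_0).
\]
Because $u\in C^2$ is supported in $\overline{B_r(x_0)}$ and $V\in L^\infty_{loc}$, the numerator is a bounded function supported in $\overline{B_r(x_0)}$, so $W$ is real, bounded, supported in $\overline{B_r(x_0)}$, and $\|W\|_{K_0}\le C\|u\|_{K_1}$; hence for $\delta$ small enough $g$ maps $\{\|u\|_{K_1}<\delta\}$ into $K_{0,\eta}$, with $g(0)=0$. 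Viewing $g(u)$ as the extension by zero of the product of the bounded linear map $u\mapsto\Delta u+(\lambda_0-V)u$ into $L^\infty(B_r(x_0))$ with $(\varphi_0+u)^{-1}$, and using that inversion is $C^\infty$ near the invertible element $\varphi_0$ of the Banach algebra $L^\infty(B_r(x_0))$, shows $g\in C^\infty$, in particular $g\in C^k$.

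Next I would verify that $\psi$ is genuinely an eigenfunction. First $\psi\in\Dom(H)$: $\varphi_0\in\Dom(H)$, and $u\in W^{2,2}$ with compact support lies in $\Dom(H)$ with $Hu=-\Delta u+Vu$ (mollify $u$ and use the local bound on $V$ to pass to the limit along the graph of $\dot H$). Since $W\in L^\infty$, $\Dom(H+W)=\Dom(H)$, and by the very choice of $W$,
\[
   (H+W)\psi=\lambda_0\varphi_0+(-\Delta u+Vu)+W(\varphi_0+u)=\lambda_0\varphi_0+(-\Delta u+Vu)+\bigl(\Delta u+(\lambda_0-V)u\bigr)=\lambda_0\psi,
\]
and $\psi\ne 0$, so $\lambda_0\in\sigma_{pp}(H+W)$. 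For the map $\lambda$ one may simply take $\lambda\equiv\lambda_0$; alternatively, using $\text{(H3)}_k$ and the argument of Proposition~\ref{P:A-equation} with the vector $\psi_1:=\varphi_0$ (as in Section~\ref{S:degenerate}) one obtains a $C^k$ tracking function $\lambda:K_{0,\eta}\to\mathbb R$ with $\langle\varphi_0,A(\lambda(W),W)\varphi_0\rangle=1$ and $\lambda(0)=\lambda_0$, and then the identity $P_0\psi=\varphi_0$ — which is exactly where the orthogonality $u\perp\Ran P_0$ built into $K_1$ is used — combined with Propositions~\ref{P:ev}--\ref{P:eigenvector} (applied to $H$ when $\lambda_0$ is simple, and to $H_1:=\overline H-P_1$ of Section~\ref{S:degenerate} otherwise) forces $\lambda(g(u))=\lambda_0$; either way $\lambda(g(u))$ is an eigenvalue of $H+g(u)$.

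Finally, injectivity of $g$: if $g(u_1)=g(u_2)=W$ then $\varphi_0+u_1$ and $\varphi_0+u_2$ are both eigenfunctions of $H+W$ at $\lambda_0$, so $u_1-u_2$ is either $0$ or an eigenfunction of $-\Delta+(V+W)$ at $\lambda_0$ with support in $\overline{B_r(x_0)}$; since $V+W\in L^\infty_{loc}$, the unique continuation principle for Schr\"odinger operators excludes a nonzero compactly supported eigenfunction, whence $u_1=u_2$. I expect the two genuinely delicate points to be (i) the domain and regularity bookkeeping — that $\psi\in\Dom(H)$ and that the eigenvalue identity above holds as an $L^2$ identity, given that $V$ is only locally bounded and $\varphi_0$ only locally in $W^{2,p}$ — and (ii) invoking the correct unique continuation theorem for potentials that are merely locally bounded in the injectivity step. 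Everything else, including the elementary fact that $K_0$ embeds continuously into the space $X_s$ of $\text{(H3)}_k$ (multiplication by a bounded function supported in a fixed ball maps $\mathcal H_{-s}\to\mathcal H_s$), is routine.
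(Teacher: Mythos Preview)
Your argument is correct and takes a genuinely different, more elementary route than the paper.

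The paper defines $\lambda(W)$ via the $A$-equation $\langle\varphi_0,A(\lambda,W)\varphi_0\rangle=1$, then applies the implicit function theorem to $F(u,W)=W\varphi_0-(H+W-\lambda(W))u$ to obtain $g$, and finally verifies that $\lambda(g(u))$ is an eigenvalue \emph{indirectly}, by a resolvent computation showing $(\overline H+W-\lambda(W)\pm i0)^{-1}\varphi_0=(1+\lambda_0-\lambda(W))^{-1}(\varphi_0-u)$ and then invoking the $Q$-function criteria of Sections~\ref{S:prel}--\ref{S:Q} and Theorem~\ref{T:l eig suff}. Injectivity is obtained from the absence of eigenvalues of $\overline H+W$ near $\lambda_0$, which rests on $\text{(H3)}_0$.

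You instead write down $g$ in closed form, take $\lambda\equiv\lambda_0$, and check by a one-line computation that $\varphi_0+u$ is an eigenfunction of $H+g(u)$ at $\lambda_0$; injectivity comes from unique continuation for $-\Delta+(V+W)$ with $V+W\in L^\infty_{loc}$. This bypasses the entire limiting-absorption machinery: your proof never actually uses $\text{(H3)}_k$, so it shows the theorem holds under weaker hypotheses than stated, and $g$ is visibly $C^\infty$ rather than merely $C^k$. The paper's approach, on the other hand, stays within the $Q$-function framework built up in Sections~\ref{S:prel}--\ref{S:degenerate} and produces the same $\lambda(\cdot)$ that governs the manifold results elsewhere in the paper --- although, as your ``alternatively'' remark correctly anticipates, that $\lambda$ is forced to equal $\lambda_0$ on the image of $g$. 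Your two flagged delicate points (that $C^2_0\subset\Dom(H)$ with $Hu=-\Delta u+Vu$, and unique continuation for locally bounded potentials) are both standard; the first is used verbatim in the paper, and the second is classical.
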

\begin{proof}
   We note that the analysis and results of Sections \ref{S:prel}, \ref{S:Q} and \ref{S:degenerate} (except for Theorem \ref{T:codim-degenerate}) are valid also when the continuous spectrum of $H$ has infinite multiplicity in a
   neighborhood of $\lambda_0$. This leads us to define the function $\lambda=\lambda(W)$ (the map \eqref{E:lambda-def}) to be the solution $\lambda=\lambda(W)$ of the equation
   \begin{equation}\label{E:lambda-eq}
      \langle \varphi_0,A(\lambda,W)\varphi_0\rangle = 1, \qquad \lambda(0)=\lambda_0
   \end{equation}
   for $W\in K_{0,\eta}$ for some $\eta>0$.
   \begin{Rem} Here and in the following, $\eta$ denotes a generic small positive number which may change throughout the proof. All statements involving $\eta$ hold under the assumption that $\eta$ is chosen sufficiently small.
   \end{Rem}
   Now, if $\lambda$ is a simple eigenvalue, then it follows from Proposition \ref{P:A-equation} that there exists a unique solution $\lambda=\lambda(W)$ of \eqref{E:lambda-eq} for all $W\in K_{0,\eta}$, where $\eta$ is sufficiently small, such that $\lambda(W)$ is of class $C^k$ in $K_{0,\eta}$. Moreover, by the proof of Proposition \ref{P:A-equation}, the same result holds if $\lambda_0$ is a degenerate eigenvalue.

   Next, let $F:K_1\times K_{0,\eta} \to K_0$ be defined by
   \begin{equation*}
      F(u,W) := W \varphi_0 - (H + W - \lambda(W)) u.
   \end{equation*}
   Note that $F$ is well defined since $C_0^2(\mathbb R^n)\subset \Dom(H)$ and $\varphi_0$ is continuous. Using that $\lambda\in C^k$, it follows that
   $F\in C^k(K_1\times K_{0,\eta})$. A short calculation shows that
   \begin{equation*}
      F'_W (0,0) \widetilde W = \widetilde W \varphi_0.
   \end{equation*}
   From the definition of $K_0$, it follows that the map $F'_W(0,0):K_0\to K_0$ is invertible. Hence by the implicit function theorem,
   there exist $\delta>0$ and a $C^k$ map $g:\{u\in K_1;\; \|u\|_{K_1}<\delta\}\to K_{0,\eta}$ such that $F(u,W)=0$ for $\|u\|_{K_1}<\delta$ and $W\in K_{0,\eta}$ if and only if $W=g(u)$.

   We claim that $\lambda(W)$ with $W=g(u)$, $\|u\|_{K_1}<\delta$ is an eigenvalue of $H+W$. For the claim to hold, we need to show
   in the case that $\lambda_0$ is a simple eigenvalue, that the two equations \eqref{E:deltaA-equation} with $f=\varphi_0$ and $W=g(u)$ hold. Now the second equation \eqref{E:deltaA-equation} in our case coincides with equation of \eqref{E:lambda-eq}. Thus, to prove the result for a simple eigenvalue $\lambda_0$, we only need to verify that the first equation of \eqref{E:deltaA-equation} holds; i.e. we need to show that
   \begin{equation}\label{E:delta-eq}
      \delta(\overline H+W-\lambda(W))\varphi_0 = 0
   \end{equation}
   for $W=g(u)$, $\|u\|_{K_1}<\delta$.

   To prove the claim when $\lambda_0$ is a degenerate eigenvalue, we observe that it follows from Proposition \ref{P:H_1-eigenvalue}
   and Theorem \ref{T:l eig suff} (with $\psi_1$ replaced by $\varphi_0$  and $\lambda_1(W)$ replaced by $\lambda(W)$) that $\lambda(W)$ with $W=g(u)$, $\|u\|_{K_1}<\delta$, is an eigenvalue of $H+W$ if \eqref{E:delta-eq} holds and in addition:
   \begin{equation}\label{E:A_1-eq}
      \langle \varphi,A(\lambda(W),W)\varphi_0\rangle = 0
   \end{equation}
   for all $\varphi\in \Ran P_0$ such that $\langle \varphi,\varphi_0\rangle =0$.

   We proceed to show that \eqref{E:delta-eq} and \eqref{E:A_1-eq} hold, thus proving our claim. To this end, we consider the functions
   $(\overline H + W - \lambda(W)\pm i0)^{-1}\varphi_0$ for $W=g(u)$, $\|u\|_{K_1}<\delta$ ($\delta$ small as above). Using the second resolvent equation, we find that
   \begin{equation}\label{E:resolvent-eq-mult}
      \begin{aligned}
         (\overline H &+ W - \lambda(W)\pm i0)^{-1}\varphi_0 \\
         &= (\overline H -\lambda(W)\pm i0)^{-1}\varphi_0 - (\overline H + W - \lambda(W)\pm i0)^{-1}W (\overline H - \lambda(W)\pm i0)^{-1}\varphi_0 \\
         &=(1+\lambda_0-\lambda(W))^{-1}\varphi_0 -(1+\lambda_0 -\lambda(W))^{-1}(\overline H + W - \lambda(W)\pm i0)^{-1}W\varphi_0 \\
         &= (1+\lambda_0-\lambda(W))^{-1}\varphi_0 - (1+\lambda_0-\lambda(W))^{-1}(\overline H + W - \lambda(W)\pm i0)^{-1}(\overline H + W - \lambda(W))u \\
         &= (1+\lambda_0-\lambda(W))^{-1}(\varphi_0 - u),
      \end{aligned}
   \end{equation}
   where the third equality of \eqref{E:resolvent-eq-mult} follows since $W\varphi_0 = (\overline H + W -\lambda(W))u$ for $W=g(u)$, which
   follows from the definition of $g$ since $P_0 u =0$. The last equality in \eqref{E:resolvent-eq-mult} follows since $u$ has compact support, and thus $u\in\mathcal H_s$ for all $s$.

   Now it follows from \eqref{E:resolvent-eq-mult} that $(\overline H + W -\lambda(W)+i0)^{-1}\varphi_0 = (\overline H + W - \lambda(W) - i0)^{-1} \varphi_0$ for $W=g(u)$, where $\|u\|_{K_1}<\delta$ for $\delta$ sufficiently small, which implies (by the definition of $\delta(\cdot)$) that \eqref{E:delta-eq} holds. Also, let $\varphi\in \Ran P_0$ and assume that $\langle \varphi,\varphi_0\rangle=0$. Using the definition of $A(\cdot,\cdot)$ and \eqref{E:resolvent-eq-mult} we find that
   \begin{equation*}
      \begin{aligned}
         \langle \varphi,A(\lambda(W),W)\varphi_0\rangle &= \frac{1}{2}\langle \varphi,\left((\overline H + W - \lambda(W)-i0)^{-1} +
         (\overline H + W - \lambda(W)+ i0)^{-1})^{-1}\right)\varphi_0\rangle \\
         &=(1+\lambda_0-\lambda(W))^{-1}\langle\varphi,\varphi_0-u\rangle = 0,
      \end{aligned}
   \end{equation*}
   since $\langle \varphi,\varphi_0\rangle = 0$ by assumption and $\langle \varphi,u\rangle =0$ by the definition of $K_1$. This completes the proof that $\lambda(W)$ is an eigenvalue of $H+W$ for $W=g(u)$, $\|u\|_{K_1}<\delta$, $\delta$ sufficiently small.

   It remains to check that the map $g:\{u\in K_1;\; \|u\|_{K_1}<\delta\}\to K_0$ is injective. If $g(u_1) = g(u_2) = W$ then
   \begin{equation*}
      0 = F(u_2,W)-F(u_1,W) = (H+W-\lambda(W))(u_1-u_2),
   \end{equation*}
   i.e. $u_1 - u_2$ is an eigenfunction of $H + W$ with eigenvalue $\lambda(W)$. But since $P_0 u_1 = P_0 u_2=0$, it follows that
   $(H+W-\lambda(W))(u_1-u_2) = (\overline H+W-\lambda(W))(u_1-u_2)$, and so $u_1 - u_2$ is also an eigenfunction of $\overline H + W$.
   Since $\overline H + W$ has no eigenvalues in a neighborhood of $\lambda_0$, the only possibility is that
   $u_1-u_2 = 0$. Hence $g$ is injective.
\end{proof}

\section{Acknowledgment}

I.H. would like to thank Tom Kriete for a useful conversation.

\end{document}